\numberwithin{equation}{section} 
\numberwithin{figure}{section} 
\theoremstyle{plain}
\theoremstyle{plain}
\newtheorem{thm}{Theorem}
  \theoremstyle{plain}
  \newtheorem{lem}[thm]{Lemma}
  \theoremstyle{plain}
  \newtheorem{cor}[thm]{Corollary}
\numberwithin{thm}{section}
\subjclass{Primary: 14N15; Secondary: 15A42, 46L10, 46L54, 52B05, 05E99.}
\thanks{HB and WSL were supported in part by grants from the National Science Foundation.}
\address{HB: Department of Mathematics, Indiana University, Bloomington, IN 47405, USA}
\email{bercovic@indiana.edu}
\address{WSL: School of Mathematics, Georgia Institute of Technology, Atlanta, GA 30332-1060, USA}
\email{li@math.gatech.edu}
\address{DT: Simion Stoilow Institute of Mathematics of the Romanian Academy, PO Box 1-764, Bucharest 014700, Romania}
\email{Dan.Timotin@imar.ro}
\begin{document}

\title{A family of reductions for Schubert intersection problems}

\author{H. Bercovici, W. S. Li and D. Timotin}

\maketitle
\markboth{}{}
\begin{abstract}
We produce a family of reductions for Schubert intersection problems
whose applicability is checked by calculating a linear combination
of the dimensions involved. These reductions do not alter the Littlewood-Richardson
coefficient, and they lead to an explicit solution of the intersection
problem when this coefficient is 1.
\end{abstract}

\section{Introduction}

Given integers $n>r\ge1$, we denote by $G(r,\mathbb{C}^{n})$ the
Grassmannian manifold consisting of all $r$-dimensional subspaces
in $\mathbb{C}^{n}$. For every flag\[
\mathcal{E}=\{\{0\}=\mathbb{E}_{0}\subset\mathbb{E}_{1}\subset\mathbb{E}_{2}\subset\cdots\subset\mathbb{E}_{n}=\mathbb{C}^{n}\},\]
where $\mathbb{E}_{j}$ is a subspace of dimension $j$, $G(r,\mathbb{C}^{n})$
can be written as a union of Schubert varieties described as follows.
For each set $I=\{i_{1}<i_{2}<\cdots<i_{r}\}\subset\{1,2,\dots,n\}$
one defines the Schubert variety\[
\mathfrak{S}(\mathcal{E},I)=\{\mathbb{M}\in G(r,\mathbb{C}^{n}):\dim(\mathbb{M}\cap\mathbb{E}_{i_{x}})\ge x,x=1,2,\dots,r\}.\]
Schubert calculus allows one to find the number of points in the intersection
of several Schubert varieties $\mathfrak{S}(\mathcal{E}_{\ell},I_{\ell}),\ell=1,2,\dots,p$,
when the flags $(\mathcal{E}_{\ell})_{\ell=1}^{p}$ are in \emph{generic
}position. We will be mostly concerned with the case $p=3$ where
the classical Littlewood-Richardson rule applies (cf. \cite{fulton}).
Thus, given sets $I,J,K\subset\{1,2,\dots,n\}$ of cardinality $r$
such that\[
\sum_{\ell=1}^{r}(i_{\ell}+j_{\ell}+k_{\ell}-3\ell)=2r(n-r),\]
the Littlewood-Richardson rule (which will be reviewed below) provides
a non-negative integer $c_{IJK}$ with the property that the set\[
S=\mathfrak{S}(\mathcal{E},I)\cap\mathfrak{S}(\mathcal{F},J)\cap\mathfrak{S}(\mathcal{G},K)\]
contains $c_{IJK}$ elements for generic flags $\mathcal{E},\mathcal{F},\mathcal{G}$.
For nongeneric flags, this intersection is still certain to be nonempty
if $c_{IJK}>0$.

Thompson and Therianos \cite{th-th} pointed out that under certain
circumstances one can reduce the problem of finding elements in the
set $S$ to a problem where $n$ is replaced by a smaller number.
In order to explain their reductions, it will be convenient to set
$I=\{i_{1}<i_{2}<\cdots<i_{r}\}$, define $i_{0}=0$, and similarly
$j_{0}=k_{0}=0$. Assume that the indices $x,y,z\in\{0,1,2,\dots,r\}$
are such that $x+y+z=r$ and $i_{x}+j_{y}+k_{z}=n-p<n.$ In this case,
the spaces $\mathbb{E}_{i_{x}},\mathbb{F}_{j_{y}},\mathbb{G}_{k_{z}}$
are generically independent, and for any space $\mathbb{M}\in S$
we have\[
r=\dim(\mathbb{M})\ge\dim(\mathbb{M}\cap\mathbb{E}_{i_{x}})+\dim(\mathbb{M}\cap\mathbb{F}_{j_{y}})+\dim(\mathbb{M}\cap\mathbb{G}_{k_{z}})\ge x+y+z=r.\]
Therefore $\mathbb{M}$ is contained in $\mathbb{E}_{i_{x}}+\mathbb{F}_{j_{y}}+\mathbb{G}_{k_{z}}$.
Replace now $\mathbb{C}^{n}$ by the space $\mathbb{X}=\mathbb{E}_{i_{x}}+\mathbb{F}_{j_{y}}+\mathbb{G}_{k_{z}}$
and the spaces $\mathbb{E}_{i},\mathbb{F}_{j},\mathbb{G}_{k}$ by
their intersections with $\mathbb{X}$. Observe that generically\[
\dim(\mathbb{E}_{i}\cap\mathbb{X})=\begin{cases}
i & \text{if }i\le i_{x}\\
i_{x} & \text{if }i_{x}<i\le i_{x}+p\\
i-p & \text{if }i_{x}+p<i\le n,\end{cases}\]
and these spaces will form (after the repeating spaces of dimension
$i_{x}$ are deleted) a flag $\mathcal{E}'$ in $\mathbb{X}$. Flags
$\mathcal{F}'$ and $\mathcal{G}'$ are defined similarly. Finding
the spaces in $S$ amounts to finding the spaces in \[
S'=\mathfrak{S}(\mathcal{E}',I')\cap\mathfrak{S}(\mathcal{F}',J')\cap\mathfrak{S}(\mathcal{G}',K')\subset G(r,\mathbb{X}),\]
where\[
i'_{\ell}=\begin{cases}
i_{\ell} & \text{if }1\le\ell\le x\\
i_{\ell}-p & \text{if }x<\ell\le r,\end{cases}\]
with similar definitions for $J'$ and $K'$. (The sequence $(i'_{\ell})_{\ell=1}^{r}$
is still strictly increasing because the condition $i_{x}+j_{y}+k_{z}=n-p$
actually implies that $i_{x+1}>i_{x}+p$.) The question arises naturally
whether $c_{I'J'K'}\ne0$ if $c_{IJK}\ne0$, so that the reduced problem
is still guaranteed to have a solution. That this is indeed the case
was shown by Collins and Dykema \cite{CoDy-reduction} who proved
that in fact $c_{I'J'K'}=c_{IJK}$.

The purpose of this paper is to identify a much larger family of reductions
associated with various inequalities satisfied by $I,J,K$. This family
is sufficient for the complete solution of the intersection problem
when $c_{IJK}=1$. The simplest of these new reductions is as follows.
Assume that $x,y,z\in\{1,2,\dots,r\}$ satisfy $x+y+z=2r$ and\[
i_{x}+j_{y}+k_{z}=2n-p<2n.\]
In this case the space\[
\mathbb{X}=(\mathbb{E}_{i_{x}}\cap\mathbb{F}_{j_{y}})+(\mathbb{E}_{i_{x}}\cap\mathbb{G}_{k_{z}})+(\mathbb{F}_{j_{y}}\cap\mathbb{G}_{k_{z}})\]
has generically codimension $2p$ and it contains all the spaces in
$S$. The reduced problem in $G(r,\mathbb{X})$ corresponds with the
sets $I',J',K'$ defined by\[
i'_{\ell}=\begin{cases}
i_{\ell}-p & \text{if }1\le\ell\le i_{x}\\
i_{\ell}-2p & \text{if }i_{x}<\ell\le r,\end{cases}\]
with analogous definitions for $j'_{\ell},k'_{\ell}$. As in the result
of \cite{CoDy-reduction} just mentioned, we have $c_{I'J'K'}=c_{IJK}$.
The general reduction we propose can be described as follows. We are
given $r$-tuples $a=(a_{\ell})_{\ell=1}^{r},b=(b_{\ell})_{\ell=1}^{r},c=(c_{\ell})_{\ell=1}^{r}$
of nonegative integers such that\[
\sum_{\ell=1}^{r}(\ell a_{\ell}+\ell b_{\ell}+\ell c_{\ell})=\omega r\]
for some positive integer $\omega$; $a,b,c$ are subject to other
conditions which will be discussed later. Assume that the sets $I,J,K\subset\{1,2,\dots,n\}$
have cardinality $r$, $c_{IJK}>0$, and consider the sum\begin{equation}
\sum_{\ell=1}^{r}(a_{\ell}i_{\ell}+b_{\ell}j_{\ell}+c_{\ell}k_{\ell})=\omega n-p,\label{eq:reduction-witness}\end{equation}
where $p$ is some integer. The reduction corresponding to $a,b,c$
can be applied when $p<0$. Namely, if $p<0$, we necessarily have
$\omega p\le n$. Moreover, there exist
\begin{enumerate}
\item a space $\mathbb{X}\subset\mathbb{C}^{n}$ with $\dim\mathbb{X}=n-\omega p$,
\item flags $\mathcal{E}',\mathcal{F}',\mathcal{G}'$ in $\mathbb{X}$,
\item sets $I',J',K'\subset\{1,2,\dots,n-\omega p\}$ of cardinality $r$
such that $c_{I'J'K'}=c_{IJK}$ and\[
\mathfrak{S}(\mathcal{E}',I')\cap\mathfrak{S}(\mathcal{F}',J')\cap\mathfrak{S}(\mathcal{G}',K')\subset\mathfrak{S}(\mathcal{E},I)\cap\mathfrak{S}(\mathcal{F},J)\cap\mathfrak{S}(\mathcal{G},K).\]

\end{enumerate}
In addition, the space $\mathbb{X}$ can be constructed (when the
flags $\mathcal{E},\mathcal{F},\mathcal{G}$ are in `general position')
explicitly from $\mathcal{E},\mathcal{F},\mathcal{G}$ by applying
a finite number of sums and intersections. The sequences $a,b,c$
which appear here are themselves related to the Littlewood-Richardson
rule.

The two reductions discussed above are such that the only nonzero
components of $a,b,c$ are $a_{x}=b_{y}=c_{k}=1$, and $\omega=1$
or $\omega=2$.

Our proofs deepen some of the results in \cite{bcdlt}. Even though
we review the relevant results of \cite{bcdlt}, familiarity with
that paper would be helpful in reading this one.

The remainder of the paper is organized as follows. In Section 2 we
describe the formulation of the Littlewood-Richardson rule in terms
of measures. This is essentially the puzzle formulation of \cite{KTW},
and was also used in \cite{bcdlt}. We also introduce the linear combinations
of dimensions which serve as witnesses for the possibility of reductions.
In Section 3 we discuss a special class of measures, the tree measures.
It was implicit in the results of \cite{bcdlt} that rigid extremal
measures have an underlying tree structure, and this is made explicit
here. Section 4 reviews the construction of a puzzle from a measure,
and uses the results of Section 3 to deduce the identity $c_{I'J'K'}=c_{IJK}$.
In Section 5 we prove the essential technical result needed to show
in Section 6 that the analogues of the reductions of \cite{th-th}
can indeed be performed. It seems practically impossible to describe
all rigid tree measures in a uniform manner. We provide in Section
7 a description of a fairly large class of such measures.

\section{The Littlewood-Richardson Rule}

We will give the description of the Littlewood-Richardson rule in
terms of measures. This is equivalent with the puzzle description
of \cite{KTW}. Choose unit vectors $u,v,w$ in the plane such that
$u+v+w=0$.

\begin{figure}[htbp]
\begin{center} 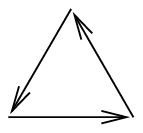  

\end{center}
\end{figure} 

\noindent The points $iu+jv$ with integer $i,j$ will be called \emph{lattice
points}, and a segment joining two nearest lattice points will be
called a \emph{small edge}. We consider positive measures $m$ which
are supported by the union of the small edges, whose restriction to
each small edge is a multiple of arclength measure, and which satisfy
the balance condition (called zero tension in \cite{KTW})\begin{equation}
m(AB)-m(AB')=m(AC)-m(AC')=m(AD)-m(AD')\label{eq:balance}\end{equation}
whenever $A$ is a lattice point and the neighboring lattice points
$B,C',D,B',C,D'$ are in cyclic order around $A$.

\begin{figure}[htbp]
\begin{center} 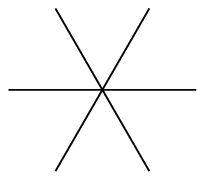  

\end{center}
\end{figure}If $e$ is a small edge, the value $m(e)$ is equal to the density
of $m$ relative to arclength measure on that edge.

Fix now an integer $r\ge1$, and denote by $\triangle_{r}$ the (closed)
triangle with vertices $0,ru,$ and $ru+rv=-rw$. We will use the
notation $A_{j}=ju,B_{j}=ru+jv$, and $C_{j}=(r-j)w$ for the lattice
points on the boundary of $\triangle_{r}$. We also set \[
X_{j}=A_{j}+w,Y_{j}=B_{j}+u,Z_{j}=C_{j}+v\]
for $j=0,1,2,\dots,r+1$. The following picture represents $\triangle_{5}$
and the points just defined; the labels are placed on the left.
\begin{center}

\begin{picture}(0,0)%
\includegraphics{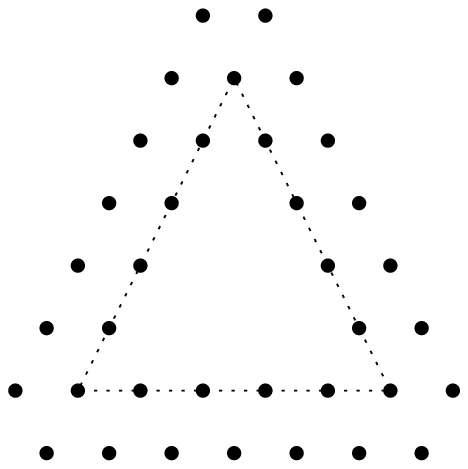}%
\end{picture}%
\setlength{\unitlength}{3947sp}%
\begingroup\makeatletter\ifx\SetFigFont\undefined%
\gdef\SetFigFont#1#2#3#4#5{%
  \reset@font\fontsize{#1}{#2pt}%
  \fontfamily{#3}\fontseries{#4}\fontshape{#5}%
  \selectfont}%
\fi\endgroup%
\begin{picture}(2303,2328)(2536,-2305)
\put(4251,-1336){\makebox(0,0)[lb]{\smash{{\SetFigFont{10}{10}{\familydefault}{\mddefault}{\updefault}{$Z_2$}%
}}}}
\put(4401,-1636){\makebox(0,0)[lb]{\smash{{\SetFigFont{10}{10}{\familydefault}{\mddefault}{\updefault}{$Z_1$}%
}}}}
\put(4551,-1936){\makebox(0,0)[lb]{\smash{{\SetFigFont{10}{10}{\familydefault}{\mddefault}{\updefault}{$Z_0$}%
}}}}
\put(4251,-1986){\makebox(0,0)[lb]{\smash{{\SetFigFont{10}{10}{\familydefault}{\mddefault}{\updefault}{$C_0$}%
}}}}
\put(3051,-1986){\makebox(0,0)[lb]{\smash{{\SetFigFont{10}{10}{\familydefault}{\mddefault}{\updefault}{$B_1$}%
}}}}
\put(2751,-1986){\makebox(0,0)[lb]{\smash{{\SetFigFont{10}{10}{\familydefault}{\mddefault}{\updefault}{$B_0$}%
}}}}
\put(3201,-2236){\makebox(0,0)[lb]{\smash{{\SetFigFont{10}{10}{\familydefault}{\mddefault}{\updefault}{$Y_2$}%
}}}}
\put(2901,-2236){\makebox(0,0)[lb]{\smash{{\SetFigFont{10}{10}{\familydefault}{\mddefault}{\updefault}{$Y_1$}%
}}}}
\put(2601,-2236){\makebox(0,0)[lb]{\smash{{\SetFigFont{10}{10}{\familydefault}{\mddefault}{\updefault}{$Y_0$}%
}}}}
\put(3501,-436){\makebox(0,0)[lb]{\smash{{\SetFigFont{10}{10}{\familydefault}{\mddefault}{\updefault}{$A_0$}%
}}}}
\put(3351,-136){\makebox(0,0)[lb]{\smash{{\SetFigFont{10}{10}{\familydefault}{\mddefault}{\updefault}{$X_0$}%
}}}}
\put(3201,-436){\makebox(0,0)[lb]{\smash{{\SetFigFont{10}{10}{\familydefault}{\mddefault}{\updefault}{$X_1$}%
}}}}
\put(4101,-1636){\makebox(0,0)[lb]{\smash{{\SetFigFont{10}{10}{\familydefault}{\mddefault}{\updefault}{$C_1$}%
}}}}
\put(3351,-736){\makebox(0,0)[lb]{\smash{{\SetFigFont{10}{10}{\familydefault}{\mddefault}{\updefault}{$A_1$}%
}}}}
\put(3051,-736){\makebox(0,0)[lb]{\smash{{\SetFigFont{10}{10}{\familydefault}{\mddefault}{\updefault}{$X_2$}%
}}}}
\end{picture}%

\end{center}
to at least three edges in the support of $m$. We will only consider
measures with at least one branch point. This excludes measures whose
support consists of one or more parallel lines. We  denote by $\mathcal{M}_{r}$
the collection of all measures $m$ satisfying the balance condition
above, whose branch points are contained in $\triangle_{r}$, and
such that\[
m(A_{j}X_{j+1})=m(B_{j}Y_{j+1})=m(C_{j}Z_{j+1})=0,\quad j=0,1,\dots,r.\]
The numbers $\alpha_{j}=m(A_{j}X_{j})$, $\beta_{j}=m(B_{j}Y_{j})$
and $\gamma_{j}=m(C_{j}Z_{j})$ will be called the \emph{exit densities
}of $m$.\emph{ }The \emph{weight} $\omega(m)$ of a measure $m\in\mathcal{M}_{r}$
is defined as\[
\omega(m)=\sum_{j=0}^{r}\alpha_{j}=\sum_{j=0}^{r}\beta_{j}=\sum_{j=0}^{r}\gamma_{j};\]
the equality of the three sums follows from the balance condition. 

Assume that $m\in\mathcal{M}_{r}$ assigns integer densities to all
small edges. We can then define an integer \[
n=r+\omega(m),\]
and sets $I,J,K\subset\{1,2,\dots,n\}$ of cardinality $r$ by setting
$I=\{i_{1},i_{2},\dots,i_{r}\}$, where \begin{equation}
i_{\ell}=\ell+\sum_{j=0}^{\ell-1}\alpha_{j},\quad\ell=1,2,\dots,r,\label{eq:15}\end{equation}
with similar formulas for $J$ and $K$. These are precisely the triples
of sets $(I,J,K)$ which satisfy the Littlewood-Richardson rule. The
\emph{Littlewood-Richardson coefficient} $c_{IJK}$ equals the number
of measures $m\in\mathcal{M}_{r}$ with integer densities which satisfy
(\ref{eq:15}). (See \cite{KTW}, or \cite[Appendix]{buch} for a
direct proof of this fact.) We will also write $c_{m}=c_{IJK}$ when
$I,J,K$ are obtained from $m$. When $c_{m}=1$, we will say that
$m$ is \emph{rigid.} In other words, $m$ is rigid if there is no
other measure with the same exit densities. Note that knowledge of
$n$ and of the sets $I,J,K$ determines entirely the numbers $\alpha_{j},\beta_{j},\gamma_{j}$.
The Littlewood-Richardson rule requires these numbers to be the actual
exit densities of some measure.

One of the advantages of this formulation of the Littlewood-Richardson
rule is that it displays an underlying convexity structure. Thus,
the set $\mathcal{M}_{r}$ is a convex polyhedral cone, and therefore
each measure $0\ne m\in\mathcal{M}_{r}$ can be written as a sum of
\emph{extremal} measures. Recall that $m\ne0$ is extremal if every
measure $m'\le m$ is a multiple of $m$. This decomposition into
extremal summands is unique (except for the order of the terms) if
$m$ is a rigid measure (see \cite[Corollary 3.6]{bcdlt}). In the
proof of Theorem \ref{thm:rigid-skeletons-are-trees}, we will describe
briefly the result of \cite{bcdlt} showing how the extremal summands
of a rigid measure are obtained.

The results of \cite{KTW} imply that, given a measure $m\in\mathcal{M}_{r}$
with exit densities $\alpha_{j},\beta_{j},\gamma_{j}$, there exist
Hermitian $r\times r$ matrices $X,Y,Z$ such that $X+Y+Z=2\omega(m)1_{r}$,
and the eigenvalues of $X,Y,Z$ are, respectively, the numbers\[
\sum_{j=0}^{\ell-1}\alpha_{j},\quad\sum_{j=0}^{\ell-1}\beta_{j},\quad\sum_{j=0}^{\ell-1}\gamma_{j},\quad\ell=1,2,\dots,r;\]
here $1_{r}$ denotes the $r\times r$ identity matrix. The sum of
the traces of $X,Y,Z$ must then be $2r\omega(m)$, and this can be
written in the equivalent form\[
\sum_{\ell=0}^{r}\ell(\alpha_{\ell}+\beta_{\ell}+\gamma_{\ell})=r\omega(m).\]

As seen in the introduction, the possibility of reductions for the
Schubert intersection problem defined by the sets $I,J,K\subset\{1,2,\dots,n\}$
is tested by calculating an appropriate sum of the indices in these
sets. We are now ready to discuss these sums in full generality. Assume
therefore that $r$ is fixed, $I,J,K\subset\{1,2,\dots,n\}$ and $I',J',K'\subset\{1,2,\dots,n'\}$
are sets of cardinality $r$ such that $c_{IJK}>0$ and $c_{I'J'K'}>0$.
Let us set $\omega=n-r$ and $\omega'=n'-r$. Choose measures $m,m'\in\mathcal{M}_{r}$
such that $\omega(m)=\omega$, $\omega(m')=\omega'$, and $I,J,K$
(resp. $I',J',K'$) are derived from $m$ (resp. $m'$) via (\ref{eq:15}).
We denote by $\alpha_{\ell},\beta_{\ell},\gamma_{\ell}$ (resp. $\alpha'_{\ell},\beta'_{\ell},\gamma'_{\ell})$
the exit densities of $m$ (resp. $m'$). The sum we are interested
in is\[
\mbox{\ensuremath{\Sigma}}_{m'}(m)=\sum_{\ell=1}^{r}(\alpha_{\ell}'i_{\ell}+\beta_{\ell}'j{}_{\ell}+\gamma_{\ell}'k{}_{\ell})-\omega'n.\]
Observe that $\Sigma_{m'}(m)$ depends only on the exit densities
of $m$ and $m'$, and therefore it can be calculated directly from
the sets $I,J,K$ and $I',J',K'$.

The general reduction will proceed as follows. Assume that we want
to solve the Schubert problem associated to a measure $m\in\mathcal{M}_{r}$.
We calculate the sum $\Sigma_{m'}(m)$ for a certain kind of measure
$m'$ (a rigid tree measure in the terminology introduced below).
If this sum is equal to $-p<0$, then one can effectively reduce the
intersection problem to solving first an intersection problem for
a stretched version of $m'$, followed by the intersection problem
for $m-pm'$, for which we have $c_{m-pm'}=c_{m}$; see Theorem \ref{thm:reduction-procedure}.
The problem corresponding to the stretched version of $m'$ can be
solved algorithmically, as seen in \cite{bcdlt}.

Since $n=\omega(m)+r$, we can rewrite\[
\Sigma_{m'}(m)=\sum_{\ell<\ell'}(\alpha_{\ell}\alpha'_{\ell'}+\beta_{\ell}\beta'_{\ell'}+\gamma_{\ell}\gamma'_{\ell'})-\omega(m)\omega(m')+\left[\sum_{\ell=0}^{r}\ell(\alpha_{\ell}'+\beta_{\ell}'+\gamma_{\ell}')-r\omega(m')\right].\]
We have seen earlier that the sum inside the brackets is equal to
zero, and thus\begin{equation}
\Sigma_{m'}(m)=\sum_{\ell<\ell'}(\alpha_{\ell}\alpha'_{\ell'}+\beta_{\ell}\beta'_{\ell'}+\gamma_{\ell}\gamma'_{\ell'})-\omega(m)\omega(m').\label{eq:sigma-no-n}\end{equation}
This formula has several advantages: it does not depend explicitly
on $r$, and by including the branch points of $m$ and $m'$ in a
triangle of a different size we do not alter the sum. More precisely,
if we enlarge the triangle containing the branch points of the measures,
the value of $r$ changes, but the \emph{nonzero }values $\alpha_{\ell},\alpha'_{\ell}$
remain the same, and they appear in the same order, leaving the sum
$\Sigma_{m'}(m)$ unchanged. The arguments in the remainder of the
paper are easier to visualize when all the branch points are contained
in the interior of $\triangle_{r}$, and the reader is free to make
this additional assumption at any point. Another change which does
not affect the value of $\Sigma_{m'}(m)$ is homothety. Denote by
$S$ and $S'$ the supports of $m$ and $m'$, and let $q$ be a positive
integer. It is then possible to define measures $\mu$ and $\mu'$
supported by $qS$ and $qS'$, respectively, and such that the density
of each segment of the form $qe$ is the original density of $e$.
It is obvious that $\Sigma_{\mu'}(\mu)=\Sigma_{m'}(m)$. Taking, for
instance, $q=2$, each small edge in the support of $m$ turns into
two collinear small edges in the support of $\mu$. It is thus possible
to assume that for every small edge \emph{$e$} in the support of
$m$ there is a second, collinear, edge $e'$ which meets $e$ in
a vertex $V$ which is not a branch point. This is a formal way to
perform an operation which is referred to as `breaking an edge in
half' later on. 

The fact that $\omega(m)\omega(m')=\sum_{\ell,\ell'=1}^{r}\alpha_{\ell}\alpha'_{\ell'}$
implies easily that\[
\Sigma_{m'}(m)+\Sigma_{m}(m')=\omega(m)\omega(m')-\sum_{\ell=1}^{r}(\alpha_{\ell}\alpha'_{\ell}+\beta_{\ell}\beta'_{\ell}+\gamma_{\ell}\gamma'_{\ell}).\]
In particular, when $m=m'$ we have\[
\Sigma_{m}(m)=\frac{1}{2}\left[\omega(m)^{2}-\sum_{\ell=1}^{r}(\alpha_{\ell}^{2}+\beta_{\ell}^{2}+\gamma_{\ell}^{2})\right],\]
a formula requiring fewer multiplications.

\section{Trees and Measures}

Some measures $m\in\mathcal{M}_{r}$ have an underlying tree structure
which we describe next. We start with a special class of planar trees.
We consider trees embedded in the usual Euclidean plane such that
\begin{enumerate}
\item each edge of the tree is a straight line segment of unit length, 
\item each vertex has order 2 or 3, and
\item there are only finitely many vertices of order 3.
\end{enumerate}
These conditions imply that the tree is infinite, but it has a finite
number of \emph{ends}. These are sequences of vertices of the form
$V_{0}V_{1}\cdots$ such that $V_{0}$ has order 3, $V_{j}$ has order
2 for $j\ge1$, and $V_{j}V_{j+1}$ is an edge for each $j\ge0$.
We will require one more condition on our trees.
\begin{enumerate}
\item [(4)]The shortest path joining two different ends contains an odd
number of vertices of order 3.
\end{enumerate}
All the trees we use will satisfy these four properties, and therefore
we will not introduce a special name for this particular species.
An \emph{immersion }of a tree $T\subset\mathbb{R}^{2}$ is simply
a continuous map $\varphi:T\to\mathbb{R}^{2}$ which 
\begin{itemize}
\item is  isometric on each edge,
\item if $VA$ and $VB$ are the two edges meeting at a vertex of order
2, then $2\varphi(V)=\varphi(A)+\varphi(B)$, and
\item if $VA,VB,VC$ are the three edges meeting at a vertex of order 3,
then $3\varphi(V)=\varphi(A)+\varphi(B)+\varphi(C)$, and the restriction
of $\varphi$ to $VA\cup VB\cup VC$ preserves the orientation.
\end{itemize}
It is clear that each tree has a unique immersion up to rigid motions.
Immersions are generally not one-to-one. A tree $T$ is endowed with
arclength measure. Given an immersion $\varphi$ of $T$, we consider
the push-forward $m_{\varphi}$ of this measure. Thus, if we arrange
our immersion such that $\varphi(T)$ is contained in the small edges
of the triangular lattice determined by the vectors $u,v,w$, then
$m$ assigns to each edge a density equal to the number of its preimages
in $T$. The resulting measure clearly satisfies the balance condition
(\ref{eq:balance}) at all vertices. Condition (4) implies that we
can arrange $\varphi$ so that $m_{\varphi}\in\mathcal{M}_{r}$ provided
that $r$ is sufficiently large (so that $\triangle_{r}$ contains
$\varphi(V)$ whenever $V$ is a vertex of order 3 of $T$). A measure
$m\in\mathcal{M}_{r}$ will be called a \emph{tree measure} if $m=m_{\varphi}$
for some immersion $\varphi$ of a tree. The following illustration
shows a tree, and the range of one of its immersions. The arrows indicates
ends of the tree, and the asterisk indicates where one of these ends
is mapped by the immersion.

\begin{center}
\includegraphics[scale=0.7]{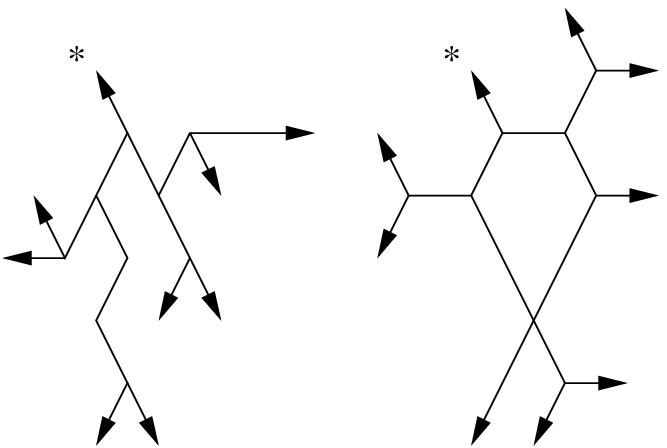}
\par\end{center}

\noindent In the second illustration, some edges of the immersion
have multiplicity two (i.e., they have two preimages under the corresponding
immersion). They are represented by thicker lines.

\begin{center}
\includegraphics[scale=0.7]{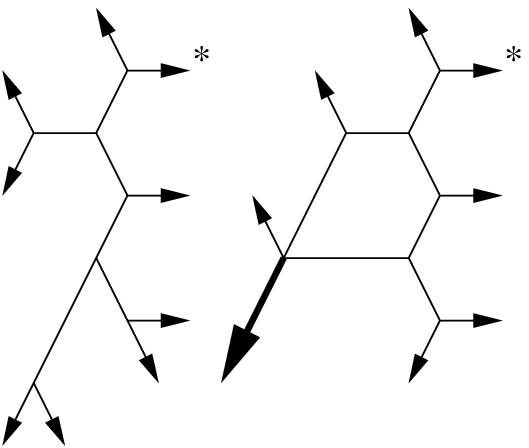}
\par\end{center}

\noindent Here is one more figure illustrating the fact that a tree
measure need not be extremal.

\begin{center}
\includegraphics[scale=0.7]{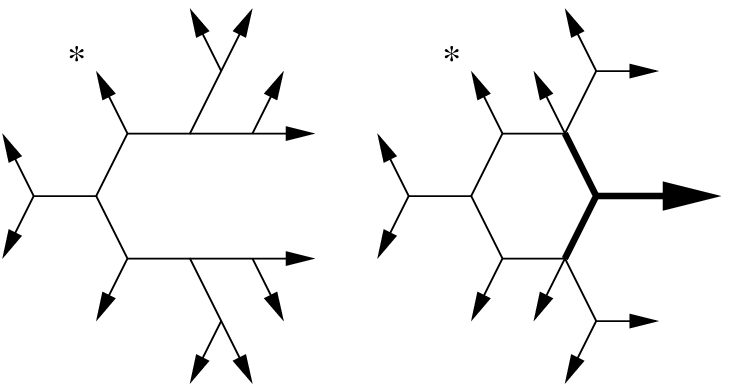}
\par\end{center}

\noindent In this case, the measure $m_{\varphi}$ has  two summands
with unit densities; the support of one of them is pictured below.

\begin{center}
\includegraphics[scale=0.7]{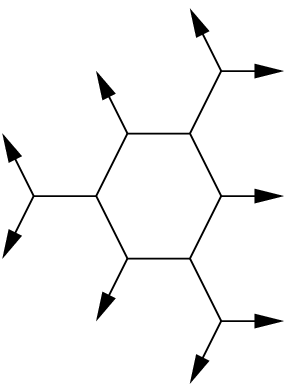}
\par\end{center}

If $m\in\mathcal{M}_{r}$ is a tree measure, it is fairly easy to
see that the number of ends of the corresponding tree $T$ is $3\omega(m)$.
We will write $\omega(T)=\omega(m)$. For the trees above, the value
of $\omega(T)$ is 3 or 4.
\begin{thm}
\label{thm:rigid-skeletons-are-trees}Assume that $m\in\mathcal{M}_{r}$
is a rigid extremal measure. Then there exists a tree measure $m'\in\mathcal{M}_{r}$
such that $m=cm'$ for some constant $c>0$.\end{thm}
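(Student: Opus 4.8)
The plan is to make explicit a tree that lies hidden in the analysis of rigid measures carried out in \cite{bcdlt}. Recall from there that a rigid measure has a unique decomposition into extremal summands, obtained by a recursive procedure that traces each summand out edge by edge, beginning at an exit ray. Since our $m$ is already extremal it is its own unique summand, so running that procedure on $m$ should directly produce the desired tree structure. I would carry it out as follows. After rescaling --- harmless, since the conclusion only asks for $m = cm'$ with $c > 0$ --- assume the densities of $m$ are integers. At each vertex $V$ of the support of $m$, the incident small edges, counted with the densities of $m$, can be split into \emph{tripods} (unit density on three edges whose directions form one of the Mercedes triples $\{u,v,w\}$ or $\{-u,-v,-w\}$) and \emph{straight passes} (unit density on two opposite edges); at a vertex of order $2$ only straight passes occur and the splitting is forced. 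The balance condition guarantees such a splitting exists at every vertex, but it need not be unique, and the heart of the proof --- see below --- is to choose one at each vertex in a way that is globally consistent. Given such a system of choices, assemble the abstract graph $T$ whose edges are the pairs $(e,i)$ with $e$ a small edge in the support of $m$ and $1 \le i \le m(e)$, glued at each vertex according to the chosen splitting. Then $T$ carries a tautological immersion $\varphi$ into the triangular lattice with $m_\varphi = m$, and its order-$3$ vertices are precisely the chosen tripods, so there are only finitely many of them.

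With $T$ constructed, the remaining points are routine. No connected component of $T$ is finite (a finite component would make the restriction of $m$ to it a bounded summand of $m$, hence not proportional to $m$, contradicting extremality), so every component is infinite and, having finitely many order-$3$ vertices, has at least one end; and no component is a bi-infinite line (this would again produce a summand of $m$ not proportional to $m$). Thus every component satisfies conditions (1)--(3). Condition (4) holds because each end is, by the defining conditions on $\mathcal{M}_r$, an exit ray leaving $\triangle_r$ in one of the directions $u,v,w$, so the order-$3$ vertex at its base has Mercedes type $\{u,v,w\}$; since the two types at the ends of a straight segment of $T$ are necessarily opposite, the type alternates along every path, and the path joining two ends therefore passes through an odd number of order-$3$ vertices. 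Finally, writing $m = \sum_C m_{\varphi|C}$ over the components $C$, each $m_{\varphi|C}$ is a nonzero element of $\mathcal{M}_r$ dominated by $m$, hence a positive multiple $\mu_C m$ of $m$ by extremality; choosing one component $C_0$ and setting $m' = m_{\varphi|C_0}$, $c = \mu_{C_0}^{-1}$, yields a tree measure $m'$ with $m = cm'$.

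The crux, and the only place where rigidity (as opposed to mere extremality) enters, is the globally consistent choice of splittings --- equivalently, the assertion that $T$ can be assembled so as to contain no cycle. Extremality by itself is not enough: the ``hexagon with six rays'' is an extremal measure, has a \emph{unique} local splitting (a single tripod) at each of its vertices, and those splittings glue into a graph with a $6$-cycle, so it is not a tree measure; consistently with the theorem, it is not rigid. For the rigid case I envisage a mutation argument: a cycle in $T$ would enclose a region of the plane across which the densities of $m$ could be redistributed --- flipping the cycle --- producing a measure $\tilde m \ne m$ with $\omega(\tilde m) = \omega(m)$ and the same exit densities, and contradicting $c_m = 1$; the remaining, non-forced, local choices must then be made to fit, where the earlier device of breaking edges in half is convenient. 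Making this redistribution precise --- checking that it respects the balance condition and leaves the exit densities unchanged --- and organizing everything as the tracing procedure of \cite{bcdlt} is the substance of the argument.
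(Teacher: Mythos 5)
Your overall strategy (split the incident edge-multiplicities at each lattice point into tripods and straight passes, glue the copies $(e,i)$ accordingly, and argue the assembled graph is a forest) is a genuinely different route from the paper, but it has a real gap at exactly the point you yourself flag as the crux. The paper does not attempt any global-consistency argument: it invokes the descendance machinery of \cite{bcdlt} for rigid measures --- every edge of the support descends from a root edge $e$ with $m(e)=1$, and $m(f)$ equals the \emph{number of descendance paths} from $e$ to $f$ --- and then defines $T$ abstractly as the set of descendance paths $X_0X_1\cdots X_n$ (a universal-cover-type construction), with $\varphi(X_0\cdots X_n)=X_n$. Because the vertices of $T$ are paths, $T$ is a tree by construction, and $m=m_\varphi$ follows from the path-counting identity; no acyclicity argument is ever needed. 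In your proposal, by contrast, the statement that a consistent, cycle-free system of local splittings exists is the whole theorem, and your ``flip the cycle'' mutation is only a sketch: you do not say what the redistributed measure is, why it is positive, why it satisfies the balance condition, why it has the same exit densities, nor why it differs from $m$ --- and you acknowledge this (``Making this redistribution precise \dots is the substance of the argument''). Note also that your mutation argument, as stated, would prove the stronger claim that \emph{every} splitting of a rigid extremal measure is acyclic, whereas what you need is only that \emph{some} splitting is; when edges have multiplicity $\ge 2$ the pairing of copies at the two endpoints of a doubled edge is a genuine choice, and it is far from clear that a bad pairing always yields a flippable configuration. Establishing this would essentially require you to redevelop the rigidity analysis (root edges, descendance, the local-structure Lemma \ref{lem:local-structure-of-skeletons}) that the paper simply cites.

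The peripheral parts of your argument are mostly fine and in one respect nicer than the paper's exposition: your verification of condition (4) via the alternation of Mercedes types along straight segments, with all ends based at type-$\{u,v,w\}$ tripods, is a clean argument that the paper leaves implicit. The existence of a local splitting from the balance condition, the exclusion of finite components and bi-infinite lines, and the use of extremality to identify $m_{\varphi|C_0}$ as a positive multiple of $m$ are all acceptable (indeed, once acyclicity is known, minimum degree $2$ already forces every component to be infinite). But without a complete proof of the acyclic gluing, the proposal does not yet prove the theorem.
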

\begin{proof}
Assume, more generaly, that $m\in\mathcal{M}_{r}$ is a rigid measure.
Given two adjacent small edges $AB,BC$ in the support of $m$, we
write $AB\to_{m}BC$ if either 
\begin{enumerate}
\item [(a)] $A,B,C$ are collinear and one of the edges $BX$ such that
$\varangle XBC=60^{\circ}$ satisfies $m(BX)=0$, or 
\item [(b)] $\varangle ABC=120^{\circ}$, and the edge $BX$ opposite $AB$
satisfies $m(BX)=0$. 
\end{enumerate}
Given an edge $e=AB$, there exist at most two edges $f$ adjacent
to $B$ such that $e\to_{m}f$. More generally, if $e,f$ are two
small edges, we write $e\Rightarrow_{m}f$ if either $e=f$, or \[
e=e_{1}\to_{m}e_{2}\to_{m}\cdots\to_{m}e_{k}=f\]
for some chain $\gamma=\{e_{1},e_{2},\dots,e_{k}\}$, $e_{j}=X_{j-1}X_{j}$,
of small edges. This relation is called \emph{descendance}, and it
was proved in \cite{bcdlt} that each edge in the support of $m$
is the descendant of a minimal (or \emph{root}) edge contained in
$\triangle_{r}$. Moreover, the descendants of a root edge form the
support of an extremal measure. Here minimality is defined up to the
equivalence relation $e\Leftrightarrow_{m}f$ if $e\Rightarrow_{m}f$
and $f\Rightarrow_{m}e$. A chain $\gamma$ as above is called a \emph{descendance
path} from $e$ to $f$. 

Assume now that $m$ is extremal and $e$ is a root edge for $m$
contained in $\triangle_{r}$. Dividing $m$ by $c=m(e)$, we may
assume that $m(e)=1$. If $f$ is any edge in the support of $m$,
$m(f)$ equals the number of descendance paths from $e$ to $f$ (cf.
\cite{bcdlt}). Note that $m$ may have several (often, infinitely
many) root edges $f$; they are characterized by the equality $m(f)=1$. 

The construction of the required tree $T$ is somewhat analogous to
the construction of a universal covering space. Abstractly, the vertices
of $T$ are sequences $X_{0}X_{1}\cdots X_{n}$ such that either $n=0$
and $X_{0}$ is an endpoint of $e$, or $n\ge1$ and $\gamma=\{X_{0}X_{1},X_{1}X_{2},\dots,X_{n-1}X_{n}\}$
is a descendance path from $e$. The vertices $X_{0},X_{1}$ are identified
with $X_{1}X_{0},X_{0}X_{1}$, respectively, if $X_{0}$ and $X_{1}$
are the endpoints of $e$. Two vertices of the form $X_{0}X_{1}\cdots X_{n}$,
$X_{0}X_{1}\cdots X_{n}X_{n+1}$ are joined by an edge. Assigning
unit length to the edges of $T$, there is a map $\varphi:T\to\mathbb{R}^{2}$
which sends a vertex $X_{0}X_{1}\cdots X_{n}$ to $X_{n}$. We embed
the tree $T$ into the plane in such a way that this map $\varphi$
preserves orientation at each triple vertex of $T$. It should be
clear now that $m=m_{\varphi}$.
\end{proof}
Let $\varphi$ be the immersion of $T$ described in the preceding
proof, and let $e$ be an edge of $T$ such that $\varphi(e)$ is
a root edge for the measure $m$. We can orient all other edges of
$T$ \emph{away from} $e$. It was shown in \cite{bcdlt} that the
map $\varphi$ has the following additional property: if $g$ and
$h$ are two edges such that $\varphi(g)=\varphi(h)$, then $\varphi$
induces the same orientation on this common image. In other words,
the edges in the support of $m$, other than $\varphi(e)$, can be
consistently oriented in the direction of a descendance path from
$\varphi(e)$. The following lemma is also proved in \cite{bcdlt}
(see the discussion following Theorem 3.5 in \cite{bcdlt}).
\begin{lem}
\label{lem:local-structure-of-skeletons}Let $m$ be a rigid extremal
measure, and orient the edges in its support away from a fixed root
edge. Each lattice point meets at most four edges in the support of
$m$, and the possible positions of these edges, including their orientations,
are as follows

\begin{center}\includegraphics{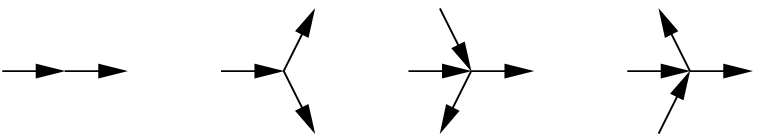}\end{center}

\noindent up to rotations.
\end{lem}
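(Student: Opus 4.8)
\textbf{Proof proposal for Lemma \ref{lem:local-structure-of-skeletons}.}

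The plan is to reduce the statement to a purely local analysis at a single lattice point $P$ in the support of the rigid extremal measure $m$, using the balance condition \eqref{eq:balance}, the descendance structure recalled in the proof of Theorem \ref{thm:rigid-skeletons-are-trees}, and the consistency of orientations noted in the paragraph preceding the lemma. First I would fix a root edge and orient every other edge of the support away from it along descendance paths; the cited result of \cite{bcdlt} guarantees this is well defined. At the lattice point $P$ there are six small edges emanating from $P$, grouped into three ``lines'' (pairs of opposite collinear edges). The balance condition says that the difference $m(PB)-m(PB')$ of the densities on the two edges of a line through $P$ is the same for all three lines; call this common value $\delta$. Since $m$ is extremal, every edge in its support has density equal to the number of descendance paths from the root edge, hence density $\ge 1$; so each of the (at most six) edges at $P$ carries density $0$ or a positive integer.

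Next I would split into the cases $\delta = 0$, $\delta = 1$, $\delta \ge 2$, and $\delta \le -1$ (the last two being symmetric to each other after relabeling). If $\delta\ge 2$, then on each line the larger of the two densities is $\ge 2$, forcing at least one incoming edge at $P$ to have density $\ge 2$; tracing back a descendance path one obtains a vertex violating the local picture inductively, or more directly one shows $P$ would have to be a branch point with too much multiplicity, contradicting rigidity (an edge of density $\ge 2$ at $P$ together with the branching forces two distinct descendance paths to a common edge, hence a non-extremal summand — this is where I would invoke that $m$ is extremal and rigid). So $|\delta|\le 1$, and after a rotation we may take $\delta \in \{0,1\}$. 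When $\delta = 0$ all three lines are ``balanced'': each line has its two densities equal, so the support near $P$ consists of full lines through $P$ (density $1$ each by the root-edge characterization), and the possibilities are: no edges, one full line, two full lines, or three full lines through $P$; the orientation-consistency rules out the configuration of three lines unless the in/out pattern matches one of the listed pictures. When $\delta = 1$, exactly the edges on the ``heavy'' side of each line are present with density $1$ and the ``light'' side has density $0$; this yields the remaining pictures, a vertex of order $3$ together with possibly one through-line, again with orientations forced by the away-from-root rule and the consistency property.

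The key book-keeping step is to check that, for each surviving combinatorial type at $P$, the orientations induced by descendance paths are exactly the ones drawn, and conversely that no other orientation pattern is compatible — here I would use that an edge oriented \emph{into} $P$ is part of a descendance path arriving at $P$, so at least one such edge exists unless $P$ is an endpoint of the root edge, and that the relation $\to_m$ (clauses (a) and (b) in the proof of Theorem \ref{thm:rigid-skeletons-are-trees}) dictates which outgoing edges at $P$ are forced to be in the support (those making a $120^\circ$ or straight angle with the incoming edge) versus which are forced to have density $0$. The main obstacle I anticipate is the case $\delta\ge 2$: ruling it out cleanly requires translating ``density $\ge 2$ at a vertex'' into the existence of a genuine second extremal summand (contradicting extremality) or a second measure with the same exit densities (contradicting rigidity), and the argument for this is essentially the tree-immersion picture from the proof of Theorem \ref{thm:rigid-skeletons-are-trees}; I would organize it by noting that two descendance paths reaching a common edge at $P$ can be continued to the boundary, producing either a strictly smaller nonzero measure or distinct boundary data, and invoking \cite[Corollary 3.6]{bcdlt}.
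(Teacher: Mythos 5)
The paper does not actually prove this lemma: it is quoted verbatim from \cite{bcdlt} (the discussion following Theorem 3.5 there), so there is no internal argument to compare yours with, and your proposal has to stand on its own. It does not, because its central step is false. You rule out $\delta\ge 2$, and later assign density $1$ to all edges near $P$, on the grounds that an edge of density $\ge 2$ at $P$ --- equivalently, two distinct descendance paths from the root edge to a common edge --- would yield ``a non-extremal summand'' or contradict rigidity. But by the very description recalled in the proof of Theorem \ref{thm:rigid-skeletons-are-trees}, for an extremal measure normalized so that a root edge has density $1$, the density of an arbitrary edge \emph{equals} the number of descendance paths to it, and this number is frequently $\ge 2$: the tree-immersion figure in Section 3 has edges of multiplicity two, and the rigid extremal tree measures of Section 7 (to which Theorem \ref{thm:sigma(m,m)}(3) is applied, so they are extremal with unit root density) have edges of density $2$ and exit densities equal to $3$. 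So multiple descendance paths to one edge contradict neither extremality nor rigidity, the bound $|\delta|\le 1$ is unjustified, and the ensuing case analysis (``$\delta=0$: full lines of density $1$''; ``$\delta=1$: heavy side density $1$, light side $0$'') collapses.

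There is a second, independent gap. In your $\delta=0$ case you allow up to three full lines through $P$, i.e.\ six edges, and dispose of the unwanted configurations by saying that ``orientation-consistency rules out the configuration of three lines unless the in/out pattern matches one of the listed pictures.'' Excluding five- and six-edge stars, and the bad three- and four-edge patterns, is precisely the assertion of the lemma, so this is circular. Moreover it is exactly here that rigidity must enter in a substantive way: the argument in \cite{bcdlt} shows that a forbidden local pattern allows one to reroute mass in a neighborhood of $P$ (exchanging density between the two possible $60^{\circ}$ continuations of an incoming edge) so as to produce a different measure with the same exit densities, contradicting rigidity, or to split off a proper nonzero summand, contradicting extremality. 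Your proposal never performs such a rerouting; the only point where rigidity or extremality is invoked is the incorrect density-$\ge 2$ claim above, so the lemma's conclusion --- at most four edges, and only the pictured oriented configurations --- is not established. The balance condition \eqref{eq:balance} and the consistent orientation away from the root edge are correctly used, but they alone cannot yield the statement.
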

In order to study the sums $\Sigma_{m'}(m)$, we will also need some
maps which are closely related to immersions, but are discontinuous.
Assume that $T$ is a tree, and $\varphi$ is an immersion of $T$
such that the induced measure is in $\mathcal{M}_{r}$ for some $r$.
Denote by $T_{\circ}$ the set of points in $T$ which are not vertices.
A function $\psi:T_{\circ}\to\mathbb{R}^{2}$ will be called a \emph{fractured
immersion} if 
\begin{enumerate}
\item the range of $\psi$ is contained in the small edges of the triangular
lattice determined by $u,v,w$,
\item there is an immersion $\varphi$ of $T$ such that $\psi(t)-\varphi(t)$
is constant on the interior of every edge, and
\item $\psi$ extends continuously to all except finitely many vertices
of $T$.
\end{enumerate}
Let $\psi$ be a fractured immersion of a tree $T$. We will associate
to each vertex $V$ of $T$ an integer $\delta_{\psi}(V)$ which measures
how badly fractured $\psi$ is at $V$. If $\psi$ extends continuously
to the point $V$ we set $\delta_{\psi}(V)=0$. Assume next that the
order of $V$ is 2 and the two edges $AV,VB$ are mapped to $A'V',V''B'$,
respectively, with $V'\ne V''$. We will set $\delta_{\psi}(V)=q$
if the point $V''$ lies $q$ lattice units \emph{to the left} of
the line joining $A'$ and $V'$, where this line is oriented so that
$A'V'$ points toward $V'$. Note that $V''$ could be to the right
of this line, in which case $q<0$, and $V''$ (as well as $B'$)
could be on this line, in which case $q=0$. Finally, let $V$ be
a vertex of order 3, assume that the three edges $AV,BV,CV$ are mapped
to $A'V',B'V'',C'V'''$, and note that these three segments still
form $120^{\circ}$ angles. If the lines containing these three segments
are concurrent, we set $\delta_{\psi}(V)=0$. Otherwise, these three
lines form an equilateral triangle $\triangle$ with sidelength $q$.
Orient the sides of this triangle so that the segments $A'V',B'V'',C'V'''$
point toward $V',V'',V'''$, respectively. If the boundary of $\triangle$
is oriented clockwise, set $\delta_{\psi}(V)=-q$, and in the contrary
case set $\delta_{\psi}(V)=q$. The following figures shows three
cases in which the values of $\delta_{\psi}(V)$ are $0,-2$ and $1$.
The dotted lines represent small edges.

\begin{center}
\includegraphics{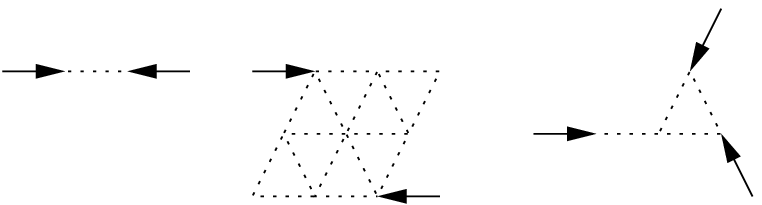}
\par\end{center}

\noindent The orientations indicated above are used exclusively for
the calculation of the numbers $\delta_{\psi}(V)$. In the proofs
below we will need to orient all the edges of a tree $T$ (not just
the ones adjacent to $V$), and this will generally be the orientation
\emph{away} from a fixed vertex or edge.

In the following statement, the segment $A_{0}X_{0}$ is deemed to
exit $\triangle_{r}$ at the point $A_{0}$, rather than $C_{r}$,
while $C_{r}Z_{r}$ is deemed to exit at $C_{r}$. Of course, this
issue does not arise when the corners of $\triangle_{r}$ are not
exit points, and this can be achieved by enlarging the triangle.
\begin{thm}
\label{thm:sum-of-ends-of-broken-tree}Let $\psi$ be a fractured
immersion of a tree $T$ such that all the limits of $\psi$ at discontinuity
points are contained in $\triangle_{r}$. For each end $E$ of $T$,
denote by $\ell(E)$ the rank of the exit point of $\psi(E)$ from
$\triangle_{r}$. In other words, $\ell(E)=\ell$ if the closure of
$\psi(E)$ intersects $\partial\triangle_{r}$ in $A_{\ell},B_{\ell},$
or $C_{\ell}$. Then we have\[
\sum_{{\rm all\,\, ends}\: E\:{\rm of}\: T}\ell(E)=r\omega(T)+\sum_{{\rm all\,\, vertices}\: V\:{\rm of}\: T}\delta_{\psi}(V).\]
\end{thm}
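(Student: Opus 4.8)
The plan is to prove the identity by a discrete Gauss–Bonnet / Stokes-type argument, organizing the sum $\sum_\ell \ell(E)$ as a telescoping sum along each edge of $T$ and accumulating the local defects $\delta_\psi(V)$ at the vertices. First I would assign to each point $t$ in $T_\circ$ a notion of ``height'' in $\triangle_r$: for the three families of boundary segments $A_\ell X_\ell$, $B_\ell Y_\ell$, $C_\ell Z_\ell$, the rank $\ell$ of an exit point is read off by a triple of affine functionals (the barycentric-type coordinates dual to $u,v,w$) measuring how many lattice lines parallel to each side of $\triangle_r$ separate the point from the corresponding side. Each small edge $\psi|_e$ is a unit segment in one of the three lattice directions, so traversing it changes exactly one of these three functionals by $0$ and the other two by $\pm1$; along an end $E=V_0V_1\cdots$ the eventual exit rank $\ell(E)$ is thus the value at $V_0$ of the appropriate functional, plus the (finite) number of steps after $V_0$ that move ``outward'' minus those that move ``inward.'' Since after $V_0$ all vertices have order $2$, the image $\psi(E)$ is eventually a straight ray in a fixed lattice direction heading out of $\triangle_r$, so this bookkeeping is finite and well-defined.

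Next I would set up the global count. Pick a root edge $e_0$ and orient every edge of $T$ away from it, as the excerpt suggests; then every end is reached from $e_0$ by a unique oriented path. For each oriented edge $g=VW$ of $T$ define an increment $\partial(g)$ equal to the change, along $\psi|_g$, in whichever of the three functionals is the ``relevant'' one for the end(s) lying beyond $g$ — equivalently, sum over the three functionals the signed contribution of $g$ weighted by which ends it feeds. The key combinatorial identity to verify is that $\sum_{\text{ends }E}\ell(E)$ equals $\sum_E \ell_0(E)$ (a ``base'' term where $\ell_0(E)$ is the value of the relevant functional at the first order-$3$ vertex of $E$) plus a sum over all oriented edges of their increments; and that the ``base'' term collapses to $r\,\omega(T)$. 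For the latter: $T$ has $3\omega(T)$ ends grouped into $3$ families of $\omega(T)$ ends each according to direction, and the $\ell_0$-values within one family, when one translates each end's starting ray out to $\partial\triangle_r$ across exactly $r$ lattice lines in the generic (interior) configuration, contribute $r$ apiece — this is precisely the normalization making $m_\varphi\in\mathcal M_r$ and matches the vanishing conditions $m(A_jX_{j+1})=0$ etc. Homothety/enlargement invariance (already noted in the excerpt for $\Sigma_{m'}(m)$) lets me reduce to the case where all order-$3$ images are interior to $\triangle_r$, where these counts are cleanest.

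The heart of the matter — and the step I expect to be the main obstacle — is the \emph{local} computation: showing that when the edge-increments are regrouped around a single vertex $V$, the contribution of $V$ to the global sum is exactly $\delta_\psi(V)$. For an order-$2$ vertex with incoming edge $AV$ and outgoing edge $VB$, a continuous $\psi$ forces the two image segments to be either collinear or to form a $120^\circ$ angle (the only possibilities in the lattice), in which case the relevant functionals behave ``smoothly'' and $V$ contributes $0$; when $\psi$ is fractured, the displacement of $V''$ relative to the oriented line through $A'V'$ by $q$ lattice units shifts exactly the number of lattice lines crossed downstream by $q$ — this is exactly how $\delta_\psi(V)=q$ was defined, so the identification is essentially tautological once the sign conventions are aligned. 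For an order-$3$ vertex one does the same accounting for the two outgoing edges against the incoming one: if the three image lines are concurrent the defect cancels (contribution $0$), and otherwise the equilateral triangle $\triangle$ of sidelength $q$ they bound contributes $\pm q$ according to its orientation, matching the stated definition of $\delta_\psi(V)$. The delicate points will be (i) getting every orientation convention — of $\triangle$, of the boundary sides of $\triangle_r$, of the functionals, and of ``to the left'' — mutually consistent so the signs come out right rather than negated; (ii) handling the boundary edge-case flagged just before the theorem, namely that $A_0X_0$ is counted as exiting at $A_0$ and $C_rZ_r$ at $C_r$, which amounts to a consistent tie-breaking rule at corners of $\triangle_r$; and (iii) confirming that the telescoping is legitimate, i.e. that every edge's increment is counted once with the correct multiplicity (equal to the number of ends downstream of it), which follows from the tree structure but must be stated carefully. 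Once the local lemma is in hand, summing over all vertices and all edges and invoking the base-term evaluation $\sum_E\ell_0(E)=r\,\omega(T)$ yields the claimed formula.
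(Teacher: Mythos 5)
Your telescoping strategy could in principle be carried out, but as written it has two genuine gaps, and they sit exactly where the content of the theorem lives. First, the base term. You take $\ell_0(E)$ to be the value of the relevant functional at the first order-$3$ vertex of the end $E$ and assert $\sum_E\ell_0(E)=r\,\omega(T)$. When $\psi$ has no discontinuities the relevant functional is constant along the image of the entire end (order-$2$ vertices of an immersion map to straight configurations), so $\ell_0(E)=\ell(E)$ and your asserted base identity is literally the case of the theorem with no fractures; the one-sentence justification about translating each starting ray ``across exactly $r$ lattice lines'' so that the ends ``contribute $r$ apiece'' is not a proof of it (taken at face value it would weight each of the $3\omega(T)$ ends by $r$). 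The paper disposes of this case by citing the identity $\sum_\ell\ell(\alpha_\ell+\beta_\ell+\gamma_\ell)=r\,\omega(m)$ from Section 2. You must either cite that, or change the scheme: evaluate all three functionals at one common base point and use that each side of $\triangle_r$ receives exactly $\omega(T)$ ends together with the identity (rank functional for $A$) $+$ (rank functional for $B$) $+$ (rank functional for $C$) $\equiv r$; but then you still owe a proof that the downstream-weighted increments along the edges vanish for an honest immersion, which is the second gap. Note also that your decomposition is internally inconsistent: with a per-end base point the edge increments between the root and those vertices are counted twice, while with a common base point the base term is no longer $\sum_E\ell_0(E)$ as you defined it.

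Second, the local regrouping that you call ``essentially tautological'' is not. At an order-$2$ fracture the entire downstream subtree is translated by the vector $\Delta=V''-V'$, so the change it causes in $\sum_E\ell(E)$ is $n_A\langle df_A,\Delta\rangle+n_B\langle df_B,\Delta\rangle+n_C\langle df_C,\Delta\rangle$, where $n_A,n_B,n_C$ count the downstream ends exiting through the three sides; a priori this depends on the downstream combinatorics and on both components of $\Delta$, whereas $\delta_\psi(V)$ records only the transverse (``to the left'') component $q$. To conclude that the contribution is exactly $\delta_\psi(V)$ (and, in the unfractured case, that every edge contributes $0$) you need a counting fact such as: for every edge $g$ of $T$, the sum of the exit directions of the ends downstream of $g$ equals the image direction of $g$ (proved by induction, since the three directions at a triple vertex sum to zero); equivalently $(n_A,n_B,n_C)$ is determined by the direction of $g$ up to adding $(1,1,1)$, so the longitudinal part of $\Delta$ pairs to zero and the transverse part pairs to exactly $q$, with the analogous statement at order-$3$ fractures. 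Nothing in your write-up states or proves this, and without it the regrouping step fails. (A smaller slip: at a continuous order-$2$ vertex of a fractured immersion the two image segments are necessarily collinear; the $120^{\circ}$ alternative you allow cannot occur, by the definition of an immersion.) For comparison, the paper avoids this global bookkeeping altogether: it inducts on the number of discontinuity vertices, cutting $T$ at a fractured vertex and grafting new ends so that each piece carries a fractured immersion with fewer discontinuities, and verifies identities such as $\ell+k+\delta_\psi(V)=r$ and $\omega(T')+\omega(T'')=\omega(T)+1$, with the Section 2 identity as base case. If you prefer your route, the conservation lemma above plus a careful sign check at order-$2$ and order-$3$ fractures is the missing core.
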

\begin{proof}
We proceed by induction on the number of vertices where $\psi$ does
not extend continuously. When this number is equal to zero, $g$ is
an immersion, and the sum in the left hand side is nothing but\[
\sum_{\ell}\ell(\alpha_{\ell}+\beta_{\ell}+\gamma_{\ell})=r\omega(m),\]
where $\alpha_{\ell},\beta_{\ell},\gamma_{\ell}$ are the exit densities
of the corresponding measure $m$. This is precisely the desired identity
because $\omega(m)=\omega(T).$ Assume then that the theorem has been
proved for all fractured immersions with fewer discontinuity points
than $\psi$, and there exists at least one vertex $V$ where $\psi$
does not extend continuously. Consider first the case when $V$ is
of order $2$, and the vertices $AV,VB$ are mapped by $\psi$ to
$A'V',V''B'$, which we will assume to be horizontal for definiteness.
By transposing the points $A,B$, we can also assume that $A'$ is
to the left of $V'$ and $B'$ is to the right of $V''$. There is
then a point $A_{\ell}$ such that the segment $A_{\ell}V''$ is horizontal;
denote by $a$ its length. Similarly, there is a point $C_{k}$ such
that the segment $V'C_{k}$ is horizontal. The definition of $\delta$
implies that\begin{equation}
\ell+k+\delta_{\psi}(V)=r.\label{eq:l+k}\end{equation}
We now form two trees in the following way. Cut the tree $T$ at the
point $V$, and add to the part containing $AV$ an end $VV_{1}V_{2}\cdots$,
thus forming a tree $T'$. Analogously, add to the part containing
$BV$ a path $VW_{1}W_{2}\cdots W_{a}$, where $W_{1},W_{2},\dots,W_{a-1}$
have order 2, and two ends meeting at $W_{a}$, thus forming a tree
$T''$. The map $\psi$ gives rise to two fractured immersions $\psi'$
and $\psi''$ of $T'$ and $T''$ as follows: $\psi'(VV_{1}V_{2}\cdots)$
is the half line starting with $A'V'$, $\psi''($$VW_{1}W_{2}\cdots W_{a})=V''A_{\ell}$,
and the two ends meeting at $W_{a}$ are mapped onto the two half
lines starting at $A_{\ell}$ and pointing left. It is clear that
$\psi'$ and $\psi''$ have fewer vertices of discontinuity than $\psi$,
and therefore the desired formula is true for $\psi'$and $\psi''$.
It is clear that\[
\sum_{\text{vertices }W\text{ of }T}\delta_{\psi}(W)=\sum_{\text{vertices }V'\text{ of }T'}\delta_{\psi'}(V')+\sum_{\text{vertices }V''\text{ of }T''}\delta_{\psi''}(V'')+\delta_{\psi}(V),\]
while

\[
\sum_{\text{all ends }E'\text{ of }T'}\ell(E')+\sum_{\text{all ends }E''\text{ of }T''}\ell(E'')=\sum_{\text{all ends }E\text{ of }T}\ell(E)+\ell+k.\]
 The desired equality follows then from (\ref{eq:l+k}) because $\omega(T')+\omega(T'')=\omega(T)+1$.
The solid arrows in the following illustration are the oriented segments
$A'V'$ and $B'V''$, while the dashed lines indicate where the additional
edges in $T'$ and $T''$ are mapped,. Their exit points from $\triangle_{r}$
are $A_{\ell}$, $B_{0}$ and $C_{k}$.

\begin{center}\includegraphics{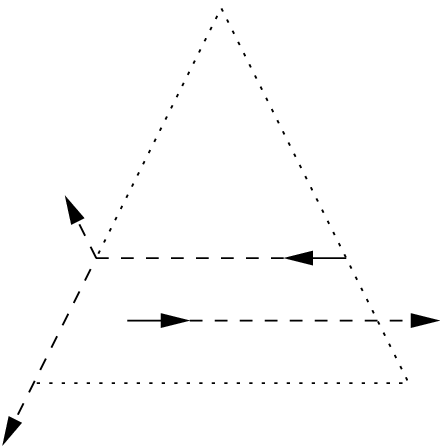}\end{center}

Consider next the case that $V$ is of order three, and the three
edges $AV,BV,CV$ of $T$ are mapped to $A'V',B'V'',C'V'''$. Assume
that $A,B,C$ are arranged clockwise around $V$. A cyclic permutation
allows us to assume that $A'V'$ is horizontal, and we must consider
the two cases where $A'$ is to the left or to the right of $V'$.
These two situations are illustrated below.

\begin{center}\includegraphics{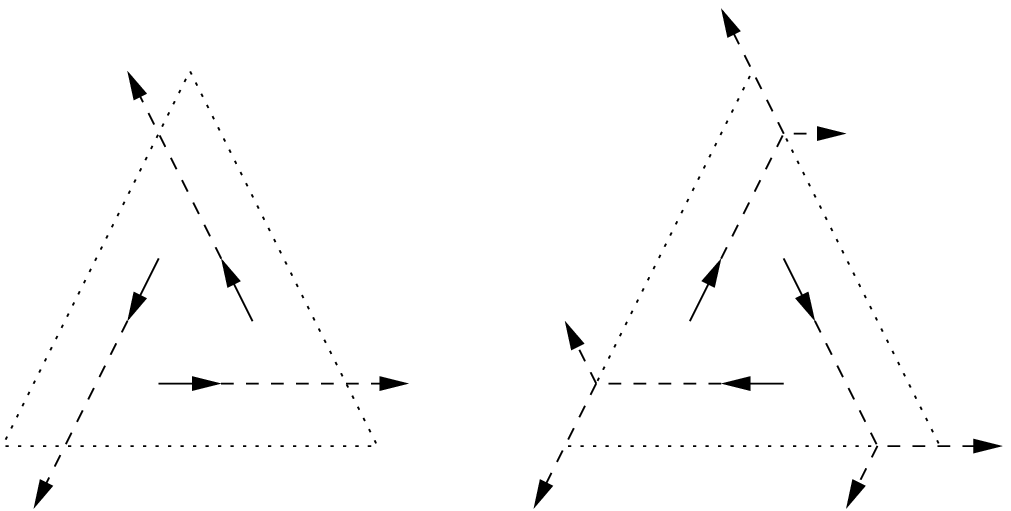}\end{center}

\noindent Assume first that $A'$ is on the left. The half lines $A'V'$,
$B'V''$, $C'V'''$ exit $\triangle_{r}$ at points $C_{k},B_{\ell},A_{p}$,
respectively. As in the preceding proof, we cut $T$ at the point
$V$, and form three trees $T',T'',T'''$ by attaching to the part
of $T$ which contains $A,B,C,$ respectively, an end attached at
$V$. The map $\psi$ gives rise to three fractured immersions $\psi',\psi'',\psi'''$
of these trees. For instance, $\psi'$ maps the additional end at
$V$ to the half line starting with $V'C_{k}$. Moreover, the new
fractured immersions have fewer discontinuity points than $\psi$,
and therefore the inductive hypothesis applies to them. As in the
preceding case, we have $\omega(T')+\omega(T'')+\omega(T''')=\omega(T)+1$,
and\[
k+\ell+p+\delta_{\psi}(V)=r.\]
 The desired formula follows now easily. Finally, consider the case
in which $A'$ is to the right of $V'$. In this case, the half lines
$A'V',B'V'',C'V'''$ exit $\triangle_{r}$ at points $A_{\ell},B_{k},C_{p}$,
respectively, and the trees $T',T'',T'''$ must be constructed by
attaching at $V$ a few edges followed by two ends. In this case we
have $\omega(T')+\omega(T'')+\omega(T''')=\omega(T)+2$ and the reader
can verify easily that $k+\ell+p+\delta_{\psi}(V)=2r$. The conclusion
follows as before.
\end{proof}

\section{Inflations and Fractured Immersions}

We recall from \cite{KTW} (see also \cite{bcdlt}) that every measure
$\nu\in\mathcal{M}_{r}$ has an associated puzzle obtained by inflating
$\nu$. The \emph{inflation} of $\nu$ is defined as follows. Cut
the plane along the edges in the support of $\nu$ to obtain a collection
of \emph{puzzle} pieces, and translate these pieces away from each
other in the following way: the parallelogram formed by the two translates
of a side $AB$ of a white puzzle piece has two sides of length equal
to the density of $\nu$ on $AB$ and $60^{\circ}$ clockwise from
$AB$. The balance condition (\ref{eq:balance}) implies that the
original puzzle pieces and these parallelograms fit together, and
leave a space corresponding to each branch point in the support of
$\nu$. Here is an illustration of the process with $r=3$; the thinner
lines in the support of the measure have density one, and the thicker
ones density 2. The original pieces of the triangle $\triangle_{r}$
are white, the added parallelogram pieces are dark gray, and the branch
points become light gray pieces. Each light gray piece has as many
sides as there are branches at the original branch point.

\begin{center}
\includegraphics{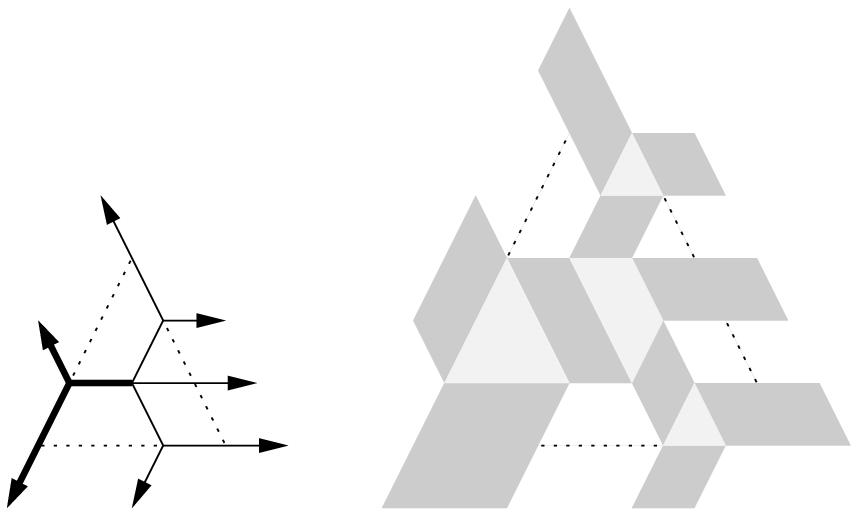}
\par\end{center}

\noindent The dotted lines indicating the boundary of $\triangle_{r}$
have been translated so that they now outline a triangle with sides
$r+\omega(\nu)$, which we may assume is precisely $\triangle_{r+\omega(\nu)}$.
The decomposition of this triangle into white, dark gray, and light
gray pieces is the puzzle associated to $\nu$. The white regions
in the puzzle are called `zero regions', and the light gray ones `one
regions', and the dark gray parallelograms `0-1 regions' in \cite{KTW}.

The main use of inflations will be to produce fractured immersions
from a given immersion of a tree $T$. Assume indeed that $T$ is
a tree, $\varphi$ is an immersion of $T$ such that the corresponding
measure $m'=m_{\varphi}$ is in $\mathcal{M}_{r}$, and let $\nu\in\mathcal{M}_{r}$
be another measure. Assume that each edge of $T$ has been given an
orientation, and that all the edges belonging to an end of $T$ have
been oriented outward (i.e., towards the infinite part of that end).
For each edge $e$ in $T$ such that $\varphi(e)$ is in the support
of $\nu$, we attach $\varphi(e)$ to the white puzzle piece \emph{on
the right of} $\varphi(e)$ when $\varphi(e)$ is given the orientation
induced by the orientation of $e$. For edges $e$ with $\nu(e)=0$,
$\varphi(e)$ is contained in a white puzzle piece, and it moves along
with that piece. If we denote now by $\psi(e)$ the translate of $\varphi(e)$
in the puzzle construction, we obviously obtain a fractured immersion.
The following figure illustrates the process as applied to a measure
$m'=m_{\varphi}$ whose support is pictured below, and $\nu$ is the
measure whose inflation was depicted in the preceding figure. We have
oriented all the edges away from the branch point inside $\triangle_{3}$,
and completed the outline of $\triangle_{6}$.

\begin{center}
\includegraphics{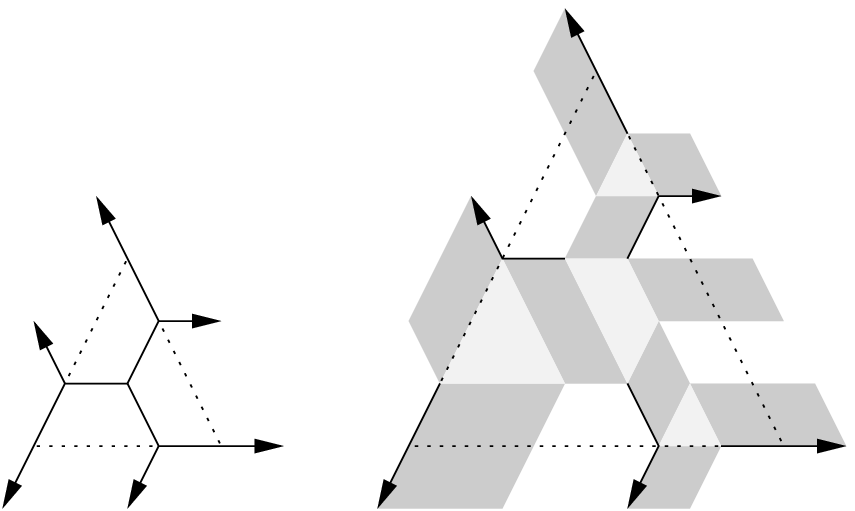}
\par\end{center}

Note that all the fractures of $\psi$ are contained in $\triangle_{r+\omega(\nu)}$,
and therefore the formula in Theorem \ref{thm:sum-of-ends-of-broken-tree}
applies. Let $\alpha_{j,}\beta_{j},\gamma_{j}$ be the exit densities
of $\nu$, and let $\alpha'_{j},\beta'_{j},\gamma'_{j}$ be the exit
densities of $m'$. Then it is easy to see that\begin{eqnarray*}
\sum_{\text{all ends }E\text{ of }T}\ell(E) & = & \sum_{\ell=0}^{r}\left[\alpha'_{\ell}\left(\ell+\sum_{k<\ell}\alpha_{k}\right)+\beta_{\ell}'\left(\ell+\sum_{k<\ell}\beta_{k}\right)+\gamma_{\ell}'\left(\ell+\sum_{k<\ell}\gamma_{k}\right)\right]\\
 & = & r\omega(m')+\sum_{k<\ell}(\alpha_{\ell}'\alpha_{k}+\beta_{\ell}'\beta_{k}+\gamma_{\ell}'\gamma_{k}).\end{eqnarray*}
Indeed, this follows from the fact that an end $E$ such that $\varphi(E)$
exits at $A_{\ell}$ is translated to $\psi(E)$ which exits at $A_{\ell+\alpha{}_{0}+\cdots+\alpha{}_{\ell-1}}$.
\begin{lem}
\label{lem:sigma=00003Dsum-of-delta}With the notation above, we have\[
\Sigma_{m'}(\nu)=\sum_{{\rm all\,\, vertices}\: V\:{\rm of}\: T}\delta_{\psi}(V).\]
\end{lem}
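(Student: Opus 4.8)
The plan is to combine Theorem~\ref{thm:sum-of-ends-of-broken-tree}, applied to the fractured immersion $\psi$ just constructed, with the two displayed computations preceding the statement. Recall that $\psi$ is obtained from the immersion $\varphi$ of $T$ by the inflation-of-$\nu$ construction, so all its fractures lie in $\triangle_{r+\omega(\nu)}$, and Theorem~\ref{thm:sum-of-ends-of-broken-tree} gives
\[
\sum_{\text{ends }E}\ell(E)=r'\,\omega(T)+\sum_{\text{vertices }V}\delta_\psi(V),
\]
where I must be careful that the ambient triangle is now $\triangle_{r'}$ with $r'=r+\omega(\nu)$, and $\omega(T)=\omega(m')$.

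Next I would substitute the already-established evaluation of the left-hand side, namely
\[
\sum_{\text{ends }E}\ell(E)=r'\,\omega(m')+\sum_{k<\ell}(\alpha'_\ell\alpha_k+\beta'_\ell\beta_k+\gamma'_\ell\gamma_k),
\]
where the $r'$ here comes from the fact that $\varphi(E)$ exits at rank $\ell$ in $\triangle_{r}$ but, after inflation, $\psi(E)$ exits $\triangle_{r+\omega(\nu)}$ at rank $\ell+\alpha_0+\cdots+\alpha_{\ell-1}$ (and the $\ell$ contributes $\ell+\sum_{k<\ell}\alpha_k$, whose sum over all ends splits as $\sum_\ell \ell(\alpha'_\ell+\beta'_\ell+\gamma'_\ell)=r\omega(m')$ plus the cross terms, and adding the $\omega(\nu)\omega(m')$ worth of extra units from passing to the larger triangle yields exactly $r'\omega(m')$). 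Equating the two expressions for $\sum_E \ell(E)$ and cancelling $r'\omega(m')$ on both sides leaves
\[
\sum_{k<\ell}(\alpha'_\ell\alpha_k+\beta'_\ell\beta_k+\gamma'_\ell\gamma_k)=\sum_{\text{vertices }V}\delta_\psi(V).
\]
Finally I invoke formula~(\ref{eq:sigma-no-n}), which says $\Sigma_{m'}(\nu)=\sum_{\ell<\ell'}(\alpha_\ell\alpha'_{\ell'}+\beta_\ell\beta'_{\ell'}+\gamma_\ell\gamma'_{\ell'})-\omega(\nu)\omega(m')$; but the balanced bookkeeping above already absorbed the $-\omega(\nu)\omega(m')$ term into the passage from $r$ to $r'$, so what remains matches the sum of $\delta_\psi(V)$, giving $\Sigma_{m'}(\nu)=\sum_V\delta_\psi(V)$.

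The one genuinely delicate point — the place where I expect to have to be most careful — is the bookkeeping of the ambient triangle size: Theorem~\ref{thm:sum-of-ends-of-broken-tree} is stated relative to whatever $\triangle_{r}$ contains the fractures, and here the fractures of $\psi$ live in $\triangle_{r+\omega(\nu)}$, not $\triangle_r$. So I must track consistently that the "$r$" in the theorem is $r+\omega(\nu)$, verify that the ranks $\ell(E)$ are computed in the inflated triangle (which is exactly the content of the observation that $\psi(E)$ exits at $A_{\ell+\alpha_0+\cdots+\alpha_{\ell-1}}$), and check that the constant $-\omega(\nu)\omega(m')$ appearing in $\Sigma_{m'}(\nu)$ is precisely the difference $r'\omega(m')-[\text{the }r\omega(m')\text{ coming from the }\ell\text{-terms}]$ minus nothing left over. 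Once this accounting is pinned down, the identity is just the cancellation described above; everything else has been done in the lemmas and displays already quoted.
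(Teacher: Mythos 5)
Your strategy is the same as the paper's: apply Theorem \ref{thm:sum-of-ends-of-broken-tree} to the fractured immersion $\psi$ inside the enlarged triangle $\triangle_{r+\omega(\nu)}$, evaluate $\sum_E\ell(E)$ via the translation of exit points under inflation, and compare with (\ref{eq:sigma-no-n}). But the bookkeeping goes wrong at exactly the point you flag as delicate. Since an end exiting at $A_\ell$ under $\varphi$ exits at $A_{\ell+\alpha_0+\cdots+\alpha_{\ell-1}}$ under $\psi$, the correct evaluation is
\[
\sum_{\text{ends }E}\ell(E)=\sum_{\ell}\Bigl[\alpha'_\ell\Bigl(\ell+\sum_{k<\ell}\alpha_k\Bigr)+\beta'_\ell\Bigl(\ell+\sum_{k<\ell}\beta_k\Bigr)+\gamma'_\ell\Bigl(\ell+\sum_{k<\ell}\gamma_k\Bigr)\Bigr]=r\,\omega(m')+\sum_{k<\ell}(\alpha'_\ell\alpha_k+\beta'_\ell\beta_k+\gamma'_\ell\gamma_k),
\]
with coefficient $r$, not $r'=r+\omega(\nu)$: the shifted exit ranks are already ranks in the larger triangle, so there are no ``extra $\omega(\nu)\omega(m')$ units from passing to the larger triangle'' to add on top. (Note also that $\omega(\nu)\omega(m')$ equals the full double sum $\sum_{k,\ell}(\alpha'_\ell\alpha_k+\cdots)$, not the restricted sum over $k<\ell$, so it cannot be identified with the cross terms.) Your evaluation therefore overcounts by $\omega(\nu)\omega(m')$, your intermediate identity $\sum_V\delta_\psi(V)=\sum_{k<\ell}(\alpha'_\ell\alpha_k+\cdots)$ is false whenever $\omega(\nu)\omega(m')\neq0$, and the claim that the $-\omega(\nu)\omega(m')$ in (\ref{eq:sigma-no-n}) has been ``absorbed'' is the same error made in the opposite direction. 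The two mistakes cancel, so your final formula is correct, but the derivation as written is not valid.

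The repair is immediate and is exactly the paper's argument: keep the evaluation with $r\omega(m')$, equate it with Theorem \ref{thm:sum-of-ends-of-broken-tree}, which gives $\sum_E\ell(E)=(r+\omega(\nu))\omega(m')+\sum_V\delta_\psi(V)$, and subtract to obtain $\sum_V\delta_\psi(V)=\sum_{k<\ell}(\alpha'_\ell\alpha_k+\beta'_\ell\beta_k+\gamma'_\ell\gamma_k)-\omega(\nu)\omega(m')$, which is precisely $\Sigma_{m'}(\nu)$ by (\ref{eq:sigma-no-n}); no further absorption or cancellation is needed.
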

\begin{proof}
Theorem \ref{thm:sum-of-ends-of-broken-tree} yields\[
\sum_{\text{all ends }E\text{ of }T}\ell(E)=(r+\omega(\nu))\omega(m')+\sum_{{\rm all\,\, vertices}\: V\:{\rm of}\: T}\delta_{\psi}(V).\]
Combining this with the identity preceding the statement, we obtain\[
\sum_{\text{all vertices }V\text{ of }T}\delta_{\psi}(V)=\sum_{k<\ell}(\alpha_{\ell}'\alpha{}_{k}+\beta_{\ell}'\beta{}_{k}+\gamma_{\ell}'\gamma{}_{k})-\omega(m')\omega(\nu),\]
and this is precisely the formula (\ref{eq:sigma-no-n}) for $\Sigma_{m'}(\nu)$.\end{proof}
\begin{thm}
\label{thm:sigma(m,m)}Assume that $\nu,m'\in\mathcal{M}_{r}$, and
$m'$ is a tree measure.
\begin{enumerate}
\item If the support of $m'$ is not contained in the support of $\nu$,
then $\Sigma_{m'}(\nu)\ge0$.
\item If $m'$ is not rigid, we also have $\Sigma_{m'}(m')\ge0$.
\item If $m'$ is an extremal rigid measure assigning unit density to its
root edges, we have $\Sigma_{m'}(m')=-1$.
\end{enumerate}
\end{thm}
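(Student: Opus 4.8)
The plan is to use Lemma~\ref{lem:sigma=00003Dsum-of-delta} in all three cases, reducing each statement to a claim about the sum of the fracture defects $\delta_\psi(V)$ of a suitable fractured immersion. The crucial observation is that the fractured immersion $\psi$ built from $\varphi$ and $\nu$ as in Section~4 has fractures occurring exactly at the vertices $V$ of $T$ whose $\varphi$-image is a branch point of $\nu$, or where an edge of $T$ crosses the support of $\nu$ transversally; away from the support of $\nu$, $\varphi$ and $\psi$ agree up to a global translation along each connected piece. So I would first analyze the \emph{local} contribution $\delta_\psi(V)$ at a single vertex $V$ of $T$ in terms of the local picture of $\varphi(T)$ and of $\nu$ near $\varphi(V)$.

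For part (1): if the support of $m'$ is not contained in that of $\nu$, I claim every $\delta_\psi(V)\ge 0$, whence the sum is $\ge0$. The idea is that the inflation procedure only ever pushes puzzle pieces apart in the clockwise-$60^\circ$ direction, so when an edge $\varphi(e)$ crosses the support of $\nu$ the translate $\psi(e)$ moves consistently ``to the right'' relative to the incoming edge; checking the sign conventions in the definitions of $\delta_\psi(V)$ at an order-2 and an order-3 vertex shows this yields $\delta_\psi(V)\ge0$ in every local configuration. Here I would have to be a little careful: the hypothesis that the support of $m'$ is not contained in the support of $\nu$ is what guarantees I may choose the root edge / orientation so that the orientations of $T$ are compatible with the clockwise inflation direction everywhere relevant — this is the point where the combinatorics of Lemma~\ref{lem:local-structure-of-skeletons} (local structure of rigid extremal measures) and the tree axioms (especially axiom (4)) get used.

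For part (2): take $\nu=m'$. Since $m'$ is a tree measure that is not rigid, it is not extremal, so by Theorem~\ref{thm:rigid-skeletons-are-trees} (or rather the decomposition theory quoted before it) its support properly contains the support of some extremal summand, and in particular the tree $T$ of $m'$ is not the ``universal cover'' tree of an extremal rigid measure; equivalently the support of $m'$ is not contained in the support of any single extremal piece, and one reduces to a situation where part (1)'s local analysis still gives $\delta_\psi(V)\ge0$ at every vertex. The honest statement I would prove is: if $m'$ is a non-rigid tree measure then the fractured immersion $\psi$ obtained from $\varphi$ and $\nu=m'$ still has all defects nonnegative, because the failure of rigidity forces the immersion $\varphi$ to have a ``spare'' end that keeps the clockwise-inflation sign from ever reversing. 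This will follow from the same local case-check as in (1), together with the observation that self-overlaps of $\varphi(T)$ only add nonnegative amounts to the relevant $\delta$'s.

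For part (3): this is the sharp case and I expect it to be the main obstacle. Here $m'$ is extremal and rigid with unit density on its root edge $e_0$, and $T$ is exactly the ``universal cover'' tree from the proof of Theorem~\ref{thm:rigid-skeletons-are-trees}, so $\varphi$ is essentially injective on the complement of the (finitely many) triple vertices and the immersion is as rigid as possible. Taking $\nu=m'$, I would argue that $\delta_\psi(V)=0$ at every vertex \emph{except} the single root vertex (the pair of endpoints of $e_0$, which are glued in $T$), where the fracture has defect exactly $-1$: the root edge $\varphi(e_0)=e_0$ gets attached to the white piece on its right, and since $m'(e_0)=1$ the translate moves by exactly one lattice unit, in the direction which the $\delta$ sign convention counts as $-1$ (the boundary of the relevant degenerate ``triangle'' of sidelength $1$ is oriented the ``wrong'' way). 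So $\sum_V \delta_\psi(V)=-1$ and Lemma~\ref{lem:sigma=00003Dsum-of-delta} gives $\Sigma_{m'}(m')=-1$. The hard part is verifying that \emph{all other} vertices contribute $0$: this requires showing that for a rigid extremal $m'$, when you inflate $\nu=m'$ and track $\varphi(T)$, the pieces on either side of every non-root edge of the support move by the \emph{same} translation vector, so no further fracture is created — which is exactly where the strong local rigidity from Lemma~\ref{lem:local-structure-of-skeletons} (only the four listed oriented local pictures occur) must be invoked, case by case through those four configurations, to see that the descendance orientation makes the inflation translations propagate consistently. I would also double-check the boundary convention about $A_0X_0$ versus $C_rZ_r$ mentioned before Theorem~\ref{thm:sum-of-ends-of-broken-tree}, since the $-1$ is a corner-sensitive count, and use the freedom to enlarge $\triangle_r$ so the root edge and all branch points sit in the interior.
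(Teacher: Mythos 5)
The fatal gap is in your part (2). Your plan is to take $\nu=m'$ and argue that the fractured immersion built from the inflation of $m'$ can be arranged to have $\delta_{\psi}(V)\ge 0$ at \emph{every} vertex. That is impossible: every edge of $T$ is mapped by $\varphi$ into the support of $m'=\nu$, so whichever edge $e_{0}$ (or source vertex) you orient away from, the fracture at the source contributes $-\nu(\varphi(e_{0}))=-m'(\varphi(e_{0}))<0$; this is exactly the content of Theorem \ref{thm:arbitrary-starting-edge}, $\Sigma_{m'}(\nu)+\nu(\varphi(e_{0}))=\sum_{V}\delta_{e_{0}}(V,\nu)$, and it is precisely why part (1) requires choosing $e_{0}$ with $\nu(\varphi(e_{0}))=0$ --- a choice available under the hypothesis of (1) but never available when $\nu=m'$. (Part (3) itself shows that for $\nu=m'$ rigid the defects sum to $-1$, so ``all defects nonnegative'' cannot be a consequence of the local sign conventions alone.) Your opening claim, ``since $m'$ is a tree measure that is not rigid, it is not extremal,'' is also false: the paper exhibits (the $r=13$ example at the end of Section 6) an extremal tree measure which is not rigid. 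The paper's proof of (2) is global and quite different: since $\Sigma_{m'}(\cdot)$ depends only on exit densities, non-rigidity provides a measure $\nu\ne m'$ with the same exit densities as $m'$, so $\Sigma_{m'}(m')=\Sigma_{m'}(\nu)$; writing $\nu=\sum_{j}m_{j}$ with $m_{j}$ extremal, at most one summand has support containing the support of $m'$, and by extremality that summand is $\kappa m'$ with $0\le\kappa<1$; applying part (1) to the remaining summands gives $\Sigma_{m'}(m')=\sum_{j}\Sigma_{m'}(m_{j})\ge\kappa\,\Sigma_{m'}(m')$, hence $\Sigma_{m'}(m')\ge0$. You need an argument of this kind; the purely local claim you propose is unavailable.

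Parts (1) and (3) are essentially the paper's arguments and are fine in outline, with two corrections. In (1) the hypothesis is used for something more specific than ``compatibility of orientations'': it guarantees an edge $e_{0}$ with $\nu(\varphi(e_{0}))=0$, which removes the negative source contribution; after that, no rigidity enters at all --- the defect at each vertex equals $\nu(V'X)$ for an explicit small edge $V'X$ adjacent to $\varphi(V)$, hence is $\ge0$ by positivity of $\nu$ (note $m'$ in (1) is an arbitrary tree measure, so Lemma \ref{lem:local-structure-of-skeletons} is not applicable there, contrary to what you invoke). In (3) your outline matches the paper: choosing $e_{0}$ so that $\varphi(e_{0})$ is a root edge (broken in half by a homothety so the source is an order-2 vertex between two unit-density root edges), Lemma \ref{lem:local-structure-of-skeletons} forces $m'(V'X)=0$ at every other vertex, the source contributes exactly $-m'(\varphi(e_{0}))=-1$, and Lemma \ref{lem:sigma=00003Dsum-of-delta} then gives $\Sigma_{m'}(m')=-1$.
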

\begin{proof}
Let $\varphi$ be an immersion of a tree $T$ such that $m'=m_{\varphi}$.
To prove (1), fix an edge $e_{0}$ such that $\varphi(e_{0})$ is
not contained in the support of $\nu$, and orient all the other edges
of $T$ \emph{away from} $e_{0}$. Construct a fractured immersion
$\psi$ using the above construction associated with the inflation
of $\nu$. It is easy to verify that in this case we have $\delta_{\psi}(V)\ge0$
for every vertex $V$ of $T$. Indeed, $\delta_{\psi}(V)$ can be
calculated explicitly in terms of the values of $\nu$ on one of the
edges adjacent to $\varphi(V)$. To see this, assume first that $V$
is of order two, $AV$ and $VB$ are the two adjacent edges, and they
are mapped by $\varphi$ to $A'V'$ and $V'B'$. These two edges are
shown below, with the arrows indicating their orientation, and the
dotted extensions are drawn to indicate the value of $\delta_{\psi}(V)$. 

\begin{singlespace}
\begin{center}\includegraphics{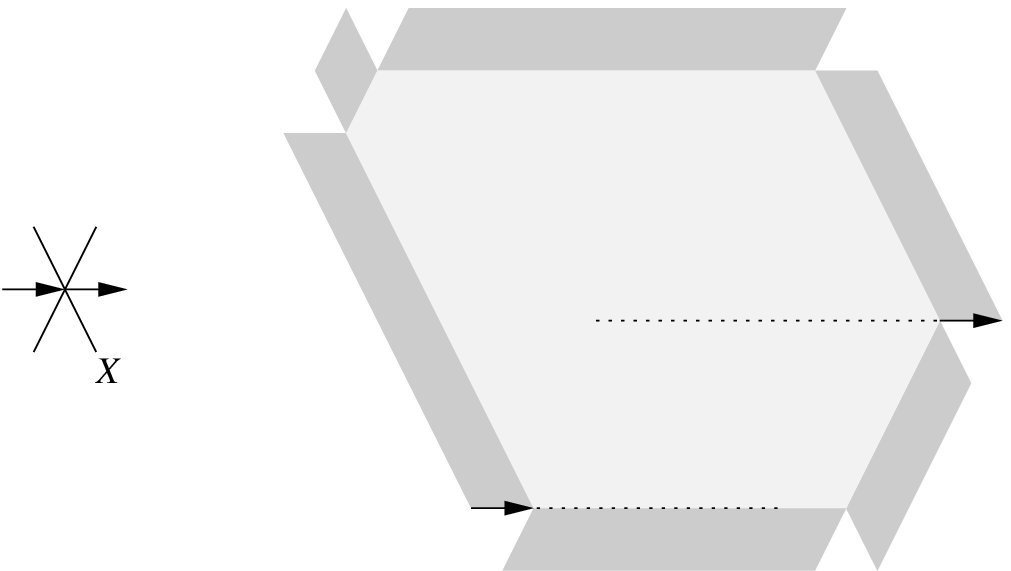}\end{center}
\end{singlespace}

\noindent Clearly, we have $\delta_{\psi}(V)=\nu(V'X)$, with $X$
as in the figure, i.e. on the right side of $A'V'$, and $\varangle XV'B'=60^{\circ}.$
If $V$ has order three, let $AV,BV,CV$ be the three adjacent edges,
with $AV$ oriented toward $V$. Assume that $\varphi(AV)=A'V'$,
and $X$ is symmetric to $A'$ relative to $V'$. We have again $\delta_{\psi}(V)=\nu(V'X)$.

\begin{center}\includegraphics{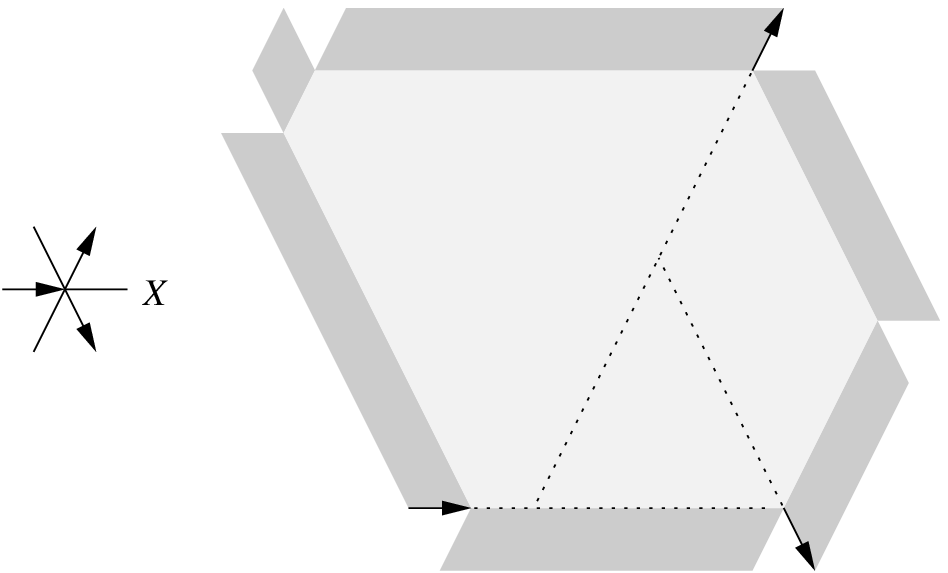}\end{center}

\noindent Assertion (1) follows now from Lemma \ref{lem:sigma=00003Dsum-of-delta}.
(In both illustrations we assumed that $\nu$ assigns nonzero densities
to all six edges adjacent to $V'$. More precisely, these densities
were taken to be $1,7,4,3,5$ and $6$ in clockwise order.)

Assume next that $m'$ is not rigid, and choose a different measure
$\nu$ with the same exit densities. Then $\nu$ can be written as
a sum of distinct extreme measures, say $\nu=\sum_{j}m_{j}$. If the
support of $m_{j}$ contains the support of $m'$, then $m_{j}$ is
a positive multiple of $m'$ by extremality. Thus there is at most
one $j$ such that the support of $m_{j}$ contains the support of
$m'$. Assume for definiteness that $m_{1}=\kappa m'$, where $0\le\kappa<1$.
Then part (1) of the theorem implies that $\Sigma_{m'}(m_{j})\ge0$
for $j\ne1$, hence \[
\Sigma_{m'}(m')=\Sigma_{m'}(\nu)=\sum_{j}\Sigma_{m'}(m_{j})\ge\Sigma_{m'}(m_{1})=\kappa\Sigma_{m'}(m'),\]
and therefore $\Sigma_{m'}(m')\ge0$, as claimed. 

Finally, assume that $m'$ is rigid, and choose an edge $e_{0}$ such
that $\varphi(e_{0})$ is a root edge for $\nu=m'$ contained in $\triangle_{r}$.
Orient the other edges $T$ away from $e_{0}$, and also give $e_{0}$
some orientation, say it is oriented away from one of its endpoints
$V_{0}$. In this case we have $\delta_{\psi}(V_{0})=-1$ and $\delta_{\psi}(V)=0$
for all other vertices. To verify this fact one must observe that
in the pictures above we must have $m'(V'X)=0$ because of the rigidity
of $m'$. This follows from Lemma \ref{lem:local-structure-of-skeletons}.
The only exception is the orientation at the point $V_{0}$ which
produces a nonzero $\delta_{\psi}(V_{0})$. To calculate the value
of $\delta_{\psi}(V_{0})$, we will further assume that $V_{0}$ is
a vertex of order 2 and both edges $A_{0}V_{0,}V_{0}B_{0}$ adjacent
to $V_{0}$ are mapped by $\varphi$ to root edges of $m'$. This
can be achieved by applying a homothety, as seen in the introduction.
Assuming, for instance, that $\varphi(A_{0})=A$, $\varphi(B_{0})=B$
and $\varphi(V_{0})=V$, we have $m'(AV)=m'(VB)=1$. If we orient
$A_{0}V_{0}$ and $B_{0}V_{0}$ away from $V_{0}$, the inflation
process looks as follows:

\begin{center}\includegraphics{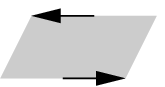}\end{center}

\noindent The width of the dark gray parallelogram is $m'(AV)=1$,
hence $\delta_{\psi}(V_{0})=-1$. The theorem follows.\end{proof}
\begin{cor}
\label{cor:m-pm}Assume that $m,m'\in\mathcal{M}_{r}$ and $m'$ is
a rigid extreme tree measure. If $\Sigma_{m'}(m)=-p<0$ then $pm'\le m$
and $c_{m-pm'}=c_{m}$.\end{cor}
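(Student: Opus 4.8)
The plan is to establish the two claims $pm' \le m$ and $c_{m-pm'}=c_m$ as a consequence of the machinery built up in Theorem \ref{thm:sigma(m,m)} together with the extremal decomposition of $m$. First I would invoke the fact, recalled in Section 2, that $\mathcal{M}_r$ is a convex polyhedral cone, so $m$ can be written as a sum of extremal measures, say $m = \sum_j m_j$ (not assumed distinct or rigid at this stage). By bilinearity of $\Sigma$, which is clear from the formula \eqref{eq:sigma-no-n} since $\Sigma_{m'}(\cdot)$ is linear in the exit densities of its argument, we have $\Sigma_{m'}(m) = \sum_j \Sigma_{m'}(m_j)$. Now apply Theorem \ref{thm:sigma(m,m)}(1): for every extremal summand $m_j$ whose support does \emph{not} contain the support of $m'$ we get $\Sigma_{m'}(m_j) \ge 0$. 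Since $m'$ is extremal, any $m_j$ whose support \emph{does} contain the support of $m'$ must be a positive multiple of $m'$; collecting those together we may write $m = \kappa m' + m''$ where $\kappa \ge 0$, $m'' \in \mathcal{M}_r$ (or $m'' = 0$), and the support of $m'$ is not contained in the support of any extremal summand of $m''$. Then $\Sigma_{m'}(m'') \ge 0$ by part (1) applied summand-by-summand, and $\Sigma_{m'}(m') = -1$ by part (3) since $m'$ is rigid extremal with unit density on its root edges. Hence $-p = \Sigma_{m'}(m) = -\kappa + \Sigma_{m'}(m'') \ge -\kappa$, so $\kappa \ge p$. In particular $\kappa > 0$, so $pm' \le \kappa m' \le m$, which is the first claim.

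For the second claim, set $\mu = m - pm'$; we have just shown $\mu \in \mathcal{M}_r$, and by construction $\mu = (\kappa-p)m' + m''$. I would compute $\Sigma_{m'}(\mu) = \Sigma_{m'}(m) - p\,\Sigma_{m'}(m') = -p - p(-1) = 0$. Now the key point is that subtracting $pm'$ sets up a bijection between the extremal decompositions of $m$ and of $\mu$. Concretely, I would argue that any extremal summand of $\mu$ other than multiples of $m'$ is also an extremal summand of $m$ and vice versa: a measure $\nu' \le \mu$ satisfies $\nu' \le m$, and conversely if $\nu' \le m$ is extremal with support not containing that of $m'$, then $\Sigma_{m'}(\nu') \ge 0$ forces (by the same accounting as above, using $\Sigma_{m'}(\mu) = 0$) that $\nu' \le \mu$ — otherwise the $m'$-component of $m - \nu'$ would be strictly less than $p$ and we could not subtract $pm'$. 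Since $c_m$ counts the integer-density measures with the prescribed exit densities (equivalently, by \cite[Corollary 3.6]{bcdlt} together with rigidity considerations, the number of distinct extremal decompositions), and the operation $m \mapsto m - pm'$ is a bijection on this set of decompositions — it changes only the coefficient of the fixed extremal measure $m'$, by the fixed amount $p$, and by the estimate above every competing decomposition of $m$ has coefficient at least $p$ on $m'$ — we conclude $c_{m-pm'} = c_m$.

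The main obstacle I anticipate is the last step: showing that \emph{every} measure with the same exit densities as $m$ (not just the extremal summands appearing in one chosen decomposition) contains $pm'$ as a submeasure, so that the map $\nu \mapsto \nu - pm'$ is a well-defined injection from $\{\nu \in \mathcal{M}_r : \nu \text{ has the exit densities of } m\}$ into $\{\nu \in \mathcal{M}_r : \nu \text{ has the exit densities of } \mu\}$, and is surjective. Injectivity is formal once it is well-defined, and surjectivity follows by adding $pm'$ back; so the crux is the uniform lower bound $pm' \le \nu$ for every such $\nu$. This should follow by running the $\Sigma_{m'}$ argument of the first paragraph with $\nu$ in place of $m$: since $\nu$ has the same exit densities as $m$, it has $\Sigma_{m'}(\nu) = \Sigma_{m'}(m) = -p$ (recall $\Sigma_{m'}$ depends only on exit densities), and then the decomposition-plus-Theorem \ref{thm:sigma(m,m)}(1),(3) argument gives $\kappa_\nu \ge p$ for the coefficient of $m'$ in any extremal decomposition of $\nu$, hence $pm' \le \nu$. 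One subtlety to check carefully is that Theorem \ref{thm:sigma(m,m)}(3) is stated only for $m'$ \emph{rigid} extremal assigning unit density to its root edges — which is exactly our hypothesis — and that part (1) is applied only to summands whose support does not contain that of $m'$, both of which are guaranteed by the extremality of $m'$ and the sorting step.
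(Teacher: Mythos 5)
Your proposal is correct and follows essentially the same route as the paper: the bound $pm'\le m$ comes from Theorem \ref{thm:sigma(m,m)}(1) and (3) plus linearity of $\Sigma_{m'}$, and the identity $c_{m-pm'}=c_{m}$ comes from noting that $\Sigma_{m'}$ depends only on exit densities, so \emph{every} measure with the exit densities of $m$ dominates $pm'$, yielding the bijection $\nu\leftrightarrow\nu-pm'$ — exactly the crux you isolate and resolve in your final paragraph. The only cosmetic difference is that the paper gets $pm'\le m$ by applying part (1) directly to $m-sm'$, where $s$ is maximal with $sm'\le m$, instead of passing through an extremal decomposition of $m$ summand by summand.
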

\begin{proof}
Let $s$ be the largest number such that $sm'\le m$. Then the support
of $m-sm'$ does not contain the support of $m'$, and therefore $\Sigma_{m'}(m-sm')\ge0$
by Theorem \ref{thm:sigma(m,m)}(1). Thus \[
-p=\Sigma_{m'}(m)=\Sigma_{m'}(m-sm')+s\Sigma_{m'}(m')\ge-s,\]
so that $s\ge p$. If $m''$ is any other measure with the same exit
densities as $m$, it follows that $pm'\le m''$ as well, and the
exit densities for $m-pm'$ and $m''-pm'$ are the same. This yields
a bijection $m''\leftrightarrow m''-pm'$ between measures with the
exit densities of $m$ and measures with the exit densities of $m-pm'$.
\end{proof}
Corollary \ref{cor:m-pm} extends \cite[Proposition 3.10]{CoDy-reduction}
which, in our terminology, states that $c_{m-m'}=c_{m}$ if $m'$
is a tree measure with $\omega(m')=1$, and $\Sigma_{m'}(m)<0$. That
result was stated in terms of the sets $I,J,K$, and the proof proceeds
through a very explicit construction of Littlewood-Richardson tableaux.

We can now give a general method for the construction of rigid measures,
thus completing \cite[Theorem 3.8]{bcdlt}. First, we need to review
that result. Let $m\in\mathcal{M}_{r}$ be a rigid measure, and let
$m_{1},m_{2}\in\mathcal{M}_{r}$ be two tree measures with support
contained in the support of $m$. The relation $m_{1}\prec_{0}m_{2}$
was defined in \cite{bcdlt} as follows: there exist four small edges
$AX,XB,CX$ and $XD$ such that
\begin{enumerate}
\item $AX$ and $XB$ are collinear edges in the support of $m_{1}$,
\item $CX$ and $XD$ are collinear edges in the support of $m_{2}$, and
\item $XB$ is $60^{\circ}$ clockwise from $XD$.
\end{enumerate}
It was shown in \cite{bcdlt} that `$\prec_{0}$' can be extended
to an order relation on the set of extremal rigid measures with support
contained in the support of $m$. As noted earlier, each extremal
rigid measure is a positive multiple of a tree measure. The following
result allows us to extend `$\prec_{0}$' to the collection of all
rigid tree measures; this extension is no longer contained in an order
relation.
\begin{lem}
\label{lem:precedence-vs-Sigma}Let $m$ be a rigid measure, and let
$m_{1},m_{2}$ be extremal measures with support contained in the
support of $m$. We have $m_{1}\prec_{0}m_{2}$ if and only if $\Sigma_{m_{2}}(m_{1})>0$. \end{lem}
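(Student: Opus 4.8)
The plan is to exploit the computational identity established in Lemma~\ref{lem:sigma=00003Dsum-of-delta}, which says that for a tree measure $m_2=m_\varphi$ (with immersion $\varphi$ of a tree $T$) and any $\nu\in\mathcal{M}_r$ whose support contains the support of $m_2$, one has $\Sigma_{m_2}(m_1)=\sum_V \delta_\psi(V)$, where $\psi$ is the fractured immersion obtained by inflating $\nu=m_1$ and pushing $\varphi$ through. The point is that $\Sigma_{m_2}(m_1)$ is computed as a sum of \emph{nonnegative} local contributions $\delta_\psi(V)=m_1(V'X)$ once all edges of $T$ are oriented away from a root edge of $m_2$ (as in the proof of Theorem~\ref{thm:sigma(m,m)}(1)), so $\Sigma_{m_2}(m_1)>0$ is equivalent to $\delta_\psi(V)>0$ for at least one vertex $V$, i.e.\ to the existence of a vertex at which the inflation of $m_1$ genuinely fractures $\psi$.

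First I would set up the orientation carefully: orient all edges of the tree underlying $m_2$ away from a fixed root edge of $m_2$ (breaking an edge in half first, via homothety, so the root edge is a genuine edge of $T$ and $\delta$ at its base vertex does not interfere), so that each $\delta_\psi(V)$ equals the value of $m_1$ on a specific small edge $V'X$ adjacent to $\varphi(V)$, as described by the two pictures in the proof of Theorem~\ref{thm:sigma(m,m)}. Then I would unwind the definition of $m_1\prec_0 m_2$: it asserts the existence of collinear edges $AX,XB$ in the support of $m_1$ and collinear edges $CX,XD$ in the support of $m_2$ with $XB$ sitting $60^\circ$ clockwise from $XD$. Tracing back the orientation conventions in the inflation construction of Section~4 (each edge in the support of $\nu$ is attached to the white piece on its right), the configuration $CX,XD$ in the support of $m_2$ corresponds to a degree-$2$ vertex $V$ of $T$ with $\varphi(V)=X$ whose two images straddle $X$, and the edge $V'X$ at which $\delta_\psi(V)$ is read off is precisely an edge lying $60^\circ$ clockwise from $XD$; the collinearity and clockwise-$60^\circ$ condition in the definition of $\prec_0$ is exactly the statement that this edge carries positive $m_1$-mass, i.e.\ $\delta_\psi(V)>0$. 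Conversely, if some $\delta_\psi(V)>0$, the small edge $V'X$ witnessing it, together with the two collinear $m_2$-edges meeting at $\varphi(V)$, furnishes the four-edge configuration required by $\prec_0$.

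So the body of the argument is a dictionary: (i) $m_1\prec_0 m_2$ $\Longleftrightarrow$ there is a vertex $V$ of $T$ (necessarily of order $2$, once edges are oriented away from the root of $m_2$, by Lemma~\ref{lem:local-structure-of-skeletons}) at which $\varphi$ is a genuine fold and the ``outgoing wedge'' at $X=\varphi(V)$ lies $60^\circ$ clockwise from one of the $m_2$-edges there; (ii) that condition $\Longleftrightarrow$ $\delta_\psi(V)=m_1(V'X)>0$ for that $V$; (iii) that $\Longleftrightarrow$ $\sum_V\delta_\psi(V)>0$, since all terms are $\ge 0$; (iv) that $\Longleftrightarrow$ $\Sigma_{m_2}(m_1)>0$ by Lemma~\ref{lem:sigma=00003Dsum-of-delta}. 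One also needs to observe at the outset that the hypothesis of Lemma~\ref{lem:sigma=00003Dsum-of-delta} (support of $m_2$ inside support of $\nu=m_1$) may fail in general; but if it fails then $m_1\not\prec_0 m_2$ is impossible to establish directly — instead one notes that $\prec_0$ only involves edges in the supports of $m_1$ and $m_2$, and if supp $m_2\not\subset$ supp $m_1$ one argues that neither side holds, or reduces to the sub-configuration where the relevant edges do lie in both supports.

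The main obstacle I anticipate is step (i)–(ii): making the correspondence between the purely combinatorial relation $\prec_0$ (four small edges with a prescribed angular relation) and the geometry of the fractured immersion $\psi$ completely precise, including checking that the orientation ``away from the root edge of $m_2$'' forces the witnessing vertex of $T$ to be of order $2$ with its two $\varphi$-images on opposite sides of $X$, rather than some degenerate or order-$3$ configuration, and verifying the clockwise-vs-counterclockwise bookkeeping so that ``$60^\circ$ clockwise'' in the definition of $\prec_0$ matches the side on which $\delta_\psi$ picks up the edge $V'X$. This is essentially a careful reading of the inflation picture and the $\delta_\psi$ sign conventions from Section~3, but it is where all the care is needed; the rest is formal manipulation of the already-established identity $\Sigma_{m_2}(m_1)=\sum_V\delta_\psi(V)$ together with nonnegativity of its terms.
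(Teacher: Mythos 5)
Your overall strategy is the paper's: represent $\Sigma_{m_2}(m_1)$ as a sum of local contributions $\delta_\psi(V)$ via Lemma \ref{lem:sigma=00003Dsum-of-delta} and translate positivity of a contribution into the local configuration defining $\prec_0$. But two of your steps have genuine gaps. First, the nonnegativity of \emph{all} the contributions is not a consequence of orienting away from an arbitrary root edge of $m_2$: it requires choosing $e_0$ so that $\varphi(e_0)$ is a root edge of $m_2$ \emph{not contained in the support of} $m_1$. If $\varphi(e_0)$ carries $m_1$-mass, the base vertex (or the midpoint, after halving the edge) contributes $-m_1(\varphi(e_0))<0$; this is exactly the computation in part (3) of Theorem \ref{thm:sigma(m,m)} and in Theorem \ref{thm:arbitrary-starting-edge}, so "breaking an edge in half so that $\delta$ at its base vertex does not interfere" does not neutralize it, and your equivalence ``$\Sigma_{m_2}(m_1)>0$ iff some $\delta_\psi(V)>0$'' collapses without the corrected choice of $e_0$. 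Relatedly, Lemma \ref{lem:sigma=00003Dsum-of-delta} has no hypothesis that the support of $m_2$ be contained in that of $\nu$ (edges with $\nu(e)=0$ simply travel with their white piece), so your contingency paragraph about that ``hypothesis'' — and the suggestion that $\prec_0$ would then fail on both sides — is off the mark: distinct extremal summands typically have non-nested supports and $\prec_0$ may well hold.

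Second, your dictionary in the converse direction produces only three of the four edges required by $\prec_0$: $\delta_\psi(V)>0$ at an order-$2$ vertex gives the edge $XB$ ($60^\circ$ clockwise from $XD$) in the support of $m_1$, but the definition also demands that the collinear edge $AX$ opposite $XB$ lie in the support of $m_1$, and this is not automatic ($m_1$ could a priori branch at $X$). The paper closes this using the rigidity of the ambient measure $m$: since $CX\to_m XD$ for the tree edges of $m_2$, the edge $60^\circ$ counterclockwise from $XD$ has $m$-mass zero, hence $m_1$-mass zero, and the balance condition for $m_1$ at $X$ then yields $m_1(AX)\ge m_1(XB)>0$. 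Likewise, your claim that order-$3$ vertices contribute nothing ``by Lemma \ref{lem:local-structure-of-skeletons}'' is not sufficient as stated: that lemma constrains the support of $m_2$, whereas what is needed is that $m_1$ vanishes on the edge opposite the incoming one, and this again comes from the descendance relation $A_1X\to_m XA_2$ for the rigid measure $m$, which forces $m(XB)=0$ and hence $m_1(XB)=0$ because the support of $m_1$ lies in that of $m$. In short, your outline never actually uses the hypotheses that $m$ is rigid and that both supports are contained in the support of $m$; these are precisely what make the local bookkeeping work, and once they are brought in (together with the corrected choice of base edge) your argument becomes the paper's proof.
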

\begin{proof}
Observe that $m_{1}$ and $m_{2}$ are also rigid. Let $\varphi$
be an immersion of some tree $T$ such that $m_{2}=m_{\varphi}$;
such an immersion exists by Theorem \ref{thm:rigid-skeletons-are-trees}.
Orient all the edges of $T$ away from some edge $e_{0}$ such that
$\varphi(e_{0})$ is a root edge for $m_{2}$ not contained in the
support of $m_{1}$. Assume first that $m_{1}\prec_{0}m_{2}$, and
the small edges $AX,XB,CX,XD$ satisfy conditions (1-3) above. We
may assume that $CX=\varphi(e_{1}),XD=\varphi(e_{2})$, where $e_{1}$
and $e_{2}$ are adjacent edges, and $e_{1}$ is oriented toward $e_{2}$.
The proof of Theorem \ref{thm:sigma(m,m)} implies that $\Sigma_{m_{2}}(m_{1})\ge m_{1}(XB)>0$. 

Conversely, assume that $\Sigma_{m_{2}}(m_{1})>0$. Let $e_{1},e_{2},e_{3}$
be three edges of $T$ adjacent to a vertex $V$, and assume that
$e_{1}$ is oriented toward $V$. These edges are mapped by $\varphi$
to $A_{j}X$, $j=1,2,3$, and we must have $A_{1}X\to_{m}XA_{2}$
and $A_{1}X\to_{m}XA_{3}$. It follows that the edge $XB$ opposite
$A_{1}X$ satisfies $m(XB)=0$, and therefore $m_{1}(XB)=0$, so that
this vertex $V$ contributes nothing to $\Sigma_{m_{2}}(m_{1})$.
We conclude that there must exist some vertex $V$ of order 2 which
contributes to $\Sigma_{m_{2}}(m_{1})$. Let $e_{1},e_{2}$ be the
two edges adjacent to $V$, and assume that $e_{1}$ is oriented toward
$V$. Then $\varphi$ maps these two edges to collinear edges $CX,XD$
so that $CX\to_{m}XD$. The fact that $V$ contributes to $\Sigma_{m_{2}}(m_{1})$
means simply that the edge $XB$ which is $60^{\circ}$ clockwise
from $XD$ is in the support of $m_{1}$. We claim that the edge $AX$
opposite $XB$ is also in the support of $m_{1}$. Indeed, the fact
that $CX\to_{m}XD$ implies that the edge $XB'$ which is $60^{\circ}$
counterclockwise from $XD$ is not in the support of $m$, hence not
in the support of $m_{1}$. The balance condition for $m_{1}$ implies
that $m_{1}(AX)>0$. Thus the vertices $AX,XB,CX,XD$ witness the
fact that $m_{1}\prec_{0}m_{2}$.
\end{proof}
Corollary $3.6$ of \cite{bcdlt} allows us to write any rigid measure
$m\in\mathcal{M}_{r}$ under the form \[
m=\sum_{j=1}^{n}p_{j}m_{j},\]
where $p_{j}>0$, and the $m_{j}$ are distinct extremal tree measures.
Moreover, Theorem 3.8 of that paper allows us to arrange the terms
of this sum in such a way that $m_{i}\prec_{0}m_{j}$ implies that
$i\le j$. According to Lemma \ref{lem:precedence-vs-Sigma}, $m_{i}\prec_{0}m_{j}$
is equivalent to $\Sigma_{m_{j}}(m_{i})>0$ for these measures, Thus,
the following result can be viewed as a converse of \cite[Corollary 3.6]{bcdlt}.
\begin{cor}
\label{cor:rigid=00003DnoCycles}Let $m_{1},m_{2},\dots,m_{n}$ be
extremal, rigid tree measures such that $\Sigma_{m_{i}}(m_{j})=0$
for $i<j$. For every $p_{1},p_{2},\dots,p_{n}>0$, the measure $m=\sum_{j=1}^{n}p_{j}m_{j}$
is rigid.\end{cor}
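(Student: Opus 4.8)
The plan is to establish rigidity of $m = \sum_{j=1}^n p_j m_j$ by induction on $n$, the base case $n=1$ being the rigidity of the extremal tree measure $m_1$, which is given. For the inductive step, suppose $m'' \in \mathcal{M}_r$ is any measure with the same exit densities as $m$; we must show $m'' = m$. The strategy is to peel off the last summand $m_n$: I will argue that $p_n m_n \le m''$, so that $m'' - p_n m_n$ has the same exit densities as $\sum_{j=1}^{n-1} p_j m_j$, which is rigid by the inductive hypothesis, forcing $m'' - p_n m_n = \sum_{j=1}^{n-1}p_j m_j$ and hence $m'' = m$.

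**The key step** is to show $p_n m_n \le m''$. For this I would compute $\Sigma_{m_n}(m'')$ in two ways. On one hand, since $\Sigma_{m_n}(\cdot)$ depends only on exit densities and $m''$ has the same exit densities as $m$, we have $\Sigma_{m_n}(m'') = \Sigma_{m_n}(m) = \sum_{j=1}^n p_j \Sigma_{m_n}(m_j)$. Now $\Sigma_{m_n}(m_j) = 0$ for $j < n$ by hypothesis, and $\Sigma_{m_n}(m_n) = -1$ by Theorem \ref{thm:sigma(m,m)}(3) (each $m_j$, being an extremal rigid tree measure, assigns unit density to its root edges, or can be normalized to do so — and $\Sigma$ is linear so the normalization factor is absorbed into $p_n$). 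Hence $\Sigma_{m_n}(m'') = -p_n$. On the other hand, let $s \ge 0$ be the largest number with $s m_n \le m''$; then the support of $m'' - s m_n$ does not contain the support of $m_n$, so $\Sigma_{m_n}(m'' - s m_n) \ge 0$ by Theorem \ref{thm:sigma(m,m)}(1), giving $-p_n = \Sigma_{m_n}(m'') = \Sigma_{m_n}(m'' - s m_n) + s\,\Sigma_{m_n}(m_n) \ge -s$, so $s \ge p_n$ and therefore $p_n m_n \le m''$. This is exactly the argument already used in the proof of Corollary \ref{cor:m-pm}, now applied with $m_n$ in place of $m'$ and the crucial input that $\Sigma_{m_n}(m) = -p_n$ thanks to the vanishing hypotheses $\Sigma_{m_n}(m_j) = 0$ for $j < n$.

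**Once $p_n m_n \le m''$ is known**, set $\tilde m'' = m'' - p_n m_n$ and $\tilde m = \sum_{j=1}^{n-1} p_j m_j$. Both lie in $\mathcal{M}_r$ (subtracting a measure that is dominated leaves a nonnegative measure satisfying the balance condition, and the branch points stay in $\triangle_r$), and they have the same exit densities because subtracting $p_n m_n$ changes the exit densities of $m''$ and of $m$ in the same way. The measures $m_1, \dots, m_{n-1}$ still satisfy the hypothesis $\Sigma_{m_i}(m_j) = 0$ for $i < j$, so by the inductive hypothesis $\tilde m$ is rigid, whence $\tilde m'' = \tilde m$ and $m'' = m$. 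Since $m''$ was an arbitrary measure with the exit densities of $m$, this shows $c_m = 1$, i.e. $m$ is rigid.

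**The main obstacle** I anticipate is a bookkeeping subtlety in the normalization: Theorem \ref{thm:sigma(m,m)}(3) is stated for an extremal rigid tree measure \emph{assigning unit density to its root edges}, whereas the $m_j$ in the statement of the corollary are merely "extremal, rigid tree measures" with arbitrary (but fixed) normalization, and the coefficients $p_j > 0$ are arbitrary positive reals. One must check that $\Sigma_{m_n}(m_n)$ is a negative number — specifically that $\Sigma_{m_n}(q\, m_n^{\mathrm{unit}}) = -q$ where $m_n = q\, m_n^{\mathrm{unit}}$ — and feed the factor $q$ into the effective coefficient; since $\Sigma$ is bilinear this is routine, but it is the place where one could slip. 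A secondary point to verify carefully is that $\tilde m'' = m'' - p_n m_n$ genuinely has \emph{at least one branch point} (so that it lies in $\mathcal{M}_r$ rather than being a degenerate parallel-line measure); this follows because it has the same exit densities as the rigid measure $\tilde m \ne 0$, which does have a branch point. Neither of these is deep, so the proof is essentially the one-line induction above built on Theorem \ref{thm:sigma(m,m)} and the already-established technique of Corollary \ref{cor:m-pm}.
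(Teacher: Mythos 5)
There is a genuine gap: your inductive step peels off the \emph{last} summand $m_{n}$, and the crucial vanishing you invoke, ``$\Sigma_{m_{n}}(m_{j})=0$ for $j<n$ by hypothesis,'' is not what the hypothesis says. The hypothesis is $\Sigma_{m_{i}}(m_{j})=0$ for $i<j$, i.e.\ the subscript measure comes \emph{earlier} than the argument; what you need is the vanishing with the indices reversed, and $\Sigma$ is not symmetric (indeed $\Sigma_{m'}(m)+\Sigma_{m}(m')=\omega(m)\omega(m')-\sum_{\ell}(\alpha_{\ell}\alpha'_{\ell}+\beta_{\ell}\beta'_{\ell}+\gamma_{\ell}\gamma'_{\ell})$, which is in general nonzero). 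A concrete failure: in $\mathcal{M}_{3}$, take $m_{1}=\mu_{1}$ and $m_{2}=\nu_{1}$ from the list in Section 6 of the paper. Then $\Sigma_{m_{1}}(m_{2})=\Sigma_{\mu_{1}}(\nu_{1})=0$, so the hypothesis of the corollary holds, but $\Sigma_{m_{2}}(m_{1})=\Sigma_{\nu_{1}}(\mu_{1})=1$, and $\Sigma_{m_{2}}(m)=p_{1}-p_{2}$, which is $\ge 0$ whenever $p_{1}\ge p_{2}$. So your computation $\Sigma_{m_{n}}(m'')=-p_{n}$ is false in general, and the key step $p_{n}m_{n}\le m''$ cannot be obtained this way.

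The repair is exactly the paper's (very short) argument: peel off the \emph{first} summand. Since $\Sigma_{m_{1}}(m_{j})=0$ for $j>1$ by hypothesis and $\Sigma_{m_{1}}(m_{1})=-1$ by Theorem \ref{thm:sigma(m,m)}(3), one gets $\Sigma_{m_{1}}(m)=-p_{1}<0$, so Corollary \ref{cor:m-pm} gives directly $c_{m}=c_{m-p_{1}m_{1}}$, and $m-p_{1}m_{1}=\sum_{j=2}^{n}p_{j}m_{j}$ is rigid by the inductive hypothesis (the measures $m_{2},\dots,m_{n}$ still satisfy the hypothesis), whence $c_{m}=1$. Your re-derivation of the $sm_{n}\le m''$ argument inside the proof is unnecessary once you cite Corollary \ref{cor:m-pm}, and your normalization worry (unit density on root edges so that $\Sigma_{m_{1}}(m_{1})=-1$) is a legitimate but minor point that the paper also absorbs implicitly; the index reversal is the real error.
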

\begin{proof}
We proceed by induction, observing that the result is trivial for
$n=1$. For the inductive step, the hypothesis implies $\Sigma_{m_{1}}(m)=-p_{1}$,
and therefore $c_{m}=c_{m-p_{1}m_{1}}$ by Corollary \ref{cor:m-pm}.
\end{proof}

\section{Mending Fractured Immersions}

We will analyze in more detail the main result of the preceding section.
This analysis is a necessary preliminary for the results in Section
\ref{sec:Reduction-of-the-intersection-problem}. Let us fix a tree
$T$ and an immersion $\varphi$ of $T$ which maps all the triple
vertices of $T$ to $\triangle_{r}$. Let $m'=m_{\varphi}$ be the
corresponding measure in $\mathcal{M}_{r}$, and let $\nu\in\mathcal{M}_{r}$
be another measure. Fix for the moment an edge $e_{0}$ in $T$, and
orient all the other edges of $T$ away from $e_{0}$. We define for
every vertex $V$ of $T$ a number $\delta_{e_{0}}(V,\nu)$. Assume
first that $V$ has order 2 and the corresponding edges are $AV,VB$,
oriented toward $B$. Setting $A'=\varphi(A),V'=\varphi(V),B'=\varphi(B)$,
we set\[
\delta_{e_{0}}(V,\nu)=\nu(V'X),\]
 where $X$ is on the right side of $A'V'$, and $\varangle XV'B'=60^{\circ}$.
On the other hand, if $V$ has order 3 and the corresponding edges
are $AV,BV,CV$, with $AV$ oriented toward $V$, then \[
\delta_{e_{0}}(V,\nu)=\nu(V'X),\]
 where $A'=\varphi(A)$, $V'=\varphi(V)$, and $X$ are collinear.
When $V$ is one of the endpoints of $e_{0}$, we orient $e_{0}$
toward that endpoint in this definition. Theorem \ref{thm:sigma(m,m)}
can now be given a more precise form.
\begin{thm}
\label{thm:arbitrary-starting-edge}With $\nu$ and $m'$ as above,
we have\[
\Sigma_{m'}(\nu)+\nu(\varphi(e_{0}))=\sum_{V}\delta_{e_{0}}(V,\nu).\]
\end{thm}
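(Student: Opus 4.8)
The plan is to revisit the proof of Theorem~\ref{thm:sigma(m,m)}, but this time track carefully the role of the choice of starting edge $e_0$. In the construction preceding Lemma~\ref{lem:sigma=00003Dsum-of-delta}, a fractured immersion $\psi$ is obtained from $\varphi$ and the inflation of $\nu$ by orienting all edges of $T$ away from a chosen edge and gluing each $\varphi(e)$ to the white puzzle piece on its right. The quantity $\delta_\psi(V)$ was computed there (for the orientation away from $e_0$) to be exactly $\nu(V'X)$, with $X$ as described in the order-2 and order-3 pictures of that proof; comparing with the definition of $\delta_{e_0}(V,\nu)$ given just before the present theorem, we see that $\delta_{e_0}(V,\nu) = \delta_\psi(V)$ for every vertex $V$ \emph{except possibly the endpoints of $e_0$}, where the convention on orienting $e_0$ matters. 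So the main task is a bookkeeping one: understand the discrepancy at the endpoints of $e_0$.

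First I would isolate the edge $e_0 = V_0 W_0$ and consider the two endpoints. In the definition of $\delta_{e_0}(V,\nu)$, when $V = V_0$ we orient $e_0$ toward $V_0$, i.e.\ away from $W_0$; when $V = W_0$ we orient $e_0$ toward $W_0$. Meanwhile, in the fractured immersion $\psi$ built for the proof of Theorem~\ref{thm:sigma(m,m)}, all edges including $e_0$ receive the single orientation ``away from $e_0$'', which in practice means $e_0$ carries one fixed orientation, say away from $V_0$; then $\delta_\psi(W_0)$ is computed with $e_0$ pointing \emph{into} $W_0$ (agreeing with the $\delta_{e_0}$ convention at $W_0$), while $\delta_\psi(V_0)$ is computed with $e_0$ pointing \emph{out of} $V_0$ — the opposite of the $\delta_{e_0}$ convention at $V_0$. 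Thus the only vertex where $\delta_\psi$ and $\delta_{e_0}$ disagree is $V_0$, and I would compute the difference there directly from the inflation picture: reversing the orientation of the one edge $e_0$ adjacent to $V_0$ changes which white piece $\varphi(e_0)$ is glued to, shifting $\psi$ near $V_0$ by exactly the width $\nu(\varphi(e_0))$ of the corresponding dark-gray parallelogram. Concretely, $\delta_\psi(V_0) = \delta_{e_0}(V_0,\nu) - \nu(\varphi(e_0))$ (the sign to be pinned down from the ``to the left'' convention in the definition of $\delta_\psi$, using that $V_0$ has, in the relevant arrangement, order 2 with both edges at $\varphi(V_0)$ mapped appropriately, as arranged by a homothety à la Theorem~\ref{thm:sigma(m,m)}).

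Granting this, the theorem follows by summing: Lemma~\ref{lem:sigma=00003Dsum-of-delta} gives $\Sigma_{m'}(\nu) = \sum_V \delta_\psi(V)$, and since $\delta_\psi(V) = \delta_{e_0}(V,\nu)$ for all $V \ne V_0$ while $\delta_\psi(V_0) = \delta_{e_0}(V_0,\nu) - \nu(\varphi(e_0))$, we obtain
\[
\Sigma_{m'}(\nu) = \sum_V \delta_{e_0}(V,\nu) - \nu(\varphi(e_0)),
\]
which rearranges to the claimed identity. One should note that Lemma~\ref{lem:sigma=00003Dsum-of-delta} was stated with $\psi$ built from a specific orientation choice, but since that lemma's conclusion $\Sigma_{m'}(\nu) = \sum_V \delta_\psi(V)$ is itself independent of the orientation used (only the \emph{total} of the $\delta_\psi(V)$ enters, and it equals the orientation-free quantity $\Sigma_{m'}(\nu)$), we are free to run the construction starting from $e_0$ with any chosen orientation of $e_0$.

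The step I expect to be the main obstacle is nailing down the sign and the exact value of the correction at $V_0$ — that is, verifying carefully that flipping the orientation of $e_0$ alone shifts $\psi$ at $V_0$ by precisely $-\nu(\varphi(e_0))$ lattice units in the sense of the ``left of the oriented line'' convention defining $\delta_\psi$. This is genuinely a picture-chase through the inflation construction (which white piece $\varphi(e_0)$ is attached to, and how far the two attachments differ), and it is where one can easily be off by a sign or a factor; once the $r = $ (number of discontinuities) $= 0$ normalization and the homothety reduction (making $V_0$ an order-2 vertex whose two edges map to root-sized edges, exactly as in the last paragraph of the proof of Theorem~\ref{thm:sigma(m,m)}) are in place, the rest is immediate. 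No genuine new idea beyond Theorem~\ref{thm:sigma(m,m)} and its proof is needed; the content is the careful comparison of the two $\delta$-conventions.
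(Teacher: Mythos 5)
Your overall strategy is the paper's: build the fractured immersion $\psi$ from the inflation of $\nu$ with all edges oriented away from the starting edge, invoke Lemma \ref{lem:sigma=00003Dsum-of-delta} to get $\Sigma_{m'}(\nu)=\sum_V\delta_{\psi}(V)$, note that $\delta_{\psi}(V)=\delta_{e_{0}}(V,\nu)$ at every vertex in the generic ``one incoming, rest outgoing'' configuration, and localize the correction $-\nu(\varphi(e_{0}))$ at a single special vertex. Your identification of where the two $\delta$-conventions can disagree (only at the endpoint $V_{0}$ from which $e_{0}$ is oriented away) and your claimed correction $\delta_{\psi}(V_{0})=\delta_{e_{0}}(V_{0},\nu)-\nu(\varphi(e_{0}))$ are correct; in the order-2 case this follows by combining the two local computations already carried out in the proof of Theorem \ref{thm:sigma(m,m)}.

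The gap is in your proposed handling of the step you yourself flag as the obstacle. A homothety does not change the order of any vertex of $T$, so it cannot ``make $V_{0}$ an order-2 vertex'': if $V_{0}$ has order 3, your configuration at $V_{0}$ (all three edges oriented outward, $e_{0}$ glued to the piece on the ``wrong'' side) is not covered by any computation in the paper, and you would have to redo the picture chase for the order-3 definition of $\delta_{\psi}$ (a signed triangle side length), including the sign, from scratch. The correct use of the scaling/edge-breaking device of Section 2 is not to alter $V_{0}$ but to relocate the special vertex: this is exactly what the paper does. It breaks $e_{0}$ in half at its midpoint $Y$ and orients both halves away from $Y$; then every original vertex of $T$, including both endpoints of $e_{0}$, is in the generic configuration, so $\delta_{\psi}(V)=\delta_{e_{0}}(V,\nu)$ there with no case analysis, while $Y$ is an order-2 vertex whose two adjacent edges map onto $\varphi(e_{0})$ and point away from $Y$ --- precisely the configuration computed at the end of the proof of Theorem \ref{thm:sigma(m,m)} --- giving $\delta_{\psi}(Y)=-\nu(\varphi(e_{0}))$, and the identity follows by summing. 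So your argument is repairable, but the repair for the order-3 endpoint is either a new local computation or the midpoint cut, the latter being the paper's one-line proof.
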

\begin{proof}
The easiest way to see this is to cut $e_{0}$ in half, and orient
the two halves away from its midpoint $Y$. Construct a fractured
immersion $\psi$ of $T$ as in the proof of Theorem \ref{thm:sigma(m,m)}.
For this immersion we have $\delta_{\psi}(V)=\delta_{e_{0}}(V,m)$
for each $V$, and $\delta_{g}(Y)=-\nu(\varphi(e_{0}))$.
\end{proof}
In the preceding proof, when $\varphi(e_{0})$ is not contained in
the support of $\nu$, the edge $\varphi(e_{0})$ is simply translated
along with the white puzzle piece which contains it. For our next
result, it will be important that $m'$ be a rigid measure and $\varphi(e_{0})$
be a root edge for the measure $m'$ with $m'(\varphi(e_{0}))=1$.
With this choice, Lemma \ref{lem:local-structure-of-skeletons} implies
the equality\[
\delta_{e_{0}}(V,m')=0\]
for every vertex $V$.

Let $T$ be a tree, and let $AV,VB$ be two edges meeting at a vertex
$V$ of order 2. One can \emph{stretch} the tree to a tree $T'$ replacing
$V$ by a path $V_{1}V_{2}\cdots V_{k}$ of consecutive vertices of
order 2 and the edges $AV$ and $BV$ are replaced by $AV_{1}$ and
$BV_{k}$. Analogously, if $AV,BV,CV$ are three edges meeting at
$V$, we can stretch $T$ by replacing $V$ with a `tripod' formed
by edges $V_{1}V_{2}\cdots V_{i}X$, $W_{1}W_{2}\cdots W_{j}X$, $U_{1}U_{2}\cdots U_{k}X$,
where all new vertices except $X$ have order 2, and $AV,BV,CV$ are
replaced by $AV_{1},BW_{1},CU_{1}.$ If $T'$ is obtained from $T$
by a finite number of such stretch operations, we will say that $T'$
is a \emph{stretch} of $T$. If $\varphi$ is an immersion of a stretch
$T'$ of $T$, the restriction of $f$ to the original edges of $T$
determines a fractured immersion $\psi$ of $T$ with the property
that $\delta_{\psi}(V)=0$ for every vertex $V$ of $T$. Such a fractured
immersion of $T$ will be said to be \emph{stretchable}. If $\psi$
is a stretchable fractured immersion and it is obtained as the restriction
of an immersion $\varphi$, we will also write $m_{\psi}$ for the
measure $m_{\varphi}$. The condition $\delta_{\psi}(V)=0$ for all
$V$ is not sufficient for stretchability. For instance, assume that
$V$ has degree $2$, $AV,VB$ are the two adjacent edges mapped by
$\psi$ to $A'V'$ and $V''B'$. The condition $\delta_{\psi}(V)=0$
implies that the points $A',B',V',V''$ are collinear, but stretchability
requires that $V'$ and $V''$ should be between $A'$ and $B'$;
the distance from $V'$ to $V''$ is precisely the number of additional
edges one must add at the point $V$. Similarly, if $AV,BV,CV$ are
mapped to $A'V',B'V'',C'V'''$, the condition $\delta_{\psi}(V)=0$
implies that these three lines intersect in a point $Z$, and stretchability
requires that $V'$ (resp. $V'',V'''$) be between $A'$ (resp. $B',C'$)
and $Z$.

Part of the following argument (namely, the case $q=0$) amounts to
a simplified proof of \cite[Theorem 4.3]{bcdlt}.
\begin{thm}
\label{thm:stretchability}Let $\mu,m'\in\mathcal{M}_{r}$, where
$m'$ is an extremal rigid measure assigning unit density to its root
edges; in particular $m'=m_{\varphi}$ for some immersion $\varphi$
of a tree $T$. Assume further that $\Sigma_{m'}(\mu)=0$. Denote
by $\alpha_{\ell},\beta_{\ell},\gamma_{\ell}$ and $\alpha_{\ell}',\beta'_{\ell},\gamma'_{\ell}$
the exit densities of $\mu$ and $m'$, respectively. There exists
a stretchable fractured immersion $\psi$ of $T$ such that 
\begin{enumerate}
\item all the limits of $\psi$ at discontinuity points are contained in
$\triangle_{r+\omega(\mu)}$, 
\item the exit densities $\widetilde{\alpha}_{i}'$ of the corresponding
measure $\widetilde{m}'=m_{\psi}\in\mathcal{M}_{r+\omega(\mu)}$ are
only different from zero for $i=\ell+\sum_{s=0}^{\ell-1}\alpha{}_{s}$,
$\ell=1,2,\dots,r$, in which case $\widetilde{\alpha}_{i}'=\alpha_{\ell}'$,
with similar formulas for $\widetilde{\beta}_{i}'$ and $\widetilde{\gamma}_{i}'$.
\end{enumerate}
\end{thm}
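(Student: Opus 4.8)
The plan is to build $\psi$ by the inflation construction already used to prove Theorem \ref{thm:sigma(m,m)}, applied with $\nu = \mu$, and then to verify that the hypothesis $\Sigma_{m'}(\mu) = 0$ forces this fractured immersion to be stretchable. Concretely, I would fix an edge $e_0$ of $T$ with $\varphi(e_0)$ a root edge for $m'$ of density $1$, orient all other edges away from $e_0$, and form the fractured immersion $\psi$ by translating each $\varphi(e)$ along with the inflation of $\mu$ (attaching $\varphi(e)$ to the white piece on the right of $\varphi(e)$ when $e$ is in the support of $\mu$). By the discussion preceding Lemma \ref{lem:sigma=00003Dsum-of-delta}, the end exiting at $A_\ell$ for $\varphi$ now exits at $A_{\ell + \alpha_0 + \cdots + \alpha_{\ell-1}}$ for $\psi$; this gives claim (2) \emph{provided} $\psi$ really is a stretchable fractured immersion with $m_\psi = m_\varphi$ in the stretched sense, and claim (1) is immediate since all fractures lie inside $\triangle_{r+\omega(\mu)}$, the triangle into which the inflation of $\mu$ translates $\triangle_r$.

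The heart of the argument is showing stretchability, i.e. that $\delta_\psi(V) = 0$ for every vertex $V$ \emph{and} that at each vertex the translated images sit in the correct order along the relevant lines (the extra condition flagged in the paragraph before the theorem). For $\delta_\psi(V)=0$: by the computation in the proof of Theorem \ref{thm:sigma(m,m)}(1), $\delta_\psi(V) = \mu(V'X)$ for an explicit edge $V'X$ adjacent to $\varphi(V)$ — the one $60^\circ$ clockwise of the outgoing edge (order $2$) or opposite the incoming edge (order $3$). By Lemma \ref{lem:sigma=00003Dsum-of-delta}, $\sum_V \delta_\psi(V) = \Sigma_{m'}(\mu) = 0$, and since every summand $\mu(V'X) \ge 0$, each one vanishes. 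So every such distinguished edge $V'X$ carries zero $\mu$-density; this is exactly the statement that at each $V$ the two (order $2$) or three (order $3$) translated half-edges lie on a common line (resp. are concurrent), using the inflation geometry: crossing a white piece keeps you on the line, and the parallelogram inserted along an edge $e$ with $\mu(e)=0$ has width zero. This also handles the ordering condition: because all edges are oriented coherently away from $e_0$ (the orientation consistency from \cite{bcdlt} recalled after the proof of Theorem \ref{thm:rigid-skeletons-are-trees}), the translations never reverse the direction of travel along a line, so $V'$ and $V''$ always fall between $A'$ and $B'$ (resp. between the outer endpoints and the intersection point $Z$), which is precisely stretchability; the number of edges to insert at $V$ equals the cumulative $\mu$-density swept, which is a nonnegative integer.

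The one point needing care — and the step I expect to be the main obstacle — is the global compatibility: the local data "insert $k_V$ edges at $V$" must assemble into a single genuine stretch $T'$ of $T$ carrying an honest immersion $\varphi'$ whose restriction to the original edges is $\psi$, and whose induced measure $\widetilde m' = m_{\varphi'}$ lies in $\mathcal M_{r+\omega(\mu)}$. I would argue this by walking along each edge and each end of $T$ and tracking the displacement $\psi(t) - \varphi(t)$, which by definition of a fractured immersion is constant on each edge's interior and jumps only at vertices; the jumps are exactly the $k_V$-edge insertions, so gluing the stretched pieces along these insertions produces $T'$, and the map that equals $\psi$ on old edges and is affine-isometric along each inserted path is the required immersion $\varphi'$. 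That $\widetilde m' \in \mathcal M_{r+\omega(\mu)}$ (balance, branch points inside the triangle, zero density on the $A_jX_{j+1}$-type edges) follows because $\varphi'$ is a genuine immersion with all triple points inside $\triangle_{r+\omega(\mu)}$, which in turn holds because the triple vertices of $T'$ are the triple vertices of $T$, whose $\psi$-images were assumed (via the limits-in-$\triangle_{r+\omega(\mu)}$ bookkeeping, claim (1)) to land inside that larger triangle. Finally, reading off the exit densities of $\widetilde m'$ from where the ends of $T'$ exit $\triangle_{r+\omega(\mu)}$ — which are exactly the shifted exit points computed above — yields the formula $\widetilde\alpha'_i = \alpha'_\ell$ for $i = \ell + \sum_{s<\ell}\alpha_s$ and zero otherwise, completing (2).
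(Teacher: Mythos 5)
Your construction works only in the case that the paper disposes of quickly, and misses the case that carries the real content of the proof. You fix $e_{0}$ to be an edge with $\varphi(e_{0})$ a root edge of $m'$ and then assert that, for the fractured immersion obtained from the inflation of $\mu$, every $\delta_{\psi}(V)$ equals $\mu(V'X)\ge 0$, so that $\sum_{V}\delta_{\psi}(V)=\Sigma_{m'}(\mu)=0$ forces each term to vanish. That pointwise formula is only valid when $\mu(\varphi(e_{0}))=0$: if $\varphi(e_{0})$ lies in the support of $\mu$, the inflation inserts a dark gray parallelogram of width $\mu(\varphi(e_{0}))>0$ along $\varphi(e_{0})$ itself, and (exactly as in the computation of $\delta_{\psi}(V_{0})=-1$ in the proof of Theorem \ref{thm:sigma(m,m)}(3), or as recorded in Theorem \ref{thm:arbitrary-starting-edge}) one vertex acquires $\delta_{\psi}=-\mu(\varphi(e_{0}))<0$. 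Then the vanishing of the total sum no longer kills the individual terms; in fact the nonnegative contributions must sum to $+\mu(\varphi(e_{0}))$, so the raw inflation construction is genuinely not stretchable at several vertices. This situation is unavoidable: whenever $qm'\le\mu$ for some integer $q\ge 1$ (e.g.\ $\mu=m'+m''$ with $\Sigma_{m'}(m'')=1$), \emph{every} edge of $T$ is mapped into the support of $\mu$ and no admissible choice of $e_{0}$ exists, and this is compatible with the hypothesis $\Sigma_{m'}(\mu)=0$.

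The paper's proof splits off $q=\max\{s:sm'\le\mu\}$. When $q=0$ it argues essentially as you do, but with $e_{0}$ chosen so that $\mu(\varphi(e_{0}))=0$ (not a root edge), using Theorem \ref{thm:arbitrary-starting-edge} to get $\delta_{e_{0}}(V,\mu)=0$ at every vertex. When $q>0$ it does not use the raw translates: it slides each translated edge $\psi_{0}(e)$ inside its dark gray parallelogram by $\sum_{V\ge e}\delta_{e_{0}}(V,\mu)$ units, checks via Theorem \ref{thm:arbitrary-starting-edge} and $\Sigma_{m'}(\mu)=0$ that this displacement never exceeds the parallelogram's width $\mu(e)$ and is independent of the orientation given to $e_{0}$, and then verifies stretchability at order-2 and order-3 vertices by the local identities relating the displacements of adjacent edges. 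This sliding step is the essential mechanism your proposal lacks; without it the argument collapses precisely when the support of $m'$ is contained in the support of $\mu$. (Your final paragraph on assembling the local insertions into a stretch $T'$ is fine and matches the intended reading of stretchability, but it rests on the unproved alignment claims above.)
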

\begin{proof}
Denote by $q$ the largest integer with the property that $qm'\le\mu$,
and set $\nu=\mu-qm'$. It is clear that the support of $m'$ is not
contained in the support of $\nu$. 

Assume first that $q=0$, and choose an edge $e_{0}$ such that $\varphi(e_{0})$
is contained in $\triangle_{r}$ and $\nu(\varphi(e_{0}))=0$. Theorem
\ref{thm:arbitrary-starting-edge} implies that $\delta_{e_{0}}(V,\nu)=0$
for every vertex $V$ of $T$. Orient all the edges of $T$ away from
$e_{0}$, and construct a fractured immersion $\psi$ of $T$ by attaching
each $\varphi(e)$ to the white puzzle piece of $\nu$ on its right.
The condition $\delta_{e_{0}}(V)=0$ insures that $\psi$ is stretchable
at $V$, so that (1) holds. Since all the ends of $T$ are oriented
outward, condition (2) is satisfied as well.

Consider now the case $q>0$, fix an edge $e_{0}$ such that $\varphi(e_{0})$
is a root edge of $m'$ contained in $\triangle_{r}$, and orient
all the edges away from $e_{0}$. Give $e_{0}$ either orientation,
and construct a fractured immersion $\psi_{0}$ of $T$ by attaching
each $\varphi(e)$ to the white puzzle piece of $\mu=\nu+qm'$ on
its right. To conclude the proof, it will suffice to construct a stretchable
fractured immersion $\psi$ which coincides with $\psi_{0}$ on the
ends of $T$. Note that $\psi_{0}(e)$ is now an edge of a dark gray
parallelogram whose other side has length $\mu(e)$. We construct
$\psi(e)$ by moving $\psi_{0}(e)$ inside this parallelogram a number
of units equal to\[
\sum_{V\ge e}\delta_{e_{0}}(V,\nu)=\sum_{V\ge e}\delta_{e_{0}}(V,\mu),\]
away from the white piece to which $\psi_{0}(e)$ was attached, where
the sum is extended over the vertices $V$ which are descendants of
$e$ in the chosen orientation. In other words, the sum is extended
over those vertices $V$ for which the shortest path from $e_{0}$
to $V$ passes through $e$. It is important to note that $\psi(e)$
really is contained in this (closed) gray parallelogram, and for this
purpose it suffices to show that\[
\sum_{V\ge e}\delta_{e_{0}}(V,\mu)\le\mu(e).\]
This follows from the fact that $\delta_{e_{0}}(V,\mu)=\delta_{e}(V,\mu)$
if $V\ge e$, and therefore\[
\sum_{V\ge e}\delta_{e_{0}}(V,\mu)=\sum_{V\ge e}\delta_{e}(V,\mu)\le\sum_{V}\delta_{e}(V,\mu)=\mu(e)\]
by Theorem \ref{thm:arbitrary-starting-edge}, since $\Sigma_{m'}(\mu)=0$.
Also observe that the position of $\psi(e_{0})$ does not depend on
the orientation chosen for $e_{0}$ because (with either orientation)\[
\sum_{V\ge e_{0}}\delta_{e_{0}}(V,\mu)+\sum_{V\le e_{0}}\delta_{e_{0}}(V,\mu)=\sum_{V}\delta_{e_{0}}(V,\mu)=\mu(e_{0}),\]
and this is precisely the width of the dark gray parallelogram of
which $\psi_{0}(e_{0})$ is a side. It remains now to verify that
$\psi$ is stretchable. Consider first two edges $e_{1}=AB,e_{2}=BC$
adjacent to a vertex $B$ of order 2, oriented toward $B$ and $C$
respectively. Assume that $\varphi(e_{1})=A'B',$ $\varphi(e_{2})=B'C'$,
and the small edge $B'X$ is on the right of $A'B'$ such that $\varangle XB'C'=60^{\circ}$.
We have then\[
\sum_{V\ge e}\delta_{e_{0}}(V,\mu)=\mu(B'X)+\sum_{V\ge f}\delta_{e_{0}}(V,\mu),\]
so that $\psi_{0}(e_{1})$ must be moved left $\mu(B'X)$ more units
than $\psi_{0}(e_{1})$. This is precisely what is needed to align
$\psi(e)$ and $\psi(f)$, as illustrated in the figure below, where
the solid lines represent $\psi_{0}(e_{1})$ and $\psi_{0}(e_{2})$,
the dashed lines represent $\psi(e)$ and $\psi(f)$, and the dotted
line represents the range of the stretch of $\psi$.

\begin{center}\includegraphics{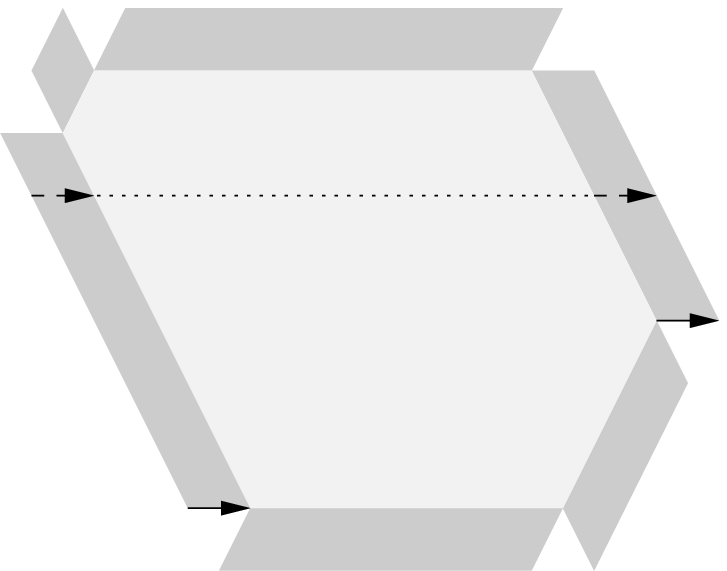}\end{center}

Assume now that $e_{1}=AV,e_{2}=BV,e_{3}=CV$ are three edges adjacent
to $V$, such that $e_{1}$ is oriented toward $V$. These edges are
mapped by $\varphi$ to $A'V',B'V',C'V'$. Let $V'X$ be the small
edge opposite $A'V'$.We have\[
\sum_{V\ge e_{1}}\delta_{e_{0}}(V,\mu)=\mu(V'X)+\sum_{V\ge e_{2}}\delta_{e_{0}}(V,\mu)+\sum_{V\ge e_{3}}\delta_{e_{0}}(V,\mu).\]

\begin{center}\includegraphics{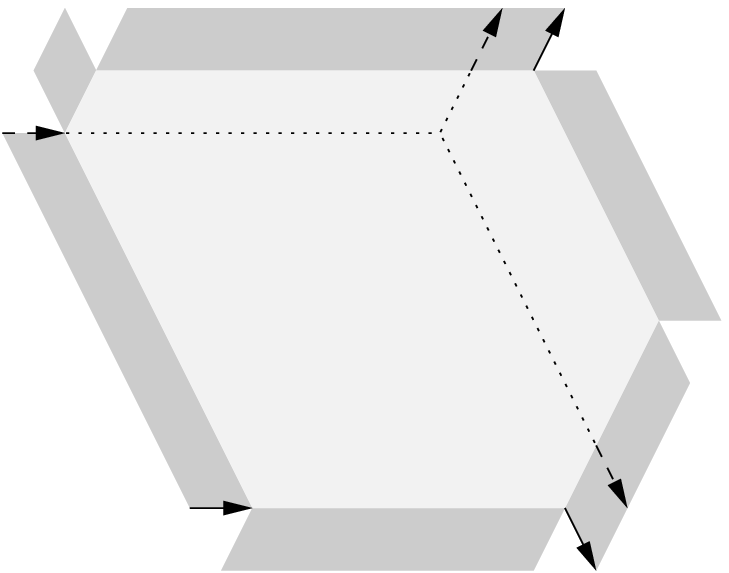}\end{center}

\noindent This relation is precisely what is needed to insure that
the break of $\psi$ at $V$ is stretchable, as in the illustration.
\end{proof}
\noindent The preceding theorem produces a measure $\widetilde{m}'$
which is again a rigid tree measure. Indeed, $\widetilde{m}'$ has
the same nonzero densities as $m'$, and therefore $\Sigma_{\widetilde{m}'}(\widetilde{m}')=\Sigma_{m'}(m')=-1$.
In fact, it is easy to see that $\widetilde{m}'$ is homologous to
$m'$ in the sense defined in \cite{bcdlt} and discussed in the following
section. Indeed, using the notation in the proof above, this follows
because two edges $e,e'$ of $T$ such that $\varphi(e)=\varphi(e')$
will satisfy\[
\sum_{V\ge e}\delta_{e_{0}}(V,m)=\sum_{V\ge e'}\delta_{e_{0}}(V,m),\]
and therefore their translates $\psi(e)$ and $\psi(e')$ will coincide
as well.

\section{Reduction of the Intersection Problem\label{sec:Reduction-of-the-intersection-problem}}

We are now ready to discuss the reduction procedures mentioned in
the introduction. We recall first some facts from \cite{bcdlt}. Fix
a measure $m\in\mathcal{M}_{r}$ with integer densities. A point $A_{\ell}$
(resp. $B_{\ell},C_{\ell}$) is called an \emph{attachment point}
of $m$ if $\ell\ge1$ and $m(A_{\ell}X_{\ell})>0$ (resp. $m(B_{\ell}Y_{\ell})>0,m(C_{\ell}Z_{\ell})>0$).
We denote by ${\rm att}_{I}(m)$ (resp. ${\rm att}_{J}(m)$, ${\rm att}_{K}(m)$)
the collection of indices $\ell\in\{1,2,\dots,r\}$ such that $A_{\ell}$
(resp. $B_{\ell},C_{\ell})$ is an attachment point for $m$.

Let now $I_{m},J_{m},K_{m}\subset\{1,2,\dots,n=r+\omega(m)\}$ be
the sets of cardinality $r$ defined by (\ref{eq:15}). The index
$i_{\ell}\in I_{m}$ (resp. $j_{\ell}\in J_{m},k_{\ell}\in K_{m}$)
is called an \emph{attachment index }for\emph{ $m$ }if $A_{\ell}$
(resp. $B_{\ell},C_{\ell}$) is an attachment point. We denote by
$I_{m}^{\text{att}},J_{m}^{\text{att}},K_{m}^{\text{att}}$ the collections
of attachment indices; thus $I_{m}^{\text{att}}=\{i_{\ell}:\ell\in{\rm att}_{I}(m)\}$.
Assume further that we are given flags $\mathcal{E},\mathcal{F},\mathcal{G}$
in $\mathbb{C}^{n}$. The spaces\emph{ }$\{\mathbb{E}_{i_{\ell}}:\ell\in I_{m}^{\text{att}}\}$,
$\{\mathbb{F}_{j_{\ell}}:\ell\in J_{m}^{\text{att}}\}$, $\{\mathbb{G}_{k_{\ell}}:\ell\in K_{m}^{\text{att}}\}$
are called the \emph{attachment spaces }of $m$. 

Let now $\widetilde{m}\in\mathcal{M}_{\widetilde{r}}$ be a second
measure with integer densities. The measures $m$ and $\widetilde{m}$
are said to be \emph{homologous} if there is a bijection between the
white piece edges determined by the support of $m$ and those determined
by the support of $\widetilde{m}$ such that corresponding edges are
parallel, and incident edges correspond to incident edges (the intersection
point being the one dictated by the correspondence of the edges).
If $m$ and $\widetilde{m}$ are homologous, there clearly exist order
preserving bijection $\varphi_{I}:I_{m}^{\text{att}}\to I_{\widetilde{m}}^{\text{att}}$,
$\varphi_{J}:J_{m}^{\text{att}}\to J_{\widetilde{m}}^{\text{att}}$,
$\varphi_{K}:K_{m}^{\text{att}}\to K_{\widetilde{m}}^{\text{att}}$.

Also recall that a lattice polynomial of a collection $\mathcal{X}=(\mathbb{X}_{\nu})_{\nu\in N}$
of spaces is defined inductively by the requirements that
\begin{enumerate}
\item for each $\nu$, the expression $P_{\nu}(\mathcal{X})=\mathbb{X}_{\nu}$
is a lattice polynomial, and
\item if $P(\mathcal{X})$ and $Q(\mathcal{X})$ are lattice polynomials,
then $(P(\mathcal{X}))+(Q(\mathcal{X}))$ and $(P(\mathcal{X}))\cap(Q(\mathcal{X}))$
are also lattice polynomials.
\end{enumerate}
More formally, lattice polynomials should be defined as elements of
an abstract lattice generated by a set of variables indexed by $N$.
One can then substitute subspaces for the variables to obtain a new
subspace. This gives the proper meaning to the last statement in the
next theorem.

The following result is a reformulation of results in \cite{bcdlt}.
The fact that the lattice polynomial is essentially the same for all
homologous measures is not explicitly stated there, but it is easily
verified using the argument of \cite[Proposition 5.1]{bcdlt}.
\begin{thm}
\label{thm:lattice-poly}Assume that $m\in\mathcal{M}_{r}$ is a rigid
measure with integer densities, and $\mathcal{E},\mathcal{F},\mathcal{G}$
are flags in $\mathbb{C}^{n}$, $n=r+\omega(m)$. There exists a lattice
polynomial $P_{m}$ of the attachment spaces of $m$ such that generically\[
P_{m}(\mathbb{E}_{i},\mathbb{F}_{j},\mathbb{G}_{k}:i\in I_{m}^{{\rm att}},j\in J_{m}^{{\rm att}},k\in K_{m}^{{\rm att}})\in\mathfrak{S}(\mathcal{E},I_{m})\cap\mathfrak{S}(\mathcal{F},J_{m})\cap\mathfrak{S}(\mathcal{G},K_{m}).\]
Moreover, if $\widetilde{m}\in\mathcal{M}_{\widetilde{r}}$ is homologous
to $m$ and $\widetilde{\mathcal{E}},\widetilde{\mathcal{F}},\widetilde{\mathcal{G}}$
are flags in $\mathbb{C}^{\widetilde{n}}$,\[
P_{\widetilde{m}}(\widetilde{\mathbb{E}}_{\varphi_{I}(i)},\widetilde{\mathbb{F}}_{\varphi_{J}(j)},\widetilde{\mathbb{G}}_{\varphi_{K}(k)}:i\in I_{m}^{{\rm att}},j\in J_{m}^{{\rm att}},k\in K_{m}^{{\rm att}})\]
equals\[
P_{m}(\widetilde{\mathbb{E}}_{\varphi_{I}(i)},\widetilde{\mathbb{F}}_{\varphi_{J}(j)},\widetilde{\mathbb{G}}_{\varphi_{K}(k)}:i\in I_{m}^{{\rm att}},j\in J_{m}^{{\rm att}},k\in K_{m}^{{\rm att}}).\]

\end{thm}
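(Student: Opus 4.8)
The plan is to derive the statement from the explicit construction in \cite{bcdlt} of the unique point of a rigid Schubert intersection, organizing that construction so that it is visibly governed by the combinatorial incidence data of the white puzzle pieces of $m$ — data that is, by definition, preserved under homology. The existence of $P_m$ and the membership claim are thus a reformulation of \cite{bcdlt}; the point that requires genuinely new bookkeeping is that $P_m$ depends only on the homology class of $m$.

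For the existence part I would proceed by a peeling induction. Since $m$ is rigid, \cite[Corollary 3.6]{bcdlt} writes $m=\sum_{j=1}^{N}p_{j}m_{j}$ with the $m_{j}$ distinct extremal tree measures (Theorem \ref{thm:rigid-skeletons-are-trees}), and Lemma \ref{lem:precedence-vs-Sigma} together with \cite[Theorem 3.8]{bcdlt} lets me order the summands so that $\Sigma_{m_{j}}(m_{i})>0$ forces $i\le j$. For a $\prec_{0}$-minimal summand $m_{1}$ one has $\Sigma_{m_{1}}(m_{j})=0$ for $j\ne1$ and $\Sigma_{m_{1}}(m_{1})=-1$ by Theorem \ref{thm:sigma(m,m)}(3), so $\Sigma_{m_{1}}(m)=-p_{1}<0$; Corollary \ref{cor:m-pm} then shows that $m-m_{1}\in\mathcal{M}_{r}$ is again rigid with the same Littlewood--Richardson coefficient $1$. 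Inducting on $\sum_{j}p_{j}$, I obtain a lattice polynomial for the $(m-m_{1})$-problem, note that the attachment data of $m$ and of $m-m_{1}$ differ only through the spaces sitting at the branch points of $m_{1}$, and recover the solution of the $m$-problem from that of the $(m-m_{1})$-problem by the chain of sums and intersections dictated, branch point by branch point, by the local pictures of Lemma \ref{lem:local-structure-of-skeletons} read off from the inflation of Section 4. Composing these steps gives $P_m$, and the genericity hypothesis is exactly the condition under which each sum and intersection in the chain attains its expected dimension, placing the output in $\mathfrak{S}(\mathcal{E},I_{m})\cap\mathfrak{S}(\mathcal{F},J_{m})\cap\mathfrak{S}(\mathcal{G},K_{m})$. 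The base case $\sum_{j}p_{j}=1$, where $m$ is a single extremal rigid tree measure, is the case treated directly in \cite{bcdlt}.

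For the final assertion I would observe that nothing in this procedure uses the numerical densities of $m$ or the ambient $r$: the order in which attachment spaces are brought in, and whether each step is a sum or an intersection, is determined entirely by which white-piece edges of $m$ are incident and by their slopes. A homology between $m$ and $\widetilde m$ is exactly a slope- and incidence-preserving bijection of white-piece edges, which induces the order-preserving bijections $\varphi_{I},\varphi_{J},\varphi_{K}$ on attachment indices; hence running the construction for $\widetilde m$ produces the identical abstract lattice word with its variables renamed by $\varphi_{I},\varphi_{J},\varphi_{K}$, and substituting $\widetilde{\mathbb{E}}_{\varphi_{I}(i)},\widetilde{\mathbb{F}}_{\varphi_{J}(j)},\widetilde{\mathbb{G}}_{\varphi_{K}(k)}$ gives the claimed equality. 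The one step that goes beyond quoting \cite{bcdlt} is making this matching precise — checking that homologous measures have homologous inflations, so that the recursive reading of $P_m$ off the puzzle really yields identical lattice words — and it is the same verification carried out in \cite[Proposition 5.1]{bcdlt}. I expect this combinatorial matching, rather than any geometric input, to be the main obstacle.
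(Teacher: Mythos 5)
Your proposal matches the paper's own treatment of this statement: the paper offers no independent argument, saying only that the theorem is a reformulation of results in \cite{bcdlt} and that the invariance of the lattice polynomial under homology is verified by the argument of \cite[Proposition 5.1]{bcdlt} --- precisely the two points to which your sketch ultimately defers. Your peeling induction (ordering the extremal summands via Lemma \ref{lem:precedence-vs-Sigma}, removing a minimal summand with Corollary \ref{cor:m-pm}, and reassembling via the inflation picture) is a reasonable reconstruction of that cited construction and is consistent with the reduction machinery of Section \ref{sec:Reduction-of-the-intersection-problem}, so the two approaches are essentially the same.
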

Given a measure (rigid or not) $m\in\mathcal{M}_{r}$ and an extremal,
rigid tree measure $m'\in\mathcal{M}_{r}$, we will be able to apply
a reduction of the Schubert problem associated to $m$ provided that
$\Sigma_{m'}(m)=-p<0$. More precisely, the Schubert problem will
be reduced to the corresponding problem for the measure $m-pm'$ (which
satisfies $c_{m-pm'}=c_{m}$ by Corollary \ref{cor:m-pm}) in a space
$\mathbb{X}$ of dimension $n-p\omega(m')$. The space $\mathbb{X}$
is obtained by applying the lattice polynomial $P_{m'}$ to the attachment
spaces of $m$ corresponding to the attachment points of $m'$. The
following result describes the procedure in detail. The argument is
essentially contained in \cite[Proposition 5.1]{bcdlt}, but we include
it here for completeness, and as a practical recipe. Observe that
$\Sigma_{m'}(m)+\omega'n$ is precisely the sum (\ref{eq:reduction-witness})
mentioned in our initial discussion of reductions.
\begin{thm}
\label{thm:reduction-procedure}Let $m,m'\in\mathcal{M}_{r}$ be two
measures with integer densities such that $m'$ is a rigid tree measure,
and $\Sigma_{m'}(m)=-p<0$. Denote by $i_{\ell},j_{\ell},k_{\ell}$,
$\ell=1,2,\dots,r$, the elements of $I=I_{m},J=J_{m},K=K_{m}$, respectively.
Given generic flags $\mathcal{E},\mathcal{F},\mathcal{G}$ in $\mathbb{C}^{n}$,
$n=r+\omega(m)$, the space\[
\mathbb{X}=P_{m'}(\mathbb{E}_{i_{x}},\mathbb{F}_{j_{y}},\mathbb{G}_{k_{z}}:x\in{\rm att}_{I}(m'),y\in{\rm att}_{J}(m'),z\in{\rm att}_{K}(m'))\]
has dimension $n-p\omega(m')$. Moreover, denote by $\mathcal{E}'$
the flag in $\mathbb{X}$ obtained by intersecting the spaces in $\mathcal{E}$
with $\mathbb{X}$ and discarding repeating spaces, with similar definitions
for $\mathcal{F}'$ and $\mathcal{G}'$. Then we have \[
\mathfrak{S}(\mathcal{E}',I_{m-pm'})\cap\mathfrak{S}(\mathcal{F}',J_{m-pm'})\cap\mathfrak{S}(\mathcal{G}',K_{m-pm'})\mathfrak{\subset S}(\mathcal{E},I_{m})\cap\mathfrak{S}(\mathcal{F},J_{m})\cap\mathfrak{S}(\mathcal{G},K_{m}).\]
\end{thm}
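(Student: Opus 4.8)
The plan is to reduce the assertion to an application of Theorem~\ref{thm:lattice-poly} by producing a measure in a triangle of the right size whose puzzle data on the attachment edges of $m'$ realizes the space $\mathbb{X}$ as a lattice-polynomial value. First I would invoke Corollary~\ref{cor:m-pm} to know that $pm'\le m$ and $c_{m-pm'}=c_{m}$, so the combinatorics of $m-pm'$ make sense and the reduced problem really has the same Littlewood--Richardson coefficient. Since $\Sigma_{m'}(m)=-p<0$, writing $\mu=m-pm'$ we have $\Sigma_{m'}(\mu)\ge 0$ by Theorem~\ref{thm:sigma(m,m)}(1); on the other hand $\Sigma_{m'}(m)=\Sigma_{m'}(\mu)+p\,\Sigma_{m'}(m')=\Sigma_{m'}(\mu)-p=-p$ forces $\Sigma_{m'}(\mu)=0$. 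This is exactly the hypothesis needed to run Theorem~\ref{thm:stretchability} with this $\mu$ and this $m'$: there is a stretchable fractured immersion $\psi$ of the tree $T$ underlying $m'$, giving a rigid tree measure $\widetilde{m}'\in\mathcal{M}_{r+\omega(\mu)}$ homologous to $m'$, whose nonzero exit densities sit at the indices $\ell+\sum_{s<\ell}\alpha_s$ dictated by $\mu$ (and similarly for $\beta,\gamma$). Equivalently, $\widetilde m'$ is a tree measure whose attachment points correspond, under the $\varphi_I,\varphi_J,\varphi_K$ bijections, to the attachment indices of $m'$ sitting inside the index sets $I_\mu,J_\mu,K_\mu$.

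Next I would identify the attachment spaces of $\widetilde m'$ inside $\mathbb{C}^n$. The point is that $I_m=I_{m-pm'}\sqcup(\text{shift of }I_{m'}\text{'s attachments})$ in the precise sense given by formula~(\ref{eq:15}): because $pm'\le m$, the exit densities of $m$ are $\alpha_\ell=\alpha^{\mu}_\ell+p\alpha'_\ell$, so $i_\ell = \ell+\sum_{s<\ell}\alpha_s$ and the attachment points of $m'$ (those with $\alpha'_{x}>0$) pick out exactly the indices $i_x$ with $x\in{\rm att}_I(m')$; the spaces $\mathbb{E}_{i_x}$ for these $x$ are precisely the attachment spaces of $\widetilde m'$ after relabeling via $\varphi_I$. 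Therefore applying Theorem~\ref{thm:lattice-poly} to the homologous pair $(m',\widetilde m')$ — with $m'$ sitting in $\mathbb{C}^{r+\omega(m')}$ and $\widetilde m'$ in $\mathbb{C}^{n}$ via the flags $\mathcal{E},\mathcal{F},\mathcal{G}$ — shows that $P_{m'}(\mathbb{E}_{i_x},\mathbb{F}_{j_y},\mathbb{G}_{k_z}:\dots)$ equals $P_{\widetilde m'}(\dots)$, and the latter is generically a point of $\mathfrak{S}(\mathcal{E},I_{\widetilde m'})\cap\mathfrak{S}(\mathcal{F},J_{\widetilde m'})\cap\mathfrak{S}(\mathcal{G},K_{\widetilde m'})$, hence an $(r)$-dimensional… no: $\widetilde m'$ has weight $\omega(\mu)$ replaced appropriately, so $P_{\widetilde m'}$ is a space of dimension $r+\omega(\widetilde m')-\omega(m')\cdot(\text{something})$; I would compute this dimension carefully and check it comes out to $n-p\,\omega(m')$, using $\dim\mathbb{X}=$ (ambient dimension of the $\widetilde m'$-puzzle) $-$ (the part corresponding to $m-pm'$). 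Concretely $\dim\mathbb X = (r+\omega(m)) - p\,\omega(m') = n-p\,\omega(m')$, matching the claim.

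For the containment, I would argue as in \cite[Proposition 5.1]{bcdlt}: any $\mathbb{M}\in\mathfrak{S}(\mathcal{E}',I_{m-pm'})\cap\mathfrak{S}(\mathcal{F}',J_{m-pm'})\cap\mathfrak{S}(\mathcal{G}',K_{m-pm'})$ satisfies $\dim(\mathbb{M}\cap(\mathbb{E}_i\cap\mathbb{X}))\ge x$ at the prescribed ranks; since $\mathcal{E}'$ is obtained by intersecting $\mathcal{E}$ with $\mathbb{X}$ and deleting repeats, I need to translate these rank conditions into rank conditions against the original flag $\mathcal{E}$. The key bookkeeping is the relation between the index set $I_{m-pm'}$ inside $\{1,\dots,\dim\mathbb X\}$ and the index set $I_m$ inside $\{1,\dots,n\}$: for $\ell\in{\rm att}_I(m')$ the extra $p\alpha'_\ell$ units of exit density got absorbed into $\mathbb{X}$, and the flag-intersection formula of the introductory discussion (generalized from the Thompson--Therianos case) says $\dim(\mathbb{E}_{i}\cap\mathbb{X})$ has controlled jumps, so a subspace of $\mathbb{X}$ meeting $\mathbb{E}_{i'_\ell}\cap\mathbb{X}$ in dimension $\ge\ell$ automatically meets $\mathbb{E}_{i_\ell}$ in dimension $\ge\ell$. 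I expect this last index-chasing — verifying that the flag $\mathcal{E}'$ in $\mathbb X$ has exactly the ranks that turn $I_{m-pm'}$-membership into $I_m$-membership, uniformly in $\mathcal{E},\mathcal{F},\mathcal{G}$ — to be the main obstacle, and the cleanest route is to express everything through the exit densities via (\ref{eq:15}) and the homology $\widetilde m'\sim m'$, rather than manipulating subspaces directly. The genericity of $\mathcal{E},\mathcal{F},\mathcal{G}$ is used only to guarantee $\dim\mathbb X$ is as stated and that the lattice polynomial $P_{m'}$ behaves as in Theorem~\ref{thm:lattice-poly}; the containment itself holds for the resulting $\mathbb{X}$ regardless.
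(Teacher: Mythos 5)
Your proposal follows the paper's own route almost step for step: decompose $m=pm'+\mu$ via Corollary \ref{cor:m-pm}, deduce $\Sigma_{m'}(\mu)=0$, invoke Theorem \ref{thm:stretchability} to get the homologous stretched measure, realize $\mathbb{X}$ through Theorem \ref{thm:lattice-poly}, and finish with the index bookkeeping $\dim(\mathbb{M}\cap\mathbb{E}_{i_\ell})\ge\dim(\mathbb{M}\cap\mathbb{E}'_i)\ge\ell$, exactly as in the paper. The only fix needed is that the measure homologous to $m'$ whose Schubert problem lives in $\mathbb{C}^{n}$ is the scaled measure $p\widetilde m'$ (weight $p\,\omega(m')$, so ambient dimension $r+\omega(\mu)+p\,\omega(m')=n$), not $\widetilde m'$ itself; this resolves the dimension hesitation, since $\mathbb{X}\in G(r+\omega(\mu),\mathbb{C}^{n})$ gives $\dim\mathbb{X}=n-p\,\omega(m')$ directly.
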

\begin{proof}
Denote the exit densities of $m$ by $a_{\ell},b_{\ell},c_{\ell}$,
$\ell=0,1,\dots,r$. Thus the elements of $I_{m},J_{m},K_{m}$ are
given by\[
i_{\ell}=\ell+\sum_{\ell'<\ell}a_{\ell'},\quad j_{\ell}=\ell+\sum_{\ell'<\ell}b_{\ell'},\quad k_{\ell}=\ell+\sum_{\ell'<\ell}c_{\ell'}\]
 for $\ell=1,2,\dots,r$. By Corollary \ref{cor:m-pm}, we can write
$m=pm'+\mu$, where $\mu\in\mathcal{M}_{r}$, and $\Sigma_{m'}(\mu)=0$.
Denote the exit densities of $\mu$ and $m'$ by $\alpha_{\ell},\beta_{\ell},\gamma_{\ell}$
and $\alpha'_{\ell},\beta'_{\ell},\gamma'_{\ell}$, respectively.
We have\[
a_{\ell}=\alpha_{\ell}+p\alpha'_{\ell},\quad b_{\ell}=\beta_{\ell}+p\beta'_{\ell},\quad c_{\ell}=\gamma_{\ell}+p\gamma'_{\ell}\]
for $\ell=0,1,\dots,r$. Theorem \ref{thm:stretchability} yields
a rigid tree measure $\widetilde{m}'\in\mathcal{M}_{r+\omega(\mu)}$,
homologous to $m'$, whose only possible nonzero exit densities are
$\widetilde{\alpha}_{i}'=\alpha'_{\ell}$ for\[
i=\ell+\sum_{k<\ell}\alpha_{k}=\ell+\sum_{k<\ell}(a_{k}-p\alpha'_{k}),\quad\ell=0,1,2,\dots,r,\]
with analogous formulas for $\widetilde{\beta}'_{i}$ and $\widetilde{\gamma}'_{i}$.
The set $I_{p\widetilde{m}'}$ of cardinality $r+\omega(\mu)$ has
elements\[
\widetilde{i}_{x}=x+\sum_{y<x}p\widetilde{\alpha}'_{y},\quad x=1,2,\dots,r+\omega(\mu).\]
In particular\begin{equation}
\widetilde{i}_{x}=i_{\ell}\text{ when }x=\ell+\sum_{k<\ell}\alpha_{k},\label{eq:ixiell}\end{equation}
with similar formulas for $\widetilde{j}_{x}$ and $\widetilde{k}_{x}$.
We deduce that the attachment spaces of $p\widetilde{m}'$ are precisely
\[
\{\mathbb{E}_{i_{x}},\mathbb{F}_{j_{y}},\mathbb{G}_{k_{z}}:x\in{\rm att}_{I}(m'),y\in{\rm att}_{J}(m')z\in{\rm att}_{K}(m')\}.\]
Now, the measure $p\widetilde{m}'$ is homologous to $m'$, and therefore
Theorem \ref{thm:lattice-poly} implies that the space $\mathbb{X}$
in our statement belongs generically to the intersection\[
\mathfrak{S}(\mathcal{E},I_{p\widetilde{m}'})\cap\mathfrak{S}(\mathcal{F},J_{p\widetilde{m}'})\cap\mathfrak{S}(\mathcal{G},K_{p\widetilde{m}'}).\]
Relation (\ref{eq:ixiell}) implies that\[
\dim(\mathbb{X}\cap\mathbb{E}_{i_{\ell}})\ge\ell+\sum_{k<\ell}\alpha_{k},\quad\ell=1,2,\dots,r,\]
with similar estimates for $\dim(\mathbb{X}\cap\mathbb{F}_{j_{\ell}})$
and $\dim(\mathbb{X}\cap\mathbb{G}_{k_{\ell}})$. Thus, by intersecting
the spaces in the flags $\mathcal{E},\mathcal{F},\mathcal{G}$ with
$\mathbb{X}$ we obtain (after eliminating repeating spaces) flags
$\mathcal{E}',\mathcal{F}',\mathcal{G}'$ in $\mathbb{X}$ with the
property that\begin{equation}
\mathbb{E}_{x}^{\prime}\subset\mathbb{X}\cap\mathbb{E}_{i_{\ell}}\text{ for }x=\ell+\sum_{k<\ell}\alpha_{k},\label{eq:flag-incl}\end{equation}
and similarly for $\mathcal{F}'$ and $\mathcal{G}'$.

Note now that $\omega(m-pm')=n-p\omega(m')=\dim(\mathbb{X})$, and
therefore it makes sense to solve the Schubert problem associated
with this measure and the flags $\mathcal{E}',\mathcal{F}',\mathcal{G}'$.
To conclude the proof, let $\mathbb{M}$ be a space in the intersection\[
\mathfrak{S}(\mathcal{E}',I_{m-pm'})\cap\mathfrak{S}(\mathcal{F}',J_{m-pm'})\cap\mathfrak{S}(\mathcal{G}',K_{m-pm'}).\]
To see that $\mathbb{M}$ belongs to\[
\mathfrak{S}(\mathcal{E},I_{m})\cap\mathfrak{S}(\mathcal{F},J_{m})\cap\mathfrak{S}(\mathcal{G},K_{m}),\]
observe that the $\ell$th element of $I_{m-pm'}$ is equal to $i=\ell+\sum_{k<\ell}\alpha_{k}$,
so that\begin{eqnarray*}
\dim(\mathbb{M}\cap\mathbb{E}_{i_{\ell}}) & = & \dim(\mathbb{M}\cap(\mathbb{X}\cap\mathbb{E}_{i_{\ell}}))\\
 & \ge & \dim(\mathbb{M}\cap\mathbb{E}_{i}^{\prime})\ge\ell,\end{eqnarray*}
where we used (\ref{eq:flag-incl}) in the first inequality.
\end{proof}
When the measure $m$ is itself rigid, it was shown in \cite{bcdlt}
that it is possible to choose $m'$ so that $\Sigma_{m'}(m)=-p<0$,
and $m-pm'$ has strictly smaller support than $m$. Therefore repeated
applications of these reduction procedures  eventually yield an explicit
solution of the intersection problem.

As an illustration, we will see how to deduce the two kinds of reductions
mentioned in the introduction. First, consider a measure $m'$ with
$\omega(m')=1$. There are only three nonzero exit densities $\alpha'_{x}=\beta'_{y}=\gamma'_{z}=1$,
and we must have $x+y+z=r$; see the first triangle in the figure
below. The sum \[
\Sigma_{m'}(\mu)=i_{x}+j_{y}+k_{z}-n\]
corresponds to the original reductions in \cite{th-th}, and a reduction
can be applied when this sum is negative. The relevant lattice polynomial
is\[
P_{m'}(\mathbb{E},\mathbb{F},\mathbb{G})=\mathbb{E}+\mathbb{F}+\mathbb{G},\]
thus yielding the formula mentioned in the introduction. The second
reduction outlined in the introduction corresponds with a measure
$m'$ satisfying $\omega(m')=2$ whose support is shown in the second
triangle below.

\begin{center}
\includegraphics[scale=0.5]{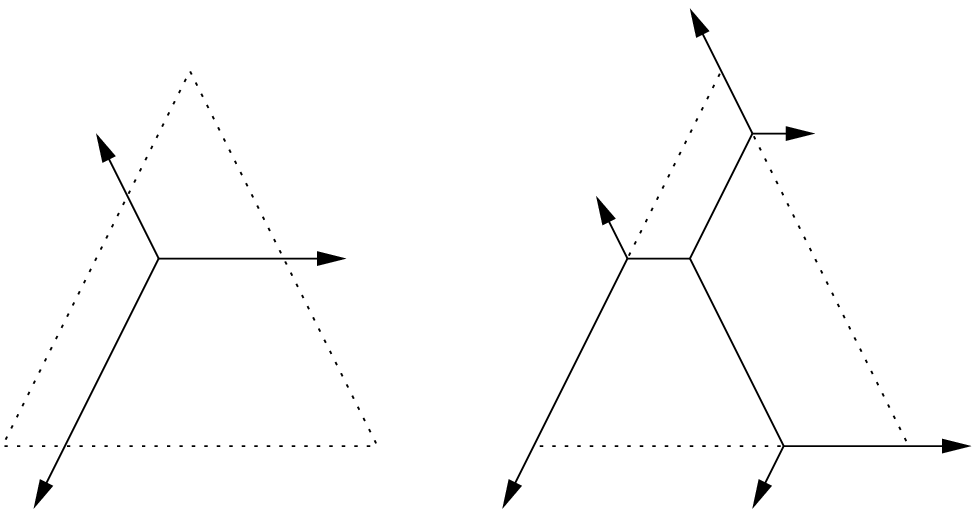}
\par\end{center}

\noindent There are now six exit densities equal to 1, but three of
them are $\alpha_{0}',\beta'_{0},\gamma'_{0}$, which do not correspond
to attachment points. The others are $\alpha'_{x},\beta'_{y},\gamma'_{z}$,
where the numbers $x,y,z$ are the lengths of the dotted segments
in the boundary of $\triangle_{r}$. Clearly $x+y+z=2r$, and\[
\Sigma_{m'}(\mu)=i_{x}+j_{y}+k_{z}-2n.\]
This time the lattice polynomial is\[
P_{m'}(\mathbb{E},\mathbb{F},\mathbb{G})=(\mathbb{E}\cap\mathbb{F})+(\mathbb{F}\cap\mathbb{G})+(\mathbb{G}\cap\mathbb{E}).\]

We conclude this section with an analysis of the measures in $\mathcal{M}_{3}$
which do not allow any of the reductions outlined above. There are
11 extremal measures in $\mathcal{M}_{3}$, and all of them are rigid
tree measures. Their supports are depicted below.

\begin{center}
\includegraphics[scale=0.6]{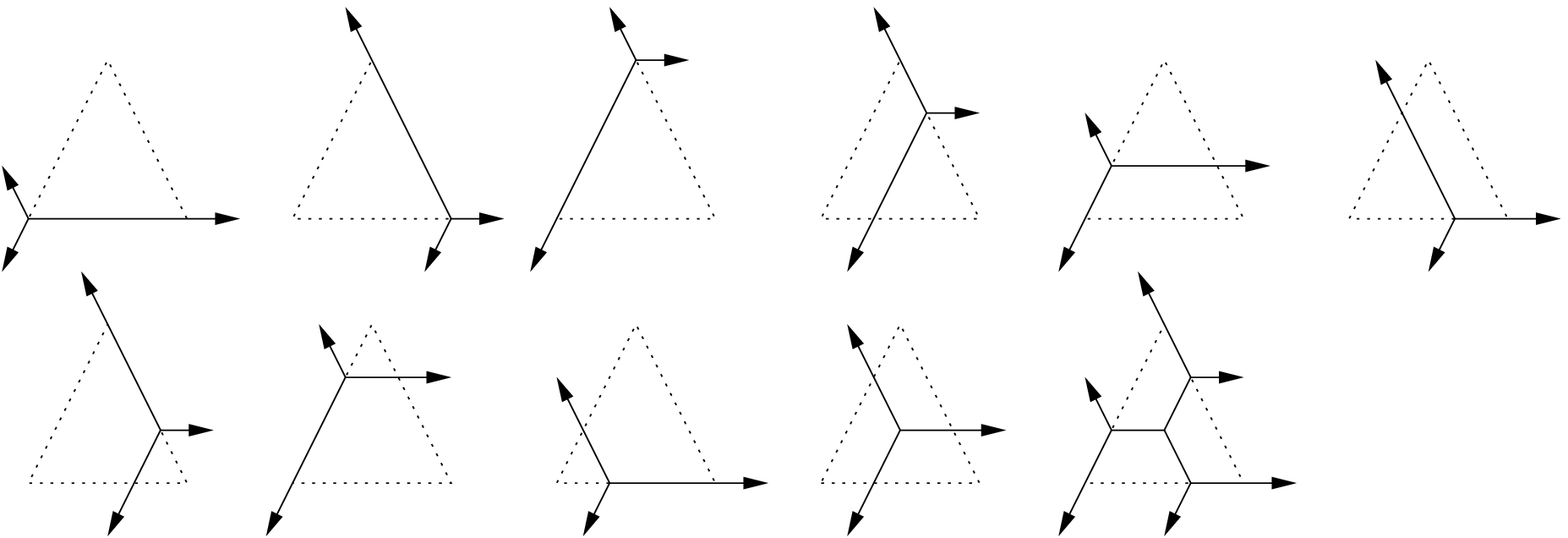}
\par\end{center}

\noindent Let us call these tree measures $\mu_{1},\mu_{2},\mu_{3},\nu_{1},\nu_{2},\nu_{3},\rho_{1},\rho_{2},\rho_{3},\tau_{1},$
and $\tau_{2}$. In addition to the equalities $\Sigma_{\mu}(\mu)=-1$,
the only other nonzero values for $\Sigma_{\mu}(\nu)$ with $\mu,\nu$
among these measures are equal to one. These are: $\Sigma_{\nu_{j}}(\mu_{j}),$
$\Sigma_{\rho_{j}}(\mu_{j})$, $\Sigma_{\tau_{1}}(\mu_{j})$ for $j=1,2,3$,
and the three cycles $\Sigma_{\nu_{j}}(\nu_{j+1})$, $\Sigma_{\rho_{j}}(\rho_{j-1})$,
and $\Sigma_{\tau_{j}}(\tau_{j+1})$. An arbitrary measure $m\in\mathcal{M}_{3}$
with integer densities can be written as\[
m=\sum_{j=1}^{3}(a_{j}\mu_{j}+b_{j}\nu_{j}+c_{j}\rho_{j})+d_{1}\tau_{1}+d_{2}\tau_{2},\]
where the coefficients $a_{j},b_{j},c_{j},d_{j}$ are nonnegative
integers. Note that $\Sigma_{\mu_{j}}(m)=-a_{j}$, $\Sigma_{\nu_{j}}(m)=a_{j}+b_{j+1}-b_{j}$,
$\Sigma_{\rho_{j}}(m)=a_{j}+c_{j-1}-c_{j}$, $\Sigma_{\tau_{2}}(m)=d_{1}-d_{2}$,
and $\Sigma_{\tau_{1}}(m)=a_{1}+a_{2}+a_{3}+d_{2}-d_{1}$. A reduction
is possible unless $a_{1}=a_{2}=a_{3}=0$, $b_{1}=b_{2}=b_{3}$, $c_{1}=c_{2}=c_{3}$,
and $d_{1}=d_{2}$. Observe also that $\tau_{1}+\tau_{2}=\nu_{1}+\nu_{2}+\nu_{3}$,
and this measure has the same exit densities as $\rho_{1}+\rho_{2}+\rho_{3}$.
Thus the only intersection problems which cannot be reduced with our
methods arise from measures of the form $d(\tau_{1}+\tau_{2})$ for
some integer $d>0$. The measure $m$ is rigid if and only if $b_{1}b_{2}b_{3}=c_{1}c_{2}c_{3}=d_{1}d_{2}=0.$
The first ten of the rigid tree measures above correspond with the
reductions considered in \cite{th-th} and \cite{CoDy-reduction}.
It should be noted that this analysis can be applied, via the duality
described in \cite{bcdlt}, to the analysis of measures $m$ with
$\omega(m)=3$. The intersection problems for such measures can be
reduced to duals of measures of the form $d(\tau_{1}+\tau_{2})$.

A similar analysis can be carried out for $r=4$ and $r=5$, but with
many more tree measures. Indeed, for $r\le5$ all extremal measures
in $\mathcal{M}_{r}$ are rigid. For $r=6$ there are already some
extremal measures which are not tree measures, though their exit densities
coincide with those of a sum of extremal rigid measures. An example
is provided below, where all solid edges have unit density. The two
resulting measures have the same exit densities, but only the first
one is extremal; the second one is the sum of three extremal measures.

\begin{center}
\includegraphics[scale=0.5]{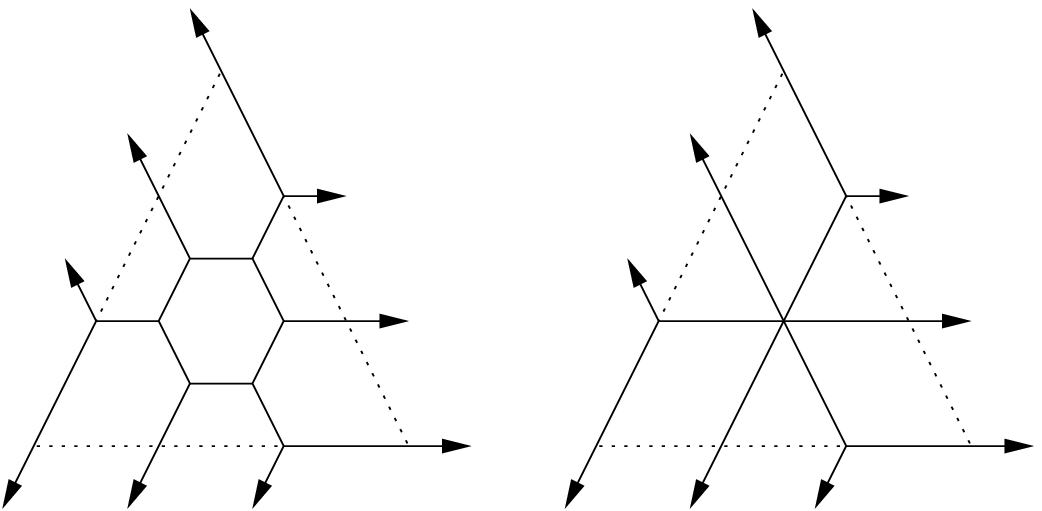}
\par\end{center}

For larger values of $r$, there exist tree extremal measures which
are not rigid, and  do not have the same exit densities as any sum
of extremal rigid measures. The support of such a tree measure $m$
is pictured below.

\begin{center}
\includegraphics[scale=0.5]{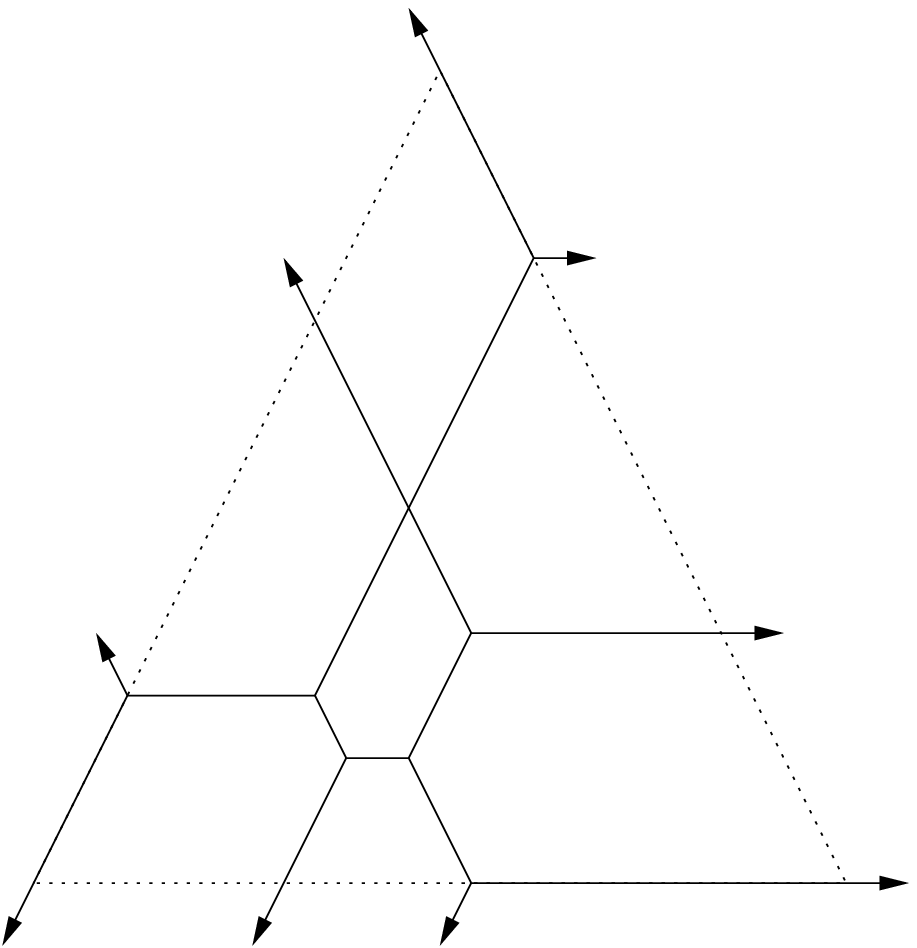}
\par\end{center}

\noindent Here $r=13$, and the exit points are $A_{0},A_{4},A_{10},B_{0},B_{4},B_{7},C_{0},C_{4},$
and $C_{10}$. It is easy to verify that one cannot find among these
points $A_{x},B_{y},C_{z}$  such that $x+y+z=13$, and therefore
the exit densities do not majorize the exit densities of any measure
$\mu$ with $\omega(\mu)=1$. Since $\omega(m)=3$, it follows that
the exit densities of $m$ do not majorize those of any rigid tree
measure.

\section{An Arboretum of Rigid Tree Measures}

The reduction procedure described in the preceding section requires
knowledge of the rigid tree measures in $\mathcal{M}_{r}$, and one
might hope that a complete description of these is available. We are
not aware of the existence of such a description, but we will use
Theorem \ref{thm:sigma(m,m)} to study those rigid measures which
have three nonzero exit densities on each side of $\triangle_{r}$.
Assume thus that the rigid tree measure $m\in\mathcal{M}_{r}$ has
weigt $\omega$, and nonzero densities $\alpha,\alpha',\alpha''$
in the NW direction, $\beta,\beta',\beta''$ in the SW direction,
and $\gamma,\gamma',\gamma''$ in the E direction. These integers
must satisfy\begin{equation}
\alpha+\alpha'+\alpha''=\beta+\beta'+\beta''=\gamma+\gamma'+\gamma''=\omega,\label{eq:weight-sec6}\end{equation}
and\begin{equation}
\alpha^{2}+\alpha^{\prime2}+\alpha^{\prime\prime2}+\beta^{2}+\beta^{\prime2}+\beta^{\prime\prime2}+\gamma^{2}+\gamma^{\prime2}+\gamma^{\prime\prime2}=\omega^{2}+2\label{eq:sum-of-squares}\end{equation}
by Theorem \ref{thm:sigma(m,m)}(3). Assume first that $\omega=3k+1$
for some integer $k\ge1$. The smallest value allowed by (\ref{eq:weight-sec6})
for the sum \[
\alpha^{2}+\alpha^{\prime2}+\alpha^{\prime\prime2}+\beta^{2}+\beta^{\prime2}+\beta^{\prime\prime2}+\gamma^{2}+\gamma^{\prime2}+\gamma^{\prime\prime2}\]
s achieved when the weights on each side are $k,k$ and $k+1$, and
that value is precisely $\omega^{2}+2$. Thus (\ref{eq:sum-of-squares})
implies that the weights on each side have precisely these values
(in some order). Similarly, when $\omega=3k+2$, the densities on
each side must be $k,k+1$ and $k+1$. When $\omega=3(k+1),$ relation
(\ref{eq:sum-of-squares}) implies\[
\alpha^{2}+\alpha^{\prime2}+\alpha^{\prime\prime2}+\beta^{2}+\beta^{\prime2}+\beta^{\prime\prime2}+\gamma^{2}+\gamma^{\prime2}+\gamma^{\prime\prime2}\ge9(k+1)^{2}=\omega^{2},\]
 with equality achieved only when all the exit densities are equal
to $k+1$. It follows easily from (\ref{eq:sum-of-squares}) that
on two sides the exit densities will all be equal to $k+1$, while
on the remaining side they must be $k,k+1,k+2$. We will now produce
actual examples of rigid tree measures with three nonzero exit densities
in each direction, and with all possible values of $\omega$. A first
series of examples is described in the following figure.

\begin{center}
\includegraphics[scale=0.8]{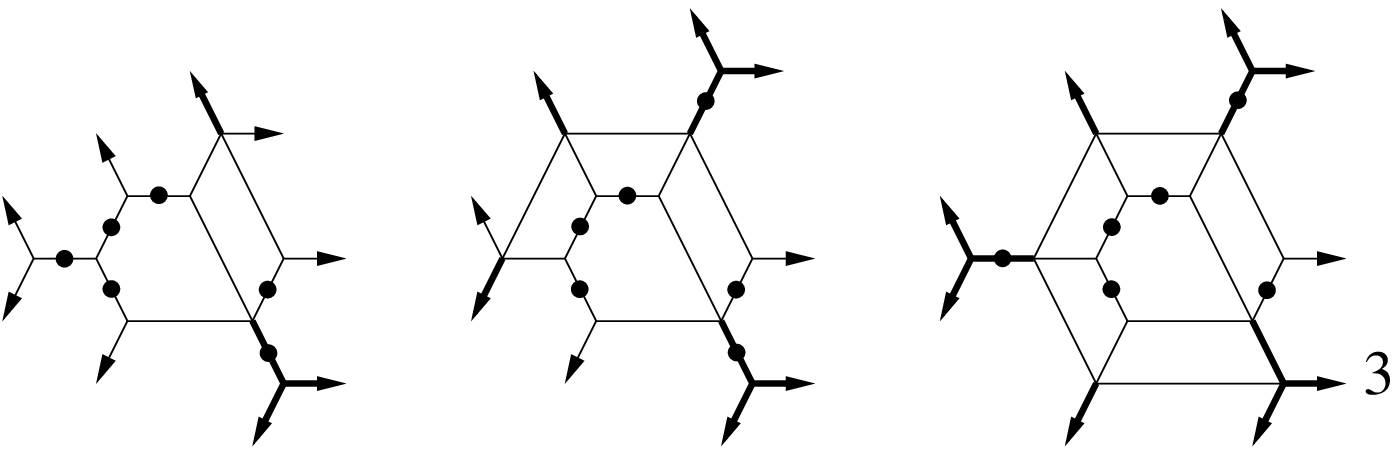}
\par\end{center}

\noindent The thinner edges have density one, and the thicker ones
have density two, except for one exit density which is equal to three,
as labeled. Other such measures can be obtained by applying $120^{\circ}$
rotations to these measures, or symmetries about a horizontal line.
Another way to obtain new meassures is to change the lengths of the
edges indicated by a dot. These lengths can be chosen arbitrarily;
here is an example of this procedure applied to the second measure
above.

\begin{center}
\includegraphics[scale=0.6]{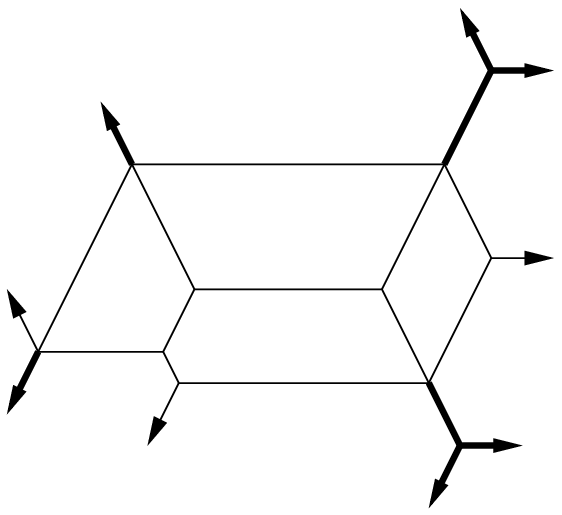}
\par\end{center}

The three measures above provide examples with $\omega=3k+1,3k+2$
and $3(k+1)$ when $k=1$. For larger values of $k$ one must continue
the spiral pattern. A second series of examples is illustrated below.

\begin{center}
\includegraphics[scale=0.8]{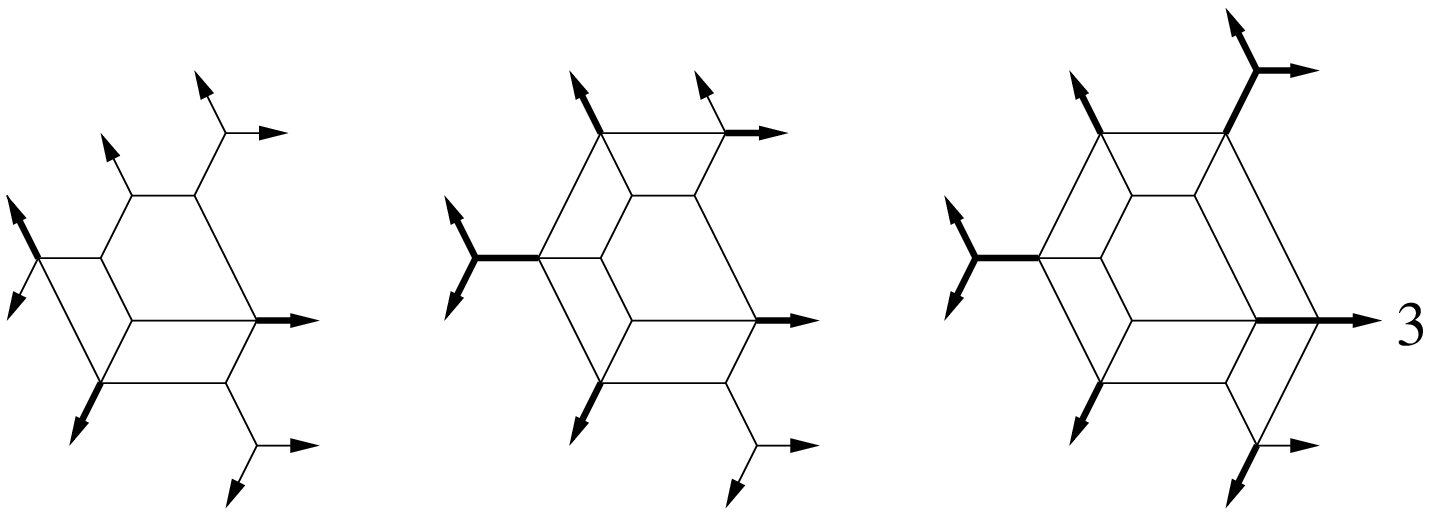}
\par\end{center}

\noindent As in the first series of examples, these measures can be
rotated by multiples of $120^{\circ}$, and reflected in a horizontal
line. Their shapes can also be changed by modifying arbitrarily the
lengths of six of the edges. Again, the spiral can be continued to
yield examples with weights $3k+1,3k+2$ and $3(k+1)$ for all intergers
$k\ge1$.

A third series of examples is illustrated next.

\begin{center}
\includegraphics[scale=0.6]{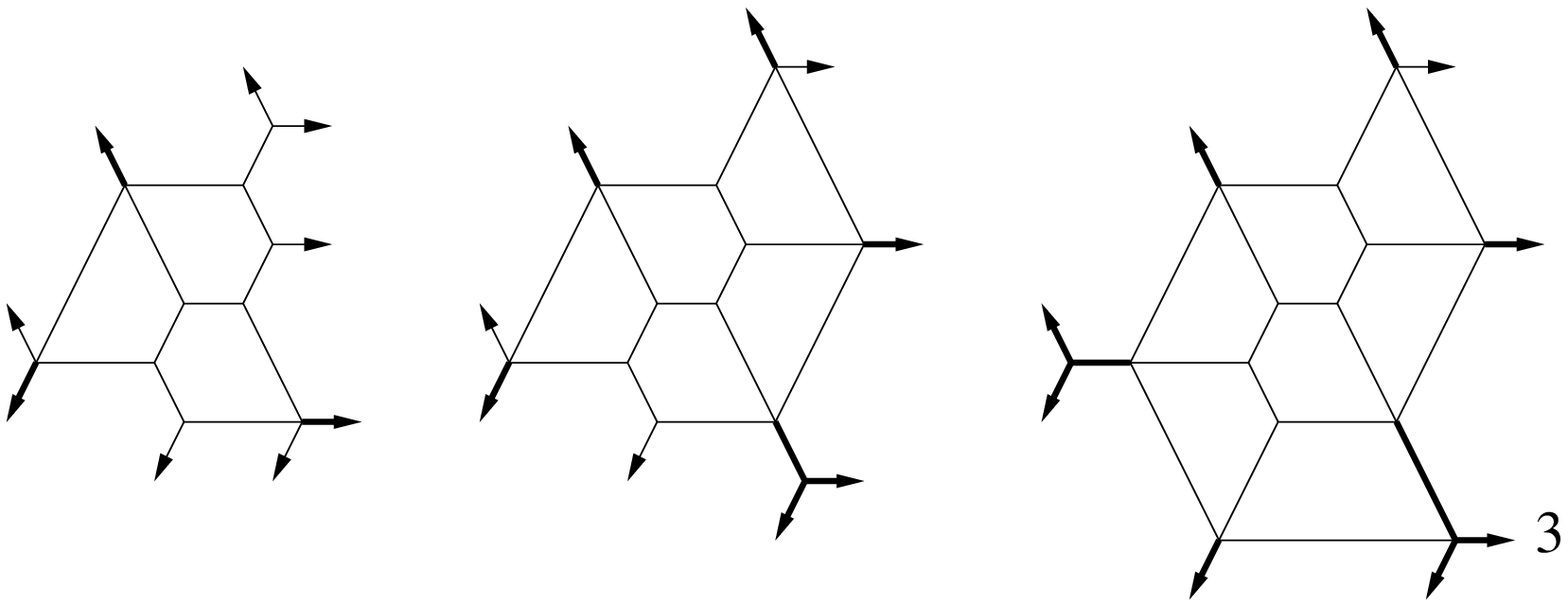}
\par\end{center}

\noindent Note that this series has two spiral arms. To obtain measures
with higher weight one proceeds by alternately increasing each spiral
by $1/3$ of a complete turn.

When $\omega(m)=3k+1$ there is one more series of measures which
have greater symmetry. The first two in the series are pictured below.

\begin{center}
\includegraphics[scale=0.6]{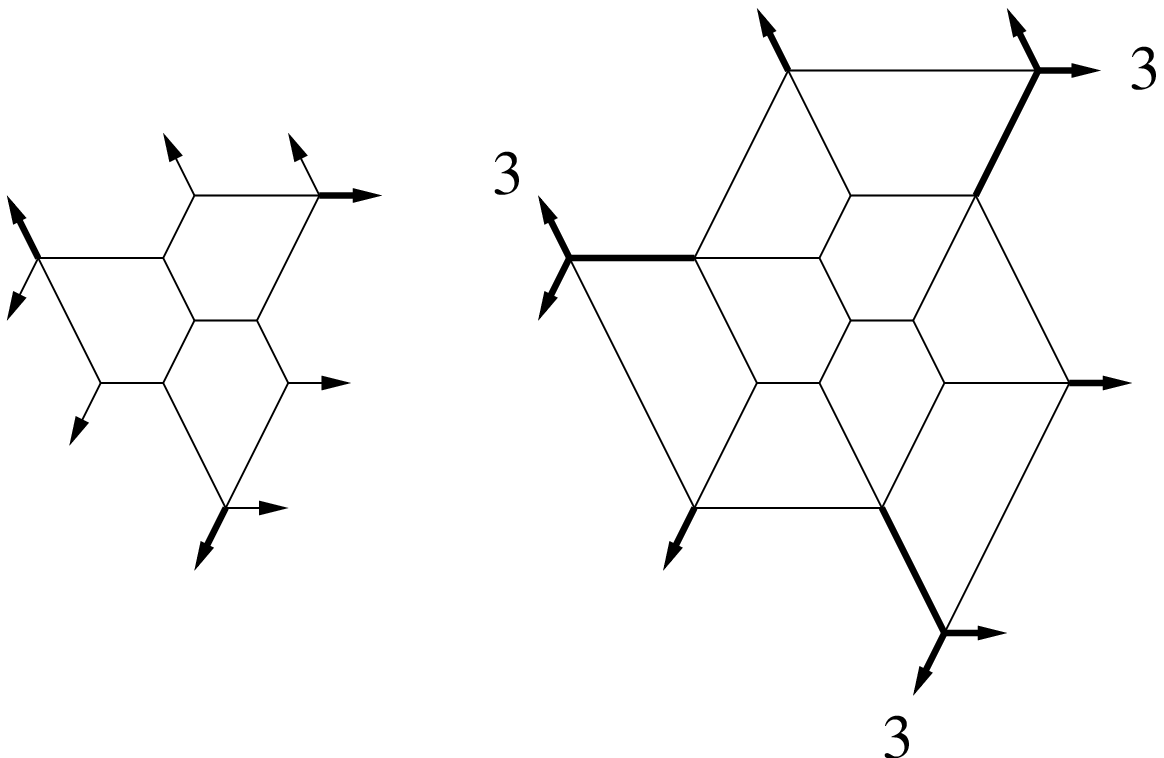}
\par\end{center}

\noindent These measures are invariant under $120^{\circ}$ rotations,
but not under reflection relative to a horizontal line.

A similar series is available for $\omega=3k+2$.

\begin{center}
\includegraphics[scale=0.6]{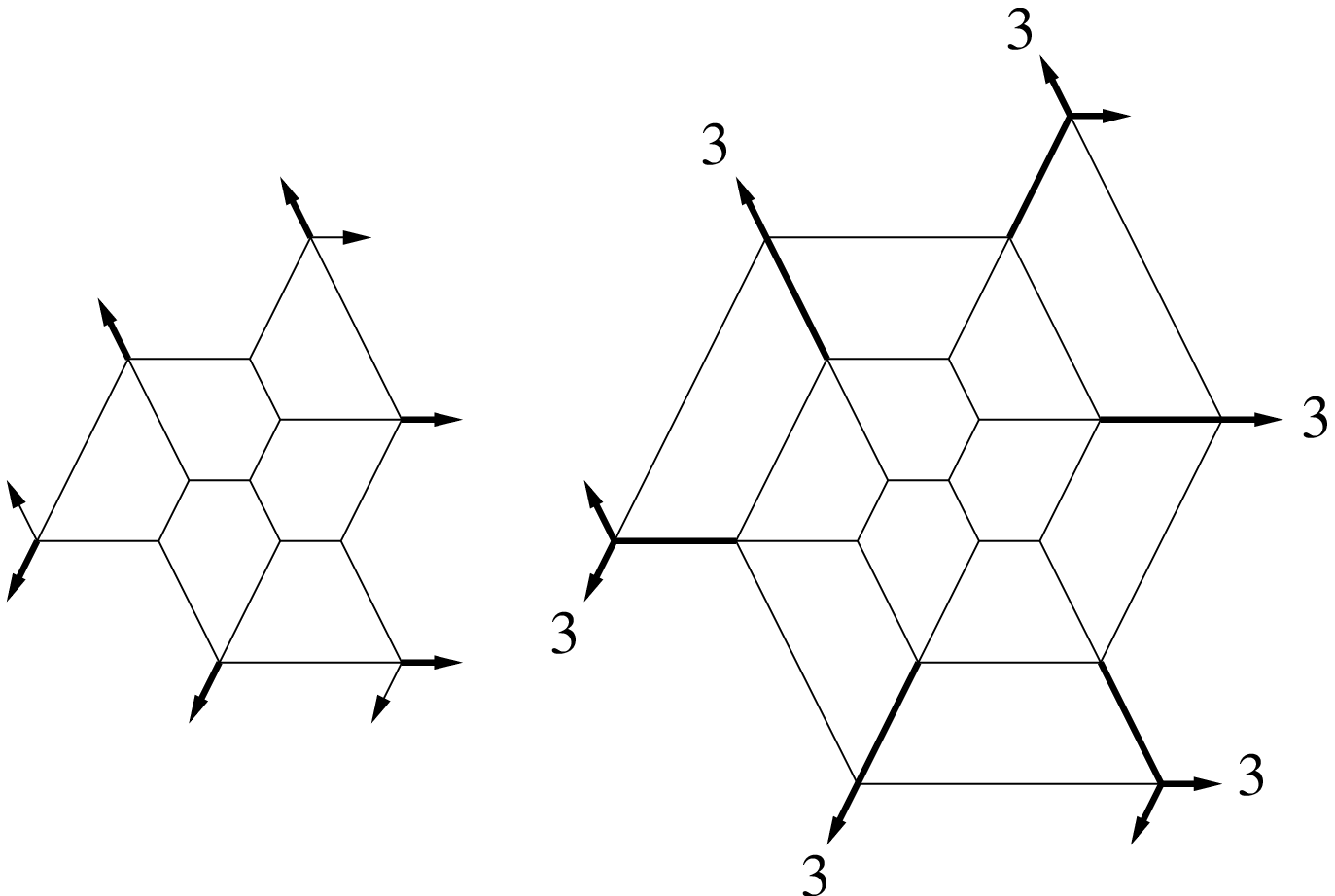}
\par\end{center}

Some of these examples have versions for $k=0$, though in that case
there will be fewer than three nonzero exit densities in some direction.
Using duality of measures, it can be shown that the measures described
above (along with their rotations, reflections and stretched versions)
are the only measures with exactly three nonzero exit densities in
each direction. Thus, for instance, there are no rigid tree measures
whose exit densities are (in counterclockwise order, starting with
$\alpha$) $(k+1,k+1,k),$ $(k+1,k,k+1)$ and $(k,k+1,k+1)$ or $(k+1,k,k)$,
$(k,k,k+1)$ and $(k,k+1,k)$.


\begin{thebibliography}{6}
\bibitem{bcdlt}H. Bercovici, B. Collins, K. Dykema, W. S. Li, and
D. Timotin, Intersections of Schubert varieties and eigenvalue inequalities
in an arbitrary finite factor, arXiv:0805.4817

\bibitem{buch}A. S. Buch, The saturation conjecture (after A. Knutson
and T. Tao). With an appendix by William Fulton. \emph{Enseign. Math.
(2)} \textbf{46} (2000), no. 1-2, 43--60.

\bibitem{CoDy-reduction}B. Collins and K. Dykema, On a reduction
procedure for Horn inequalities in finite von Neumann algebras, arXiv:0711.3930.

\bibitem{fulton}W. Fulton, \emph{Young Tableaux}, Cambridge University
Press, Cambridge, 1997.

\bibitem{KTW}A. Knutson, T. Tao, and C. Woodward, The honeycomb model
of GL$_{n}(\mathbb{C})$ tensor products. II. Puzzles determine facets
of the Littlewood-Richardson cone, \emph{J. Amer. Math. Soc.} \textbf{17}
(2004),  19--48. 

\bibitem{th-th}R. C. Thompson and S. Therianos, On a construction
of B. P. Zwahlen. \emph{Linear and Multilinear Algebra} \textbf{1}
(1973/74), 309--325.
\end{thebibliography}
\end{document}